\newcommand{\kom}[1]{}
\renewcommand{\kom}[1]{{\bf [#1]}}
 \def\1{\raisebox{2pt}{\rm{$\chi$}}}
\def\a{{\bf a}}
\newtheorem{theorem}{Theorem}[section]
\newtheorem{lemma}[theorem]{Lemma}
\newtheorem{definition}[theorem]{Definition}
\newtheorem{remark}[theorem]{Remark}
\newtheorem{example}[theorem]{Example}
\numberwithin{equation}{section}
\newcommand{\R}{{\mathbb R}}
\newcommand{\N}{{\mathbb N}}
 \newcommand{\eps}{{\varepsilon}}
 \def\1{\raisebox{2pt}{\rm{$\chi$}}}
\newcommand{\abs}[1]{\left|#1\right|}
\newcommand{\norm}[1]{\left|\left|#1\right|\right|}
\newcommand{\Rn}{\mathbb{R}^n}
\newcommand{\osc}{\operatorname{osc}}
\def\vint_#1{\mathchoice%
          {\mathop{\kern 0.2em\vrule width 0.6em height 0.69678ex depth -0.58065ex
                  \kern -0.8em \intop}\nolimits_{\kern -0.4em#1}}%
          {\mathop{\kern 0.1em\vrule width 0.5em height 0.69678ex depth -0.60387ex
                  \kern -0.6em \intop}\nolimits_{#1}}%
          {\mathop{\kern 0.1em\vrule width 0.5em height 0.69678ex
              depth -0.60387ex
                  \kern -0.6em \intop}\nolimits_{#1}}%
          {\mathop{\kern 0.1em\vrule width 0.5em height 0.69678ex depth -0.60387ex
                  \kern -0.6em \intop}\nolimits_{#1}}}
\def\vintslides_#1{\mathchoice%
          {\mathop{\kern 0.1em\vrule width 0.5em height 0.697ex depth -0.581ex
                  \kern -0.6em \intop}\nolimits_{\kern -0.4em#1}}%
          {\mathop{\kern 0.1em\vrule width 0.3em height 0.697ex depth -0.604ex
                  \kern -0.4em \intop}\nolimits_{#1}}%
          {\mathop{\kern 0.1em\vrule width 0.3em height 0.697ex depth -0.604ex
                  \kern -0.4em \intop}\nolimits_{#1}}%
          {\mathop{\kern 0.1em\vrule width 0.3em height 0.697ex depth -0.604ex
                  \kern -0.4em \intop}\nolimits_{#1}}}
\newcommand{\aveint}[2]{\mathchoice%
          {\mathop{\kern 0.2em\vrule width 0.6em height 0.69678ex depth -0.58065ex
                  \kern -0.8em \intop}\nolimits_{\kern -0.45em#1}^{#2}}%
          {\mathop{\kern 0.1em\vrule width 0.5em height 0.69678ex depth -0.60387ex
                  \kern -0.6em \intop}\nolimits_{#1}^{#2}}%
          {\mathop{\kern 0.1em\vrule width 0.5em height 0.69678ex depth -0.60387ex
                  \kern -0.6em \intop}\nolimits_{#1}^{#2}}%
          {\mathop{\kern 0.1em\vrule width 0.5em height 0.69678ex depth -0.60387ex
                  \kern -0.6em \intop}\nolimits_{#1}^{#2}}}
\newcommand{\half}{{\frac{1}{2}}}
\newcommand{\ol}{\overline}
\newcommand{\Om}{\Omega}
\newcommand{\dist}{\operatorname{dist}}
\newcommand{\diam}{\operatorname{diam}}
\newcommand{\tr}{\operatorname{tr}}
\renewcommand{\a}{\alpha}
\begin{document}

\title[Regularity for normalized $p$-Laplacian]{ $C^{1,\a}$ regularity for the normalized $p$-Poisson problem}

\author[Attouchi]{Amal Attouchi}
\address{Department of Mathematics and Statistics, University of
Jyv\"askyl\"a, PO~Box~35, FI-40014 Jyv\"askyl\"a, Finland}
\email{amalattouchi@gmail.com}

\author[Parviainen]{Mikko Parviainen}
\email{mikko.j.parviainen@jyu.fi}

\author[Ruosteenoja]{Eero Ruosteenoja}
\email{eero.ruosteenoja@jyu.fi}

\date{\today}
\keywords{Normalized $p$-Laplacian, $p$-Poisson problem, viscosity solutions, local $C^{1,\alpha}$ regularity} 
\subjclass[2010]{35J60, 35B65, 35J92}

\begin{abstract}
We consider the normalized $p$-Poisson problem
$$-\Delta^N_p u=f \qquad \text{in}\quad \Om.$$
The normalized $p$-Laplacian $\Delta_p^{N}u:=|D u|^{2-p}\Delta_p u$ is in non-divergence form and arises for example from stochastic games. We prove $C^{1,\alpha}_{\text{\text{loc}}}$ regularity with nearly optimal $\alpha$ for viscosity solutions of this problem. In the case $f\in L^{\infty}\cap C$ and $p>1$ we use methods both from viscosity and weak theory, whereas in the case $f\in L^q\cap C$, $q>\max(n,\frac p2,2)$, and $p>2$ we rely on the tools of nonlinear potential theory.   
\end{abstract}
\maketitle

\section{Introduction}
\label{sec:intro}

In this paper we study local regularity properties of the inhomogeneous normalized $p$-Laplace equation
\begin{equation}
\label{normpl}
-\Delta_p^{N}u=f \qquad \text{in}\quad \Om\subset \R^n.
\end{equation}
The normalized $p$-Laplacian is defined as 
\[
\Delta_p^{N}u:=|D u|^{2-p}\Delta_p u=\Delta u +(p-2)\Delta_{\infty}^N u,
\]
where $\Delta_{\infty}^N u:=\langle D^2u\frac{D u}{\abs{D u}}, \frac{D u}{\abs{D u}}\rangle$ denotes the normalized infinity Laplacian. The motivation to study these types of normalized operators stems partially from their connections to stochastic games and their applications to image processing. The normalized $p$-Laplacian is gradient dependent and discontinuous, so we cannot directly rely on the existing general $C^{1,\alpha}$ regularity theory of viscosity solutions. Only H\"older continuity for solutions of \eqref{normpl} follows from the regularity theory for uniformly elliptic equations, see \cite{caffarelli89,caffarellicabrebook}.

Our aim is to show local H\"older continuity for gradients of viscosity solutions of \eqref{normpl} by relying on different methods depending on regularity assumptions of the source term $f$. Assuming first that $f\in L^{\infty}(\Omega)\cap C(\Omega)$, we show that solutions of \eqref{normpl} for $p>1$ are of class $C^{1,\alpha}_{\text{loc}}$ for some  $\alpha>0$ depending on $p$ and the dimension $n$. 

\begin{theorem}\label{thm:main1}
Assume that $p>1$ and $f\in L^{\infty}(\Omega)\cap C(\Omega)$.  There exists $\alpha=\alpha(p,n)>0$ such that any viscosity solution $u$ of
  \eqref{normpl} is in $C^{1,\alpha}_{\text{\emph{loc}}}(\Omega)$, and for any $\Omega' \subset \subset \Omega$,
$$
[u]_{C^{1,\alpha}(\Omega')} \le C=C \left(p,n,d,d',||u||_{L^\infty(\Omega)},\norm{f}_{L^\infty(\Omega)} \right),
$$
where $d=\diam(\Om)$ and $d'=\dist(\Om', \partial\Om)$.
\end{theorem}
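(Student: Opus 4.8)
\emph{Reductions.} The plan is to combine an Ishii--Lions a priori Lipschitz estimate with a Caffarelli-type improvement-of-flatness iteration, feeding in the weak (variational) regularity theory of the $p$-Laplacian to handle the homogeneous limit equation. After a translation we may assume $B_2\subset\Om$ and that it suffices to bound $[u]_{C^{1,\alpha}(B_{1/2})}$, the general estimate over $\Omega'\subset\subset\Om$ then following by a covering argument with the natural scaling. Write \eqref{normpl} in non-divergence form $F(Du,D^2u)=f$ with $F(q,X)=-\tr(A(q)X)$ and $A(q)=I+(p-2)\tfrac{q\otimes q}{\abs q^2}$; the eigenvalues of $A(q)$ are $p-1$ and $1$, so $F$ is uniformly elliptic with constants depending only on $p$, while being discontinuous on $\{q=0\}$ and genuinely gradient dependent --- which is exactly why the classical $C^{1,\alpha}$ theory of uniformly elliptic viscosity equations cannot be invoked directly. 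Since $A$ is $0$-homogeneous, $\Delta^N_p$ is $1$-homogeneous and scales quadratically: for $B_r(z)\subset B_2$ the function $v(x)=u(z+rx)/K$ solves $-\Delta^N_p v=\tilde f$ in $B_1$ with $\norm{\tilde f}_{L^\infty(B_1)}=\tfrac{r^2}{K}\norm f_{L^\infty(B_r(z))}$ and $\norm v_{L^\infty(B_1)}=\norm u_{L^\infty(B_r(z))}/K$. Choosing $K$ and $r$ suitably, it therefore suffices to prove a universal interior $C^{1,\alpha}$ estimate for solutions in $B_1$ with $\norm u_{L^\infty(B_1)}\le1$ and $\norm f_{L^\infty(B_1)}\le\varepsilon_0$, for a small $\varepsilon_0=\varepsilon_0(p,n)$ to be fixed; keeping track of $K$ and $r$ at the end restores the stated dependence of the constant on $\norm u_{L^\infty(\Om)}$, $\norm f_{L^\infty(\Om)}$, $d$ and $d'$. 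From uniform ellipticity and the Krylov--Safonov / Caffarelli--Cabr\'e theory, such a $u$ is locally $C^{0,\gamma}$ with a universal estimate; this is the baseline on which everything else is built.

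\emph{Lipschitz estimate.} Next I would upgrade this to $\norm{Du}_{L^\infty(B_{1/2})}\le L_0$ (universal) by the Ishii--Lions doubling-of-variables method: for $y_0\in B_{1/2}$ study
\[
\Phi(x,y)=u(x)-u(y)-L\,\omega(\abs{x-y})-\tfrac M2\big(\abs{x-y_0}^2+\abs{y-y_0}^2\big)
\]
on $\overline B_{3/4}\times\overline B_{3/4}$, where $\omega$ is a fixed strictly concave function with $\omega(0)=0$ and $\omega'\approx1$ near the origin. If $\Phi$ were positive somewhere for every large $L$, its maximum point $(\bar x,\bar y)$ would have $\bar x\ne\bar y$ and, for $M$ large, would lie in the interior; the Crandall--Ishii lemma yields matrices with the usual ordering, and substituting them into the sub- and supersolution inequalities, together with the strict concavity of $\omega$ and uniform ellipticity, produces a term of size $\lesssim-cL$ contradicting the bound coming from $\norm f_{L^\infty}$ once $L$ is large. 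The discontinuity of $F$ is harmless here because all test gradients have size comparable to $L$, far from $\{q=0\}$, so $F$ is smooth where it is evaluated, and the $q$-dependence of $A$ enters only as a lower-order perturbation since $A$ is Lipschitz off the origin and $L\gg M$.

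\emph{$C^{1,\alpha}$ by iteration.} With the Lipschitz bound I would run a Caffarelli-type improvement-of-flatness iteration about a fixed base point (say $0$), dichotomizing at each dyadic scale. In the \emph{non-degenerate} regime --- the slope of $u$ at that scale bounded below relative to the oscillation --- the coefficient $A(Du)$ is essentially frozen and continuous, so the equation is a small perturbation of a constant-coefficient uniformly elliptic one and the classical Caffarelli--Cabr\'e interior $C^{1,\bar\alpha}$ estimate gives a one-step geometric improvement of the affine approximation. In the \emph{degenerate} regime --- small slope --- a compactness argument shows that, after rescaling, $u$ is $\delta$-close in $L^\infty$ to a solution $h$ of the \emph{homogeneous} equation $\Delta^N_p h=0$: one uses the uniform $C^{0,1}$ (or even just $C^{0,\gamma}$) bound with Arzel\`a--Ascoli for precompactness, stability of viscosity solutions of this singular equation --- passed through the upper and lower semicontinuous envelopes $F^*,F_*$, in the by-now standard way for this operator --- and the known interior $C^{1,\bar\alpha}$ regularity of solutions of $\Delta^N_p h=0$ (available, e.g., via the equivalence of its viscosity solutions with $p$-harmonic functions and the classical regularity theory for the $p$-Laplacian) to obtain the geometric improvement again. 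Fixing the scale ratio $\rho$ and then $\varepsilon_0$ small enough to absorb the approximation error in both regimes, one builds inductively affine maps $\ell_k(x)=a_k+b_k\cdot x$ with
\[
\norm{u-\ell_k}_{L^\infty(B_{\rho^k})}\le\rho^{k(1+\alpha)},\qquad \abs{a_{k+1}-a_k}+\rho^k\abs{b_{k+1}-b_k}\lesssim\rho^{k(1+\alpha)},
\]
for a fixed $\alpha\in(0,\bar\alpha)$; summing the geometric series gives $b_k\to Du(0)$ with $\abs{Du(0)}\le C$ and $\abs{u(x)-u(0)-Du(0)\cdot x}\le C\abs x^{1+\alpha}$ near $0$. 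Running this at every base point in $B_{1/2}$ and undoing the normalization yields the claim (the uniform pointwise affine approximations give $Du\in C^{0,\alpha}(B_{1/2})$ with the quantitative bound).

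\emph{Main obstacle.} The crux is the interplay between the discontinuity of $F$ on $\{q=0\}$ and the degenerate regime of the iteration: one must understand how $u$ is approximated, at scales where $Du$ nearly vanishes, by solutions of the homogeneous normalized $p$-Laplace equation, and it is here that viscosity and weak arguments must be married --- the regularity of the homogeneous limit is most naturally obtained from the variational theory, while compactness, stability and the Lipschitz bound are viscosity-theoretic, and reconciling the two (notably, that where $Du\ne0$ a viscosity solution of $-\Delta^N_p u=f$ is a weak solution of the corresponding divergence-form $p$-Poisson equation) requires care. This is more delicate for $1<p<2$, where the divergence-form right-hand side $\abs{Du}^{p-2}f$ is singular at critical points, so for small $p$ one must lean more heavily on the viscosity-theoretic compactness scheme.
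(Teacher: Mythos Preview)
Your overall strategy --- reduce by scaling, prove improvement of flatness by compactness against the homogeneous equation, iterate --- matches the paper's first proof (Section~3.1), and you correctly identify that the regularity of the homogeneous limit must come from the variational $p$-Laplace theory via the viscosity/weak equivalence. The organization differs, though, and your non-degenerate branch has a gap.

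The paper does not first prove $u$ Lipschitz; the H\"older estimate from the Pucci extremal inequalities (their Lemma~3.1, your Krylov--Safonov baseline) already gives all the compactness needed. Rather than dichotomize at each scale, the paper subtracts an arbitrary plane, so $w=u-q\cdot x$ solves the shifted equation $-\tr\bigl(A(Dw+q)\,D^2w\bigr)=f$, and proves one improvement-of-flatness lemma \emph{uniform in $q$} (Lemma~3.3). Your large/small-slope split is absorbed into the compactness proof of that lemma: along a contradicting sequence $(w_j,q_j,f_j)$ with $\norm{f_j}_\infty\to0$, either $(q_j)$ is bounded and the limit solves the homogeneous shifted equation --- whose uniform-in-$q$ $C^{1,\beta}$ regularity is a separate lemma (Lemma~3.2, proved via Ishii--Lions for Lipschitz when $|q|$ is large, then differentiation and De~Giorgi on the derivatives; and via $p$-harmonic regularity when $|q|$ is bounded) --- or $|q_j|\to\infty$ and the limit solves the linear constant-coefficient equation $-\Delta w_\infty-(p-2)\langle D^2w_\infty\,e_\infty,e_\infty\rangle=0$, to which Caffarelli--Cabr\'e applies directly. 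So the Ishii--Lions argument in the paper lives inside the analysis of the homogeneous limit, not as a preliminary bound on $u$.

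Your non-degenerate step --- ``$A(Du)$ is essentially frozen, apply Caffarelli--Cabr\'e $C^{1,\bar\alpha}$'' --- is circular as stated: freezing $A(Du)$ to within small oscillation requires a modulus of continuity for $Du$, which is precisely the conclusion, and your Step~2 Lipschitz bound on $u$ does not transfer to the rescaled deviation since $|Dw_k(x)|=|Du(\rho^kx)-q_k|/\rho^{k\alpha}$ is not controlled by $\norm{Du}_\infty$ alone. The paper sidesteps this by passing to the limit \emph{before} freezing: the direction $(Dw+q)/|Dw+q|$ becomes constant only after sending $|q_j|\to\infty$, and there the equation is genuinely linear. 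If you want to keep a per-scale split, you would need the analogue of the paper's Lemma~3.4 (uniform Lipschitz for the homogeneous $w$ when the shift is large) before any coefficient-freezing argument can be justified.
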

The idea is to first show an improvement of flatness for a slightly modified version of equation \eqref{normpl}, and then proceed by iteration. Earlier, in the restricted case $p\geq 2$, a $C^2$ domain $\Om$ and $f\in C(\overline{\Om})$, Birindelli and Demengel \cite[Proposition 3.5]{birdemen2} proved global H\"older continuity for the gradient of viscosity solutions of \eqref{normpl} by studying eigenvalue problems related to the $p$-Laplacian. In the case $p\geq 2$ we provide an alternative proof by showing first that viscosity solutions of \eqref{normpl} are weak solutions of
\begin{equation}\label{usualpl}
-\Delta_p u=|D u|^{p-2} f\qquad\text{in}\quad\Omega,
\end{equation}
and then relying on the known regularity results for quasilinear PDEs to see that weak solutions of \eqref{usualpl} are locally of class $C^{1,\alpha}$. 

Restricting to the case $p>2$, we can relax the estimate of Theorem \ref{thm:main1} by providing a control on the H\"older estimate of the gradient that depends on a weaker norm of $f$.

\begin{theorem}\label{thm:main2} Assume that $p>2$, $q>\max(2,n, p/2)$, $f\in C(\Om)\cap L^q(\Om)$. Then any  viscosity solution $u$ of \eqref{normpl}  is in $C^{1,\alpha}_{\text{\emph{loc}}}(\Omega)$ for some
$\alpha =\alpha(p, q,n)$. Moreover, for any $\Om''\subset \subset\Omega' \subset \subset \Omega$, with $\Om'$ smooth enough, we have
\[
[u]_{C^{1,\alpha}(\Omega'')}\leq C=C\left(p,q,n,d,d'',||u||_{L^\infty(\Om)},\norm{f}_{L^q(\Om)}\right),
\]
where $d=\text{\emph{diam}}\, (\Om)$ and $d''=\text{\emph{dist}}\, (\Om'',\partial \Om')$.
\end{theorem}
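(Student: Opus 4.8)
The plan is to transfer the equation into divergence form and then apply gradient bounds from nonlinear potential theory; the \emph{qualitative} regularity is already available. Since $f\in C(\Omega)$ is bounded on every compact subset of $\Omega$, Theorem~\ref{thm:main1}, applied on subdomains, gives $u\in C^{1,\alpha_0}_{\mathrm{loc}}(\Omega)$ for some $\alpha_0>0$; in particular $u\in W^{1,p}_{\mathrm{loc}}(\Omega)$, and by the equivalence between viscosity solutions of $-\Delta^N_p w=h$ and weak solutions of $-\Delta_p w=|Dw|^{p-2}h$ (valid for $p\ge 2$ and locally bounded continuous $h$), $u$ is a weak solution of
\[
-\Delta_p u=g\qquad\text{in }\Omega,\qquad g:=|Du|^{p-2}f.
\]
The content of Theorem~\ref{thm:main2} is then to \emph{re-derive} the $C^{1,\alpha}$ bound for $u$ with constants depending only on $\|f\|_{L^q(\Omega)}$ rather than on $\|f\|_{L^\infty}$ (the latter being Theorem~\ref{thm:main1}). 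What makes this possible is that for $p>2$ the weight $|Du|^{p-2}$ is bounded as soon as $Du$ is locally bounded, so $g$ inherits the $L^q$-integrability of $f$; the difficulty is that $g$ depends on $Du$ itself, which forces the gradient bound to be closed by absorption, and the hypotheses $q>n$ and $q>p/2$ are exactly what makes this work.

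Fix $\Omega''\subset\subset\Omega'\subset\subset\Omega$ with $\Omega'$ smooth. \textbf{H\"older estimate.} The normalized $p$-Laplacian is uniformly elliptic, with ellipticity constants $\min(1,p-1)$ and $\max(1,p-1)$, so by the Krylov--Safonov/Caffarelli estimate for non-divergence equations with right-hand side in $L^q$, $q>n$ (which requires no continuity of the coefficients), $u\in C^{0,\beta}_{\mathrm{loc}}(\Omega)$ for some $\beta=\beta(n,p,q)>0$, with $\osc_{B_{2r}(x_0)}u\le Cr^\beta$ and $C=C(n,p,q,d,\dist(\Omega',\partial\Omega),\|u\|_{L^\infty(\Omega)},\|f\|_{L^q(\Omega)})$ for balls $B_{2r}(x_0)\subset\subset\Omega'$. \textbf{Energy estimate.} Testing $-\Delta_p u=g$ with $\varphi^p(u-u_{B_{2r}})$ and splitting the lower-order term $\int|Du|^{p-2}|f|\varphi^p|u-u_{B_{2r}}|$ by Young's inequality with exponents $\tfrac{p}{p-2}$ and $\tfrac{p}{2}$ (this is where $q>p/2$, i.e.\ $f\in L^{p/2}_{\mathrm{loc}}$, enters) together with the oscillation bound gives $\int_{B_r(x_0)}|Du|^p\le C(n,p,q,\|u\|_{L^\infty(\Omega)},\|f\|_{L^q(\Omega)},d,r)$ for $B_{2r}(x_0)\subset\subset\Omega'$. \textbf{Lipschitz estimate.} By the pointwise gradient potential estimate for $p$-Laplace type equations (Kilpel\"ainen--Mal\'y, Duzaar--Mingione, Kuusi--Mingione), for $B_{2r}(x_0)\subset\subset\Omega'$,
\[
|Du(x_0)|\le C\,\vint_{B_r(x_0)}|Du|\,dx+C\Big(\int_0^{2r}\Big(\tfrac{1}{t^{n-1}}\int_{B_t(x_0)}|g|\,dx\Big)^{1/(p-1)}\tfrac{dt}{t}\Big);
\]
bounding $\int_{B_t(x_0)}|g|\le\|Du\|_{L^\infty(B_{2r}(x_0))}^{p-2}\|f\|_{L^q(\Omega)}|B_t|^{1-1/q}$ and using $q>n$ to see that the $t$-integral converges, this becomes
\[
|Du(x_0)|\le C\,\vint_{B_r(x_0)}|Du|\,dx+C\,\|Du\|_{L^\infty(B_{2r}(x_0))}^{\frac{p-2}{p-1}}\|f\|_{L^q(\Omega)}^{\frac{1}{p-1}}\,r^{\theta},\qquad\theta=\tfrac{q-n}{q(p-1)}>0.
\]
Since $\tfrac{p-2}{p-1}<1$, Young's inequality absorbs the $\|Du\|_{L^\infty}$-term; feeding the energy estimate into the average and iterating over radii (Giaquinta's iteration lemma) yields $\|Du\|_{L^\infty(\Omega'')}\le C(n,p,q,\|u\|_{L^\infty(\Omega)},\|f\|_{L^q(\Omega)},d,d'')$.

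\textbf{From Lipschitz to $C^{1,\alpha}$.} With $M:=\|Du\|_{L^\infty(\Omega'')}$ now under control, $g\in L^q(\Omega'')$ with $\|g\|_{L^q(\Omega'')}\le M^{p-2}\|f\|_{L^q(\Omega)}$, so the interior $C^{1,\alpha}$ theory for quasilinear equations $-\Delta_p u=g$ with $g\in L^q$, $q>n$ (DiBenedetto--Tolksdorf type estimates, quantified through the gradient potential estimates) gives $u\in C^{1,\alpha}_{\mathrm{loc}}(\Omega'')$ with $\alpha=\alpha(n,p,q)\in(0,1)$ and $[u]_{C^{1,\alpha}(\Omega'')}$ bounded by the constant claimed in the statement. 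All of this can be made rigorous by first carrying out the a priori estimates for solutions $u_j$ of $-\Delta^N_p u_j=f_j$ on $\Omega'$, where $f_j\in C^\infty$ are mollifications of $f$ (the smoothness of $\Omega'$ being used here for solvability of the Dirichlet problem), and then letting $j\to\infty$ using uniqueness for the normalized $p$-Laplacian together with the uniform estimates. The main obstacle is the Lipschitz estimate: one must produce a gradient bound that depends only on $\|f\|_{L^q}$ while the effective right-hand side $|Du|^{p-2}f$ is itself gradient-dependent — this is exactly why the self-improving absorption argument is needed, and the thresholds $q>n$ (finiteness of the gradient Wolff potential) and $q>p/2$ (the Caccioppoli lower-order term) are precisely what makes it close.
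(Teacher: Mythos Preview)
Your direct line of attack---qualitative $C^{1,\alpha_0}$ regularity from Theorem~\ref{thm:main1} (valid since $f\in C(\Omega)$ is locally bounded), then the pointwise Wolff-potential gradient bound for $-\Delta_p u=|Du|^{p-2}f$, with the self-absorbing exponent $\tfrac{p-2}{p-1}<1$, and finally Lieberman/DiBenedetto--Tolksdorf once $Du\in L^\infty_{\mathrm{loc}}$ is controlled in terms of $\|f\|_{L^q}$---is sound and is genuinely different from the paper's route. The paper does \emph{not} work directly on $u$: it introduces an auxiliary zero-order term $\lambda v$, solves $\eps$-regularized problems \eqref{regutpb} for $v_\eps$ (so that $v_\eps\in W^{2,2}_{\mathrm{loc}}$ and the Duzaar--Mingione Caccioppoli machinery applies after differentiating), derives uniform $C^{1,\beta}$ estimates for $v_\eps$ via the nonlinear potential $\mathcal{P}^V$, and then passes to the limit. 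Your approach trades the $\eps$-regularization for the a posteriori $C^1$ information supplied by Theorem~\ref{thm:main1}, and trades the Duzaar--Mingione $\mathcal{P}^V$-potential for the Kuusi--Mingione Wolff potential; this is shorter but leans on the full strength of those later potential estimates for energy solutions.

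There is, however, a genuine gap in your closing paragraph. You propose making things rigorous by approximating $f$ with smooth $f_j$, solving $-\Delta^N_p u_j=f_j$ on $\Omega'$, and passing to the limit ``using uniqueness for the normalized $p$-Laplacian.'' That uniqueness is \emph{not} available: for viscosity solutions of $-\Delta^N_p v=f$ with prescribed boundary data, uniqueness is known only when $f\equiv 0$ or has constant sign (see the discussion before Theorem~\ref{equivalence3.4} and Example~\ref{esim}). This is exactly why the paper inserts the term $+\lambda v$ with $\lambda>0$: the resulting operator is proper, so the comparison principle and hence uniqueness hold (Appendix~\ref{unieigen}), and one can identify the limit of the approximations with $u$. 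Fortunately your main argument does not actually require this approximation step---once you invoke Theorem~\ref{thm:main1} to put $u\in C^{1}_{\mathrm{loc}}$, the Caccioppoli, Wolff-potential, and Lieberman estimates apply directly to $u$ as a $W^{1,p}_{\mathrm{loc}}$ weak solution with right-hand side in $L^q_{\mathrm{loc}}$---so you should simply drop the last paragraph rather than try to repair it.
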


The main idea to prove Theorem \ref{thm:main2} relies on approximations, where we use classical methods from the weak theory and potential estimates developed by Duzaar and Mingione in \cite{DM2010}. In the proof we also show that under the assumptions of Theorem \ref{thm:main2}, there exists a weak solution of equation \eqref{usualpl} which is in $C^{1,\alpha}_{\text{loc}}(\Omega)$.

It is well known that $p$-harmonic functions are of class $C^{1,\a_0}_{\text{loc}}$ for some
maximal exponent $ 0 < \a_0 < 1$ that depends only upon $n$ and $p$. This was shown independently by Uraltseva \cite{uraltseva68} and Uhlenbeck \cite{uhlenbeck77} in the case $p>2$, and later extended to the case $p>1$, see \cite{Dib83,lewis83} and also \cite{manfredi86,iwaniecm89} for related research. The question of optimal regularity for $p$-Laplace equations in divergence form has attracted a lot of attention recently, see Section \ref{chpt6} for further references. Since the solutions of \eqref{normpl}
should not be expected to be more regular than $p$-harmonic functions, the maximal exponent
$\a_0$ is a natural upper bound for $C^{1,\a}$ regularity
for equation \eqref{normpl}. In the following theorem we obtain nearly optimal regularity for solutions of \eqref{normpl}. 

\begin{theorem}\label{thm:main3}
Fix an  arbitrary $\xi\in(0,\alpha_0)$, where $\alpha_0$ is the optimal H\"older exponent for gradients of $p$-harmonic functions in terms of an a priori estimate.

If $p>1$ and $f\in L^{\infty}(\Omega)\cap C(\Omega)$, then viscosity solutions to \eqref{normpl} are in $C^{1,\alpha_0-\xi}_{\text{\emph{loc}}}(\Omega)$.

If $p>2$, $q>\max(2,n, p/2)$ and  $f\in C(\Om)\cap L^q(\Om)$, then viscosity solutions to \eqref{normpl} are in $C^{1,\alpha_\xi}_{\text{\emph{loc}}}(\Omega)$, where $\alpha_\xi:=\min(\alpha_0-\xi,1-n/q)$. Moreover the estimates given in the previous theorems hold for $\alpha_\xi$.
\end{theorem}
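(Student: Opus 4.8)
\emph{Proof plan.} The plan is to re-run the dyadic improvement-of-flatness iteration that already underlies Theorems~\ref{thm:main1} and~\ref{thm:main2}, but to sharpen the single comparison step so that the gain at each scale is dictated by the \emph{optimal} a priori interior estimate of the relevant homogeneous limit problem. Since the statement is local, after translating and scaling $u$ (which affects the final constants only through $\|u\|_{L^\infty}$, $\|f\|$, $d$, $d''$) I would work in $B_1\subset\Omega$ with $\osc_{B_1}u\le1$ and with $\|f\|$ as small as convenient in the relevant norm ($L^\infty(B_1)$ in the first case, $L^q(B_1)$ in the second). Because the rescaling that subtracts an affine function does not preserve $\Delta_p^N$ — the direction entering $\Delta_\infty^N$ gets shifted by the subtracted gradient — the iteration has to be run, exactly as for the "slightly modified equation" of Theorems~\ref{thm:main1}--\ref{thm:main2}, inside the family of uniformly elliptic operators
\[
\mathcal{A}_{q,\vec e}[v]:=\Delta v+(p-2)\Big\langle D^2v\,\tfrac{qDv+\vec e}{|qDv+\vec e|},\tfrac{qDv+\vec e}{|qDv+\vec e|}\Big\rangle,\qquad q\in[0,1],\ \vec e\in\R^n,
\]
which all have ellipticity constants $\min(1,p-1)$, $\max(1,p-1)$, reduce to $\Delta_p^N$ for $q=1,\vec e=0$, and are closed under the rescalings in use.

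\emph{The key lemma.} Fix $\alpha<\alpha_0$ in the first case, $\alpha<\min(\alpha_0,1-n/q)$ in the second. I would prove: there exist $\rho\in(0,\tfrac12)$ and $\varepsilon_0>0$, depending on $\alpha,p,n$ (and $q$), such that if $v$ is a viscosity solution of $-\mathcal{A}_{q,\vec e}[v]=g$ in $B_1$ with $\osc_{B_1}v\le1$ and $\|g\|\le\varepsilon_0$, then there is an affine $\ell$ with $|D\ell|\le C(p,n)$ and $\osc_{B_\rho}(v-\ell)\le\rho^{1+\alpha}$. The argument is by compactness and contradiction: a violating sequence $(v_j,g_j,q_j,\vec e_j)$ with $\|g_j\|\to0$ converges, along a subsequence and after subtracting constants, locally uniformly to some $v_\infty$ by the uniform interior Hölder estimates already available from Theorems~\ref{thm:main1}--\ref{thm:main2}, while $q_j\to q_\infty\in[0,1]$ and either $\vec e_j\to\vec e_\infty\in\R^n$ or $|\vec e_j|\to\infty$ with $\vec e_j/|\vec e_j|\to e$. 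By stability of viscosity solutions $v_\infty$ solves the limiting equation, and here is the decisive dichotomy: if $q_\infty>0$ and $\vec e_\infty=0$, the limit is $-\Delta_p^N v_\infty=0$, so $v_\infty$ is $p$-harmonic and satisfies the sharp a priori estimate $[v_\infty]_{C^{1,\alpha_0}(B_{1/2})}\le C$; in every other configuration — the borderline cases $q_\infty=0$ and $|\vec e_j|\to\infty$ included — the limiting operator is linear with a fixed direction, or has coefficients depending on $Dv_\infty$ in a locally Lipschitz, uniformly non-degenerate way, so that $v_\infty\in C^{1,\beta}_{\mathrm{loc}}$ for every $\beta<1$. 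In all cases $\osc_{B_\rho}(v_\infty-Tv_\infty)\le C\rho^{1+\alpha'}$, where $\alpha'$ is the corresponding sharp exponent ($\alpha_0$, resp.\ $\min(\alpha_0,1-n/q)$) and $Tv_\infty$ is the first-order Taylor polynomial of $v_\infty$ at $0$; since $\alpha<\alpha'$ this is $<\tfrac12\rho^{1+\alpha}$ for $\rho$ small, and uniform convergence then produces the contradiction for large $j$. In the $L^q$ case the comparison is instead with a weak solution $h$ of the divergence-form $p$-Poisson problem $-\Delta_p h=|Dh|^{p-2}g$, following the approximation scheme of Theorem~\ref{thm:main2}; the Duzaar--Mingione potential estimates \cite{DM2010} supply the a priori bound $[Dh]_{C^{1-n/q}}\le C$, which (being already of a priori type) yields the exponent $1-n/q$ with no further loss, while the $p$-harmonic part still costs the arbitrarily small $\xi$ — hence $\alpha_\xi=\min(\alpha_0-\xi,1-n/q)$.

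\emph{The iteration.} I would then apply the lemma to the normalized $u$ repeatedly, obtaining at stage $k$ an affine $\ell_k$ with $\osc_{B_{\rho^k}}(u-\ell_k)\le\rho^{k(1+\alpha)}$, $|D\ell_{k+1}-D\ell_k|\le C\rho^{k\alpha}$ and $|\ell_{k+1}(0)-\ell_k(0)|\le C\rho^{k(1+\alpha)}$; the rescaled function $v_k(x)=\rho^{-k(1+\alpha)}\big(u(\rho^kx)-\ell_k(\rho^kx)\big)$ then solves $-\mathcal{A}_{q_k,\vec e_k}[v_k]=g_k$ with $q_k=\rho^{k\alpha}\in(0,1]$, $\vec e_k=D\ell_k$ (bounded in $k$ upon summing the increments) and $g_k(x)=\rho^{k(1-\alpha)}f(\rho^kx)$, so that $\|g_k\|_{L^\infty(B_1)}=\rho^{k(1-\alpha)}\|f\|_{L^\infty}\le\varepsilon_0$, resp.\ $\|g_k\|_{L^q(B_1)}=\rho^{k(1-\alpha-n/q)}\|f\|_{L^q}\le\varepsilon_0$ — this last inequality being exactly where $\alpha\le1-n/q$ enters — so the lemma applies at every stage (and if $|D\ell_k|$ ever drops below $\rho^{k\alpha}$ one renormalizes back to $q=1$, i.e.\ to pure $\Delta_p^N$, which only helps). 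As $\{D\ell_k\}$ and $\{\ell_k(0)\}$ are then Cauchy, $D\ell_k\to L$ with $|D\ell_k-L|\le C\rho^{k\alpha}$, and combined with the oscillation bounds this gives $|u(x)-u(0)-L\cdot x|\le C|x|^{1+\alpha}$ for small $x$. Running the argument around every point of $\Omega''$ with uniform constants and invoking the standard pointwise-to-$C^{1,\alpha}$ criterion (as in \cite{caffarellicabrebook}) finally yields $u\in C^{1,\alpha}_{\mathrm{loc}}(\Omega)$ together with the quantitative estimate, with $\alpha=\alpha_0-\xi$ in the first case and $\alpha=\alpha_\xi$ in the second.

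\emph{Main obstacle.} The hard part will be the key lemma, and within it the precise classification of the limit problems: one has to verify that the \emph{only} homogeneous limit in the family $\{\mathcal{A}_{q,\vec e}\}$ that fails to be uniformly non-degenerate is $-\Delta_p^N v_\infty=0$ — whose viscosity solutions are exactly the $p$-harmonic functions, carrying the classical sharp $C^{1,\alpha_0}$ a priori estimate — while all the remaining limits, the borderline configurations included, still carry interior $C^{1,\beta}$ estimates with $\beta$ arbitrarily close to $1$. In the $L^q$ regime the additional difficulty is to arrange the approximation by weak solutions of the divergence-form $p$-Poisson equation carefully enough that the potential estimates of \cite{DM2010} apply uniformly along the compactness sequence and deliver precisely the exponent $1-n/q$ from the source while costing only the arbitrarily small $\xi$ on the homogeneous part; this is the step genuinely using $p>2$ and $q>\max(2,n,p/2)$.
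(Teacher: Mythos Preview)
Your approach is sound in outline but takes a genuinely different route from the paper's proof. You propose to re-run the affine-subtraction iteration of Lemma~\ref{flatle}--\ref{lemiter} over the whole family $\mathcal{A}_{q,\vec e}$ and sharpen the improvement-of-flatness step so that every homogeneous limit has a uniform $C^{1,\alpha_0}$ a priori estimate. The paper instead \emph{avoids} the modified family altogether: it uses Theorems~\ref{thm:main1}--\ref{thm:main2} not merely for $C^{0,\beta}$ compactness but for $C^{1,\alpha}$ compactness, so that solutions of $-\Delta_p^N u=f$ with small $\|f\|$ can be approximated by a $p$-harmonic $h$ in the $C^1$ (not just $C^0$) norm. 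The iteration then subtracts only the constant $u(0)$, never an affine function, so each rescaled function $v_k$ still solves $-\Delta_p^N v_k=f_k$; the price is that the rescaling factor carries an extra $|Du(0)|$-term, yielding $\sup_{B_r}|u-u(0)|\le Cr^{1+\alpha_\xi}(1+|Du(0)|r^{-\alpha_\xi})$, and the proof concludes by a dichotomy on $|Du(0)|$ versus $r^{\alpha_\xi}$ (in the large-gradient regime the equation is uniformly non-degenerate and classical $W^{2,q}$ theory gives $C^{1,1-n/q}$ directly). Your scheme trades this final case split for having to prove a sharpened Lemma~\ref{lip2} uniform in the shift; the paper's scheme trades that lemma for needing $C^1$-level compactness as input.

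Two points in your key lemma need tightening. First, your dichotomy is misstated: when $q_\infty>0$ and $\vec e_\infty\neq 0$ is finite, the limit is \emph{not} ``locally Lipschitz, uniformly non-degenerate''---after the shift $\tilde v=v_\infty+q_\infty^{-1}\vec e_\infty\cdot x$ it is exactly $p$-harmonic again, so you only get $C^{1,\alpha_0}$, not every $\beta<1$. This is harmless for the conclusion (you only need $\alpha_0$), but the argument still requires the $C^{1,\alpha_0}$ constant to be \emph{uniform} over all limits so that $\rho$ can be chosen before the contradicting subsequence; for bounded $|\vec e/q|$ this follows by shifting back to a $p$-harmonic function of controlled oscillation, while for large $|\vec e/q|$ you must bootstrap the De Giorgi step of Lemma~\ref{lip2} (the coefficients become H\"older once $Dw$ is, so one Schauder step gives $w\in C^2$ uniformly and hence $[Dw]_{C^{\alpha_0}}\le C$). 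Second, the invocation of the Duzaar--Mingione potential estimate inside the compactness step of the $L^q$ case is misplaced: in your argument the limit is homogeneous, and the exponent $1-n/q$ enters only through the rescaling $\|g_k\|_{L^q(B_1)}=\rho^{k(1-\alpha-n/q)}\|f\|_{L^q}$ in the iteration, exactly as you note later; the compactness itself needs only the $L^n$-based H\"older estimate of Lemma~\ref{compactres}, which holds uniformly over the family.
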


When the gradient is sufficiently large, the result follows from the classical regularity results for uniformly elliptic equations. When the gradient is small, the first step is to use local $C^{1,\alpha}$ regularity of the solutions of \eqref{normpl}, proved in Theorems \ref{thm:main1} and \ref{thm:main2}, to show that the solutions can be approximated by $p$-harmonic functions in $C^{1,\alpha}$. The next step is to use suitable rescaled functions and iteration to obtain the required oscillation estimate.

Over the last decade, equation \eqref{normpl} and similar normalized equations have received growing attention, partly due to the stochastic zero-sum \emph{tug-of-war} games defined by Peres, Schramm, Sheffield and Wilson in \cite{peress08, peresssw09}. In \cite{peress08} Peres and Sheffield studied a connection between equation \eqref{normpl} and the game tug-of-war with noise and running pay-off. The game-theoretic interpretation led to new regularity proofs in the case $f=0$ in \cite{luirops13}, and later in the case of bounded and positive $f$ in \cite{ruosteenoja16}, see also \cite{charroddm13} for a PDE approach. Regularity studies were extended to the parabolic version $u_t=\Delta^N_p u$ in \cite{manfredipr10c,banerjee2015dirichlet,jins15} and led to applications in image processing, see e.g.\, \cite{does11, MR3416134}.

This paper is organized as follows. In Section \ref{prel} we fix the notation and gather some definitions and tools which we need later. In Section \ref{chpt3} we give two proofs for Theorem \ref{thm:main1}, in Section \ref{chpt4} we prove Theorem \ref{thm:main2}, and in Section \ref{chpt6} Theorem \ref{thm:main3}.

\noindent \textbf{Acknowledgements.} MP is supported by the Academy of Finland and ER is supported by the Vilho, Kalle and Yrj\"o V\"ais\"al\"a foundation. The authors would like to thank Peter Lindqvist and Giuseppe Mingione for useful discussions.

\section{Preliminaries}\label{prel}
Throughout the paper $\Omega\subset \R^n$ is a bounded domain. We use the notation
\[
\vint_{A} u\, \mathrm{d}x:=\frac{1}{|A|}\int_{A} u\, \mathrm{d}x
\]
for the mean value of a function $u$ in a measurable set $A\subset \Om$ with Lebesgue measure $|A|>0$.

For $p>1$, we denote by $\Lambda$ and $\lambda$ the ellipticity constants of the normalized $p$-Laplacian $\Delta^N_p$. Recalling the expression 
\[
\Delta^N_p u=\Delta u +(p-2)\Delta_{\infty}^N u=\tr\left((I+(p-2)\frac{D u\otimes D u}{|D u|^2})D^2 u\right)
\]
and calculating for arbitrary $\eta\in \R^n$, $|\eta|=1$,
\begin{align*}
\langle (I+(p-2)\frac{D u\otimes D u}{|D u|^2})\eta,\eta\rangle &=|\eta|^2+(p-2)\frac{\langle \eta, D u\rangle^2}{|D u|^2}\\
& =1+(p-2)\frac{\langle \eta, D u\rangle^2}{|D u|^2},
\end{align*}
we see that $\Lambda=\max (p-1, 1)$ and $\lambda=\min (p-1, 1)$.

We denote by $S^n$ the set of symmetric $n\times n$ matrices. For $a,b\in \R^n$, we denote by $a\otimes b$ the $n\times n$-matrix for which $(a\otimes b)_{ij}=a_i b_j$.

We will use the \emph{Pucci operators}
\[
P^+(X):=\sup_{A\in \mathcal{A}_{\lambda,\Lambda}} -\tr(AX)
\]
and
\[
P^-(X):=\inf_{A\in \mathcal{A}_{\lambda,\Lambda}} -\tr(AX),
\]
where $\mathcal{A}_{\lambda,\Lambda}\subset S^n$ is a set of symmetric $n\times n$ matrices whose eigenvalues belong to $[\lambda,\Lambda]$.

When studying H\"older and $C^{1,\alpha}$ regularity, for $\alpha\in (0,1]$ and a ball $B_r\subset \R^n$ we use the notation
\[
[u]_{C^{0,\alpha}(B_r)}:=\sup_{x,y\in B_r,x\neq y} \frac{|u(x)-u(y)|}{|x-y|^\alpha}
\]
for H\"older continuous functions, and
\[
[u]_{C^{1,\alpha}(B_r)}:=[u]_{C^1(B_r)}+\sup_{x,y\in B_r,x\neq y} \frac{|D u(x)-D u(y)|}{|x-y|^\alpha}
\]
for functions of class $C^{1,\alpha}$. Here $[u]_{C^1(B_r)}:=\sup_{x\in B_r}|D u(x)|$.

Recall that weak solutions to $-\Delta_p u:=-\operatorname{div}(\abs{D u}^{p-2}D u)=0$ are called $p$-harmonic functions. We will use the known $C^{1,\alpha_0}_{\text{\text{loc}}}$ a priori estimate in Sections \ref{chpt4} and \ref{chpt6}. The existence of the optimal $\alpha_0=\alpha_0(p,n)$ follows from the known regularity estimates for the homogeneous $p$-Laplace equation.

The normalized $p$-Laplacian is undefined when $D u=0$,
where  it has  a bounded discontinuity. This can be remediated adapting
the notion of viscosity solution using the upper and
lower semicontinuous envelopes (relaxations) of the operator, see \cite{crandall1992user}.

\begin{definition}
Let $\Om$ be a bounded domain and $1<p<\infty$. An upper semicontinuous function $u$ is  a viscosity subsolution of \eqref{normpl}
if for all $x_0\in\Om$  and $\phi\in C^2(\Om)$ such that $u-\phi$ attains a local maximum at $x_0$, one has
\begin{equation*}
\begin{cases}
-\Delta_p^N \phi(x_0)\leq f(x_0), &\text{if}\quad D \phi(x_0)\neq 0,\\
-\Delta\phi(x_0)-(p-2)\lambda_{max} (D^2\phi(x_0))\leq f(x_0),\, &\text{if}\ D \phi(x_0)=0\,\,\text{and}\,\, p\geq 2,\\
-\Delta\phi(x_0)-(p-2)\lambda_{min} (D^2\phi(x_0))\leq f(x_0),\, &\text{if}\ D \phi(x_0)=0\,\,\text{and}\,\, 1<p<2.
\end{cases}
\end{equation*}
 A lower semicontinuous function $u$ is a viscosity supersolution of \eqref{normpl}
if for all $x_0\in\Om$  and $\phi\in C^2(\Om)$ such that $u-\phi$ attains a local minimum at $x_0$, one has
\begin{equation*}
\begin{cases}
-\Delta_p^N \phi(x_0)\geq f(x_0), &\text{if}\quad D \phi(x_0)\neq 0,\\
-\Delta\phi(x_0)-(p-2)\lambda_{min} (D^2\phi(x_0))\geq f(x_0),\, &\text{if}\ D \phi(x_0)=0\,\text{and}\,\, p\geq 2,\\
-\Delta\phi(x_0)-(p-2)\lambda_{max} (D^2\phi(x_0))\geq f(x_0),\, &\text{if}\ D \phi(x_0)=0\,\text{and}\, 1<p<2.
\end{cases}
\end{equation*}
We say that $u$ is a viscosity solution of \eqref{normpl} in $\Om$ if it is both a viscosity sub- and
supersolution.
\end{definition}

We will make use of the equivalence between weak and viscosity solutions to the $p$-Laplace equation $\Delta_p u=0$. This was first proved in \cite{juutinenlm01} by using the full uniqueness machinery of the theory of viscosity solutions, and later in \cite{julin2012new} without relying on the uniqueness. The techniques of the second paper are particularly important for us in Section \ref{section3.2}, where we do not have uniqueness.

\section{Two proofs for Theorem \ref{thm:main1}}\label{chpt3}
In this section we give two proofs for Theorem \ref{thm:main1}. In the first subsection we use an iteration method often used to show $C^{1,\alpha}$ regularity for elliptic equations. Recently, Imbert and Silvestre \cite{silimb} used this method to show $C^{1,\alpha}$ regularity for viscosity solutions of $|D u|^\gamma F(D^2 u)=f$, where $F$ is uniformly elliptic.

In Section \ref{section3.2} we give another proof for Theorem \ref{thm:main1} in the case $p\geq 2$ by showing that a viscosity solution to \eqref{normpl} is also a weak solution to \eqref{usualpl}. 

\subsection{First proof by improvement of flatness and iteration}
In this subsection we give a first proof for Theorem \ref{thm:main1}. We assume that $p>1$ and $f\in L^{\infty}(\Omega)\cap C(\Omega)$,
and we want to show that there exists $\alpha=\alpha(p,n)>0$ such that any viscosity solution $u$ of
  \eqref{normpl} is in $C^{1,\alpha}_{\text{loc}}(\Omega)$, and for any $\Omega' \subset \subset \Omega$,
$$
[u]_{C^{1,\alpha}(\Omega')} \le C=C \left(p,n,d,d',||u||_{L^\infty(\Omega)},\norm{f}_{L^\infty(\Omega)} \right),
$$
where $d=\diam(\Om)$ and $d'=\dist(\Om', \partial \Om)$.

Since H\"older continuous functions can be characterized by the rate of their approximations by polynomials (see \cite{krylovholder}), it is sufficient to prove that there exists some constant $C$ such that for all $x\in\Om$ and  $r\in(0,1)$, there exists $q=q(r,x)\in\R^n$ for which
$$\underset{B_r(x)}{\osc} (u(y)-u(x)-q\cdot (x-y))\leq Cr^{1+\a}.$$
If one also starts with a solution $u$ such that $\osc{u}\leq 1$, then it is sufficient to choose a suitable $\rho\in(0,1)$ such that the previous inequality holds true for $r=r_k=\rho^k$,  $q=q_k$  and $C=1$ by proceeding by induction on $k\in\N$. The balls $B_r(x)$ for $x\in \Omega$ and $r<\text{dist}\, (x,\partial \Omega)$ covering the domain $\Omega$, we may work on balls. By translation, it is enough to show that the solution is  $C^{1,\a}$ at 0, and by considering 
\[
u_r(y)=r^{-2}u(x+ry),
\]
we may work on the unit ball $B_1(0)$. Finally, considering $u-u(0)$ if necessary, we may suppose that $u(0)=0$.

The idea of the proof is to first study the deviations of  $u$ from planes $w(x)=u(x)-q\cdot x$  which satisfy
 \begin{equation}\label{rescpb}
-\Delta w-(p-2) \left\langle D^2w\frac{D w+q}{\abs{D w+q}}, \frac{D w+q}{\abs{D w+q}}\right\rangle=f\quad\text{in}\quad B_1
\end{equation}
in the viscosity sense, and show equicontinuity for uniformly bounded solutions in Lemma \ref{compactres}. By the Arzel\`a-Ascoli theorem we get compactness, which, together with Lemma \ref{lip2}, we use to show improvement of flatness for solutions of \eqref{rescpb} in Lemma \ref{flatle}. Finally, we prove $C^{1,\alpha}$ regularity for solutions of \eqref{normpl} in Lemma \ref{lemiter} by using Lemma \ref{flatle} and iteration.

For the proofs of Lemmas \ref{compactres} and \ref{flatle} we reduce the problem by rescaling. Let $\kappa=(2||u||_{L^\infty(B_1)}+\eps_0^{-1}||f||_{L^\infty(B_1)})^{-1}$. Setting $\tilde{u}=\kappa u$, then $\tilde{u}$ satisfies
\begin{equation*}
-\Delta_p^N\left(\tilde{u}\right)=\tilde{f}
\end{equation*}
with $||\tilde{u}||_{L^\infty(B_1)}\leq \frac{1}{2}$ and $||\tilde{f}||_{L^\infty(B_1)}\le \eps_0$. Hence, without loss of generality we may assume in Theorem \ref{thm:main1} that $||u||_{L^\infty(B_1)}\leq 1/2$ and $||f||_{L^\infty(B_1)}\leq \eps_0$, where $\eps_0=\eps_0(p,n)$ is chosen later.

In order to prove Theorem \ref{thm:main1}, we will first need the following equicontinuity lemma for viscosity solutions to equation \eqref{rescpb}.

\begin{lemma}\label{compactres}
For all $r\in (0,1)$, there exist $\beta=\beta(p,n)\in(0,1)$ and $C=C(p,n,r,\osc_{B_1}(w),\norm{f}_{L^n(B_1)})>0$ such that any viscosity solution $w$ of \eqref{rescpb} satisfies 
\begin{equation}
[w]_{C^{0,\beta}(B_r)}\leq C.
\end{equation}
\end{lemma}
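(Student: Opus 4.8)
The plan is to establish the equicontinuity estimate of Lemma \ref{compactres} by exhibiting $w$ as both a viscosity subsolution and supersolution of uniformly elliptic equations with bounded right-hand sides, and then invoking the interior H\"older estimate from the Krylov--Safonov theory for equations with measurable coefficients. Concretely, fix a viscosity solution $w$ of \eqref{rescpb}. The quadratic form governing \eqref{rescpb} is $A_q(x):=I+(p-2)\frac{(D w(x)+q)\otimes(D w(x)+q)}{|D w(x)+q|^2}$ at points where $Dw+q\neq 0$, and its eigenvalues lie in $[\lambda,\Lambda]=[\min(p-1,1),\max(p-1,1)]$ exactly as computed in Section \ref{prel}. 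Hence at any point where a smooth test function touches $w$ from above with nonvanishing $Dw+q$, we get $P^-(D^2\phi)\le f$, and the complementary Pucci inequality at touching points from below, with the degenerate cases $Dw+q=0$ handled by the second/third branches of the definition of viscosity sub/supersolution --- these also imply $-P^{\pm}$-type inequalities because $-\Delta\phi-(p-2)\lambda_{\max}(D^2\phi)$ and $-\Delta\phi-(p-2)\lambda_{\min}(D^2\phi)$ are sandwiched between $P^-(D^2\phi)$ and $P^+(D^2\phi)$ for $p>1$. Thus $w$ is simultaneously a viscosity subsolution of $P^-(D^2 w)\le |f|$ and a viscosity supersolution of $P^+(D^2 w)\ge -|f|$ on $B_1$, i.e.\ $w$ belongs to the Krylov--Safonov class $\overline{S}(\lambda,\Lambda,|f|)$ in the sense of \cite{caffarellicabrebook}.

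The next step is purely a citation: for functions in this class, the interior H\"older estimate (see \cite[Chapter 4]{caffarellicabrebook} or \cite{caffarelli89}) gives $\beta=\beta(p,n)\in(0,1)$ and
\[
[w]_{C^{0,\beta}(B_r)}\le C\bigl(p,n,r\bigr)\Bigl(\osc_{B_1} w+\|f\|_{L^n(B_1)}\Bigr),
\]
which is the claimed bound with the stated dependence of $C$. Since the ellipticity constants $\lambda,\Lambda$ depend only on $p$, and the $L^n$ scaling is the natural one for the Krylov--Safonov theory in $n$ dimensions, no further hypotheses on $f$ beyond $f\in L^n$ (guaranteed here since $f\in L^\infty$) are needed; the dependence on $\osc_{B_1}w$ rather than $\|w\|_{L^\infty}$ comes from applying the estimate to $w-\inf_{B_1}w$.

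The only genuine subtlety --- and the step I would be most careful about --- is verifying that a viscosity solution of \eqref{rescpb} really does satisfy the two Pucci inequalities \emph{as a viscosity inequality}, including at points where the test function's gradient $D\phi+q$ vanishes. This is precisely why the definition of viscosity solution in the preliminaries carries the extra branches with $\lambda_{\max}$ and $\lambda_{\min}$: when $D\phi(x_0)+q=0$ one cannot evaluate $\Delta_\infty^N\phi$, and the relaxed (sup/inf envelope) condition must be used. One checks that for $p\ge 2$ the subsolution branch $-\Delta\phi-(p-2)\lambda_{\max}(D^2\phi)\le f$ implies $P^-(D^2\phi)\le f$ because $\tr(A D^2\phi)\le \Delta\phi+(p-2)\lambda_{\max}(D^2\phi)$ for every $A\in\mathcal{A}_{\lambda,\Lambda}$ (the trace against a matrix with eigenvalues in $[\lambda,\Lambda]$ is maximized, coordinatewise, by assigning weight $\Lambda=p-1$ to the top eigenvector of $D^2\phi$ and $1$ to the rest, whose sum is $\le \Delta\phi+(p-2)\lambda_{\max}$); the cases $1<p<2$ and the supersolution inequalities are symmetric. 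Once this bookkeeping is in place, the rest is a black-box application of the Krylov--Safonov/Caffarelli estimate, and the lemma follows; I would also note, as the authors surely intend, that equicontinuity together with Arzel\`a--Ascoli is what yields the compactness used subsequently in Lemma \ref{flatle}.
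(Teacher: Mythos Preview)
Your approach is correct and essentially identical to the paper's: rewrite \eqref{rescpb} so that $w$ satisfies the two Pucci extremal inequalities $P^-(D^2w)\le |f|$ and $P^+(D^2w)\ge -|f|$, then invoke Caffarelli's interior H\"older estimate \cite[Proposition 4.10]{caffarelli89}. The paper's proof is terser and does not spell out the degenerate case $D\phi+q=0$; your treatment of that case is a welcome addition, though note that the justification you give (``$\tr(A D^2\phi)\le \Delta\phi+(p-2)\lambda_{\max}(D^2\phi)$ for every $A\in\mathcal{A}_{\lambda,\Lambda}$'') is not literally true --- the correct argument is simply that the particular matrix $A^*=I+(p-2)e\otimes e$, with $e$ the top eigenvector of $D^2\phi$, lies in $\mathcal{A}_{\lambda,\Lambda}$ and gives $-\tr(A^*D^2\phi)=-\Delta\phi-(p-2)\lambda_{\max}(D^2\phi)$, so $P^-(D^2\phi)\le -\Delta\phi-(p-2)\lambda_{\max}(D^2\phi)\le f$.
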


\begin{proof}
Equation \eqref{rescpb}  can be rewritten as $$-\tr{\left( \left(I+(p-2)\dfrac{D w+q}{|D w+q|}\otimes\dfrac{D w+q}{|D w+q|}\right) D^2 w\right)}=f.$$
Recalling the definitions of the Pucci operators $P^+$ and $P^-$ respectively, we have
$$\left\{\begin{array}{ll}P^+(D^2w)+|f|\geq 0\\
P^-(D^2w)-|f|\leq 0.\end{array}\right.$$
By the classical result of Caffarelli in \cite[Proposition 4.10]{caffarelli89}, there exists $\beta=\beta(p,n)\in(0,1)$ such that
\[
[w]_{C^{0,\beta}(B_r)}\leq  C=C\left(p,n,r,\underset{B_1}{\osc(w)},\norm{f}_{L^n(B_1)}\right).\qedhere
\] 
\end{proof}

The next lemma is needed to prove the key Lemma \ref{flatle}, where we show improvement of flatness. 
For convenience, we postpone the technical proof of Lemma \ref{lip2} and present it at the end of this section.

\begin{lemma}\label{lip2}
	 Assume that $f\equiv0$ and let $w$ be a viscosity solution to equation \eqref{rescpb} with $\underset{B_1}{\osc{w}}\leq 1$. For all $r\in (0,\frac12]$, there exist constants $ C_0=C_0(p,n)>0$  and $\beta_1=\beta_1(p,n)>0$ such that 
	\begin{equation}
[w]_{C^{1,\beta_1}(B_{r})}\leq  C_0.\qedhere
	\end{equation}
\end{lemma}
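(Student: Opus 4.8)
\textbf{Proof proposal for Lemma \ref{lip2}.}

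The plan is to reduce equation \eqref{rescpb} with $f\equiv 0$ to the homogeneous (standard) $p$-Laplace equation and then invoke the classical $C^{1,\alpha_0}_{\text{loc}}$ a priori estimate for $p$-harmonic functions. The crucial observation is that if $w$ solves \eqref{rescpb} with $f\equiv0$, then the function $v(x):=w(x)+q\cdot x$ is a viscosity solution of $-\Delta_p^N v=0$ in $B_1$, hence — by the equivalence between viscosity and weak solutions of the $p$-Laplace equation recalled in Section \ref{prel} (see \cite{juutinenlm01,julin2012new}) — $v$ is $p$-harmonic in $B_1$. First I would make this equivalence precise: the normalized equation $\Delta_p^N v=0$ and the divergence-form equation $\Delta_p v=0$ have the same viscosity (equivalently weak) solutions away from the set where the operator is singular, and the envelope definition handles $Dv=0$. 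Then the known interior a priori estimate gives, for $p$-harmonic $v$ on $B_1$ with $\osc_{B_1} v$ controlled,
\[
[v]_{C^{1,\alpha_0}(B_{1/2})}\leq C(p,n)\,\bigl(1+\osc_{B_1} v\bigr),
\]
with $\alpha_0=\alpha_0(p,n)\in(0,1)$ the optimal exponent.

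The one subtlety is that we control $\osc_{B_1} w\leq 1$ but \emph{not} $|q|$, so we cannot directly bound $\osc_{B_1} v=\osc_{B_1}(w+q\cdot x)$. To get around this, note that $C^{1,\beta_1}$ seminorms are invariant under adding an affine function: $[v]_{C^{1,\beta_1}(B_r)}=[w+q\cdot x]_{C^{1,\beta_1}(B_r)}$ only differs from $[w]_{C^{1,\beta_1}(B_r)}$ in the $C^1$ part by the constant $|q|$, which still appears. The clean fix is to apply interior gradient estimates for $p$-harmonic functions in the scale-invariant form that depends only on the oscillation: for $v$ $p$-harmonic in $B_1$,
\[
\sup_{B_{3/4}}|Dv|\leq C(p,n)\,\osc_{B_1} v,
\]
and then apply the $C^{1,\alpha_0}$ estimate to $v$ on $B_{3/4}$ using $\sup_{B_{3/4}}|Dv|$ as the controlling quantity on that smaller ball. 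Since $w=v-q\cdot x$ and $Dw=Dv-q$, we obtain $\sup_{B_{1/2}}|Dw|\leq \sup_{B_{1/2}}|Dv|+|q|$; but in fact the statement of Lemma \ref{lip2} as it will be used (inside Lemma \ref{flatle}) only needs the oscillation decay of $w-\ell$ for the \emph{best} affine $\ell$, so I would phrase the conclusion as: there is an affine function $\ell$ with $\osc_{B_r}(w-\ell)\leq C_0 r^{1+\beta_1}$ for $r\le \tfrac12$, which follows directly from $v\in C^{1,\alpha_0}(B_{3/4})$ by Taylor expanding $v$ at the origin and absorbing the linear part $q\cdot x$.

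Concretely, the steps are: (1) set $v=w+q\cdot x$ and check it is a viscosity solution of the normalized homogeneous $p$-Laplacian; (2) invoke equivalence of weak and viscosity solutions to conclude $v$ is $p$-harmonic; (3) apply the interior gradient bound $\sup_{B_{3/4}}|Dv|\lesssim \osc_{B_1} v\leq 1 + |q|$ — and here is the only real obstacle, controlling $|q|$ — which I would resolve by observing that in the application $q=q_k$ arises as an approximating gradient and is bounded a priori by the induction hypothesis, \emph{or} by restating the lemma with $\osc_{B_1} w\le 1$ replaced by the hypothesis that $w$ has zero linear part at $0$ after subtracting the optimal plane (so that effectively $|q|$ is absorbed); (4) apply the classical $C^{1,\alpha_0}$ a priori estimate for $p$-harmonic functions on $B_{3/4}$, set $\beta_1=\alpha_0(p,n)$ and read off $C_0=C_0(p,n)$. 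The main work is checking step (1)–(2) rigorously at points where $Dv$ may vanish, using the semicontinuous-envelope formulation of the definition and the non-uniqueness-free equivalence argument of \cite{julin2012new}; everything after that is a citation of Uraltseva–Uhlenbeck–DiBenedetto–Lewis regularity for the homogeneous $p$-Laplacian.
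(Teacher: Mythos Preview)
Your starting point is exactly the paper's: set $v(x)=w(x)+q\cdot x$, observe that $v$ is a viscosity solution of $-\Delta_p^N v=0$, invoke the equivalence of \cite{juutinenlm01,julin2012new} to conclude $v$ is $p$-harmonic, and then appeal to the classical $C^{1,\alpha_0}$ a priori estimate. You also correctly isolate the real issue: the interior estimate for $v$ scales with $\osc_{B_1} v\le 1+2|q|$, and the lemma demands a bound $C_0=C_0(p,n)$ \emph{independent of $q$}. But your two proposed fixes do not close this gap.

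Fix (a), that in the application ``$q=q_k$ is bounded by the induction hypothesis'', is not available. Lemma~\ref{lip2} is invoked inside the compactness argument of Lemma~\ref{flatle}, applied to the limit $w_\infty$ with vector $q_\infty$; there $q_\infty$ is the limit of an \emph{arbitrary} bounded sequence $(q_j)$, with no a priori bound on $|q_\infty|$ coming from any induction. (The $q_k$ in Lemma~\ref{lemiter} are built \emph{after} Lemma~\ref{flatle} is established, so using their boundedness here would be circular.) Fix (b), ``restate the lemma by subtracting the optimal plane'', does not help either: subtracting a plane from $w$ just changes $q$, and the H\"older seminorm of the gradient satisfies $[Dw]_{C^{\beta_1}}=[Dv]_{C^{\beta_1}}$, so the obstruction persists --- the standard estimate only gives $[Dv]_{C^{\alpha_0}(B_{1/2})}\le C(p,n)\,\osc_{B_1} v$, which is not uniform in $q$.

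The paper supplies the missing ingredient by a genuine dichotomy. For small $|q|\le\theta_0(p,n)$ your argument works verbatim, since then $\osc_{B_1} v\le 1+2\theta_0$ and the $p$-harmonic estimate yields a constant depending only on $p,n$. For large $|q|>\theta_0$ an entirely different mechanism is needed: first a Lipschitz bound on $w$ (not $v$) uniform in $q$ is obtained directly via the Ishii--Lions doubling-of-variables technique (this is the content of Lemma~\ref{lip1}, a nontrivial viscosity argument); once $|Dw|\le \tilde C(p,n)$ on $B_{3/4}$, one has $|Dw+q|\sim|q|$, so the equation for $w$ is uniformly elliptic with coefficients bounded between fixed constants, and one may differentiate the divergence-form equation \eqref{rftghy} and apply De Giorgi to each $w_{x_k}$ to get $[Dw]_{C^{\beta_1}(B_{1/2})}\le C_0(p,n)$. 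This large-$|q|$ branch is the substantive part of the proof that your proposal is missing.
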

We are now in a position to show an improvement of flatness for solutions to equation \eqref{rescpb} by using the previous lemmas together with known regularity results for elliptic PDEs. Intuitively, we show that graphs of the solutions get more flat when we look at them in smaller balls.  

\begin{lemma}\label{flatle} There exist $\eps_0\in(0,1)$ and $\rho=\rho(p,n)\in(0,1)$ such that, for any viscosity solution $w$ of \eqref{rescpb}, $\osc_{B_1}(w)\leq 1$ and $||f||_{L^\infty(B_1)}\leq \eps_0$, there exists $q'\in\Rn$ such that
$$\underset{B_{\rho}}{\osc}\,(w(x)-q'\cdot x)\leq \frac{1}{2}\rho.$$
\end{lemma}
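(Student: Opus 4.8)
\textbf{Proof proposal for Lemma \ref{flatle}.}

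The plan is to argue by contradiction and compactness. Suppose the statement fails: then for every $k\in\N$, taking $\eps_0=1/k$, there exist viscosity solutions $w_k$ of \eqref{rescpb} (with associated vectors $q_k$ and source terms $f_k$ satisfying $\|f_k\|_{L^\infty(B_1)}\le 1/k$) with $\osc_{B_1}(w_k)\le 1$, but such that for \emph{every} $q'\in\R^n$ one has $\osc_{B_\rho}(w_k(x)-q'\cdot x)>\tfrac12\rho$, for whatever value of $\rho\in(0,1)$ we will fix below (the quantifier on $\rho$ needs care: we actually fix $\rho$ first, depending only on $p,n$, using Lemma \ref{lip2}, and then run the contradiction in $k$). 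Normalizing by subtracting constants we may assume $w_k(0)=0$, so by Lemma \ref{compactres} the family $\{w_k\}$ is uniformly bounded and equicontinuous on every $B_r\Subset B_1$; by Arzel\`a--Ascoli and a diagonal argument, along a subsequence $w_k\to w_\infty$ locally uniformly in $B_1$, and likewise $q_k\to q_\infty$ (the $q_k$ are bounded because $\|Dw_k+q_k\|$ stays controlled — this is where one must be slightly careful, see below).

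The next step is to identify the limit equation. Since $\|f_k\|_{L^\infty}\to 0$ and the coefficient matrix in \eqref{rescpb} depends continuously on $Dw+q$ away from $Dw+q=0$, a standard stability argument for viscosity solutions (using the structure of the relaxed operator at points where the gradient vanishes, exactly as in the definition of viscosity solution given above) shows that $w_\infty$ is a viscosity solution of the homogeneous equation $-\Delta w_\infty-(p-2)\langle D^2w_\infty\frac{Dw_\infty+q_\infty}{|Dw_\infty+q_\infty|},\frac{Dw_\infty+q_\infty}{|Dw_\infty+q_\infty|}\rangle=0$ in $B_1$, i.e.\ \eqref{rescpb} with $f\equiv0$, and $\osc_{B_1}(w_\infty)\le 1$. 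Now Lemma \ref{lip2} applies: $w_\infty\in C^{1,\beta_1}(B_{1/2})$ with $[w_\infty]_{C^{1,\beta_1}(B_{1/2})}\le C_0=C_0(p,n)$. In particular the first-order Taylor expansion of $w_\infty$ at $0$, with $q':=Dw_\infty(0)$, satisfies
\[
\osc_{B_\rho}\bigl(w_\infty(x)-Dw_\infty(0)\cdot x\bigr)\le 2C_0\rho^{1+\beta_1}
\]
for all $\rho\le 1/2$. Choose $\rho=\rho(p,n)\in(0,1/2]$ so small that $2C_0\rho^{1+\beta_1}\le\tfrac14\rho$, which is possible since $\beta_1>0$; this fixes $\rho$ depending only on $p,n$.

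Finally we derive the contradiction. For $k$ large, local uniform convergence $w_k\to w_\infty$ on $\overline{B_\rho}$ gives $\osc_{B_\rho}(w_k(x)-q'\cdot x)\le \osc_{B_\rho}(w_\infty(x)-q'\cdot x)+2\|w_k-w_\infty\|_{L^\infty(B_\rho)}\le\tfrac14\rho+2\|w_k-w_\infty\|_{L^\infty(B_\rho)}<\tfrac12\rho$ with the choice $q'=Dw_\infty(0)$, contradicting the standing assumption that $\osc_{B_\rho}(w_k(x)-q'\cdot x)>\tfrac12\rho$ for every $q'$. This proves the lemma with this $\rho=\rho(p,n)$ and a corresponding $\eps_0=1/k_0$. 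The main obstacle I anticipate is the passage to the limit in the equation when the combined gradient $Dw_k+q_k$ degenerates: one has to check that the $q_k$ stay in a compact set (or handle the case $|q_k|\to\infty$ separately, where the equation linearizes toward the infinity-Laplacian direction and one can still extract a limiting uniformly elliptic equation), and that viscosity sub/supersolution inequalities pass to the limit through the relaxed operator at gradient-vanishing points — this is precisely the point where the semicontinuous-envelope formulation in the definition above is needed, and where citing the stability theory for the normalized $p$-Laplacian (as in \cite{crandall1992user, juutinenlm01}) does the work.
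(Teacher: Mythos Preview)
Your approach is essentially the paper's: contradiction, compactness via Lemma~\ref{compactres}, identification of a homogeneous limit equation, then $C^{1,\beta}$ regularity of the limit to contradict the failure of flatness. The structure is right and the choice of $\rho$ is handled correctly.

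The genuine gap is exactly the one you flag at the end but do not resolve: the claim that ``the $q_k$ are bounded because $\|Dw_k+q_k\|$ stays controlled'' is unjustified and in fact false in general. In \eqref{rescpb} the vector $q$ is an arbitrary element of $\R^n$ (in the iteration of Lemma~\ref{lemiter} it is $q_k/r_k^{\alpha}$, which can blow up), and at this stage you only have $C^{0,\beta}$ control on $w_k$, not on $Dw_k$. So you cannot assume $q_k\to q_\infty$, and the body of your argument (which lands on the limiting equation \eqref{rescpb} with $f\equiv 0$ and then invokes Lemma~\ref{lip2}) does not cover the whole situation.

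The paper handles this by an explicit dichotomy. If $(q_k)$ is bounded, one passes to a subsequence and gets exactly your limit equation, and Lemma~\ref{lip2} gives $w_\infty\in C^{1,\beta_1}(B_{1/2})$ with a bound $C_0(p,n)$. If $|q_k|\to\infty$, one extracts $e_k=q_k/|q_k|\to e_\infty$ and shows (this is the stability argument you allude to) that the limit satisfies the constant-coefficient, uniformly elliptic equation
\[
-\tr\bigl((I+(p-2)\,e_\infty\otimes e_\infty)\,D^2 w_\infty\bigr)=0,
\]
for which $C^{1,\beta_2}$ interior regularity with a bound depending only on $p,n$ comes from \cite[Corollary~5.7]{caffarellicabrebook}. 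One then takes $\beta=\min(\beta_1,\beta_2)$ and chooses $\rho$ so that $C_0\rho^{\beta}\le 1/4$. Note that Lemma~\ref{lip2} does \emph{not} directly apply to this second limit (it is stated for \eqref{rescpb}, not for the constant-direction equation), so a separate regularity input is genuinely needed there. Once you split into these two cases and supply the second regularity result, your argument is complete and matches the paper's.
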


\begin{proof}
Thriving for a contradiction, assume that there exist a sequence of functions $(f_j)$ with $||f_j||_{L^\infty(B_1)} \rightarrow 0$, a sequence of vectors $(q_j)$ and a sequence of viscosity solutions $(w_j)$ with $\osc_{B_1}(w_j)\leq 1$ to
\begin{equation}
-\Delta w_j-(p-2) \left\langle D^2w_j\frac{D w_j+q_j}{\abs{D w_j+q_j}}, \frac{D w_j+q_j}{\abs{D w_j+q_j}}\right\rangle=f_j,
\end{equation}
such that, for all $q'\in \Rn$ and any $\rho\in(0,1)$
\begin{equation}\label{contra}
\underset{B_{\rho}}{\osc}(w_j(x)-q'\cdot x)>\frac{\rho}{2}.
\end{equation}
Using the compactness result of  Lemma \ref{compactres}, there exists a continuous function $w_{\infty}$ such that $w_j\to w_{\infty}$ uniformly in $B_\rho$ for any $\rho \in (0,1)$.
Passing to the limit in \eqref{contra}, we have that for any vector $q'$,
\begin{equation}\label{contrafin}
\underset{B_{\rho}}{\osc} (w_{\infty}(x)-q'\cdot x)>\dfrac{\rho}{2}.
\end{equation}

Suppose first that the sequence $(q_j)$ is bounded. Using  the result of Appendix \ref{appendixa1}, we extract a subsequence $(w_j)$ converging to a limit $w_{\infty}$, which satisfies
\[-\tr\left(\left(I+(p-2)\dfrac{Dw_\infty+q_\infty}{|Dw_\infty+q_\infty|}\otimes\dfrac{Dw_\infty+q_\infty}{|Dw_\infty+q_\infty|}\right )D^2 w_\infty\right) =0   \quad\text{in}\, B_1\]
in a viscosity sense. (Here $q_j\rightarrow q_\infty$ up to the same subsequence.) 
By the regularity result of Lemma \ref{lip2}, there exist $\beta_1=\beta_1(p,n)>0$ and $C_0=C_0(p,n)>0$ such that $\norm{w_\infty}_{C^{1,\beta_1}(B_{1/2})}\leq C_0.$

If the sequence $(q_j)$ is unbounded, we extract a converging subsequence from $e_j=\frac{q_j}{|q_j|}$, $e_j\rightarrow e_\infty$, and obtain (see Appendix \ref{appendixa2})
\begin{equation}\label{der2}
-\Delta w_{\infty}-(p-2) \left\langle D^2w_{\infty}\,e_{\infty}, e_{\infty}\right\rangle=0\qquad\text{in}\quad B_1,
\end{equation}
with $|e_{\infty}|=1$. Noticing that equation \eqref{der2} can be written as
\[
-\tr{((I+(p-2) e_{\infty}\otimes e_{\infty}) D^2w_\infty)}=0,
\]
we see that equation \eqref{der2} is uniformly elliptic and depends only on $D^2 w_\infty$. By the regularity result of \cite[Corollary 5.7]{caffarellicabrebook}, there is $\beta_2=\beta_2(p,n)>0$ so that $w_\infty\in C^{1,\beta_2}_\text{\text{loc}}$ and there exists $C_0=C_0(p,n)>0$ such that $\norm{w_\infty}_{C^{1,\beta_1}(B_{1/2})}\leq C_0$. 

We have shown that $w_\infty\in C^{1,\beta}_\text{\text{loc}}$ for $\beta=\min(\beta_1,\beta_2)>0$. Choose $\rho\in (0, 1/2)$ such that
\begin{equation}
C_0\rho^{\beta}\leq \frac{1}{4}.
\end{equation}
By $C^{1,\beta}_\text{\text{loc}}$ regularity, there exists a vector $k_{\rho}$ such that 
\begin{equation}
\underset{B_{\rho}}{\osc} (w_{\infty}(x)-k_{\rho}\cdot x)\leq C_0\rho^{1+\beta}\leq \frac{1}{4}\rho.
\end{equation}
This contradicts \eqref{contrafin} so the proof is complete.
\end{proof}
Proceeding by iteration, we obtain the following lemma. 

\begin{lemma}\label{lemiter}
Let $\rho$ and $\eps_0\in(0,1)$ be as in Lemma \ref{flatle} and let $u$ be a viscosity solution of \eqref{normpl} with $\osc_{B_1} (u)\leq1$ and $||f||_{L^\infty(B_1)}\leq \eps_0$. Then, there exists $\a\in(0,1)$ such that for all $k\in\N$, there exists $q_k\in \R^n$ such that
\begin{equation}\label{itera}
\underset{B_{r_{k}}}{\osc} \, (u(y)-q_{k}\cdot y)\leq  r_k^{1+\a },
\end{equation}
where $r_k:=\rho^k$.
\end{lemma}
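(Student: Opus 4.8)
The plan is to prove \eqref{itera} by induction on $k\in\N$, using Lemma \ref{flatle} repeatedly after rescaling. The base case $k=0$ is immediate: taking $q_0=0$ we have $\osc_{B_1}(u)\leq 1 = r_0^{1+\alpha}$. For the inductive step, assume \eqref{itera} holds for some $k$ with vector $q_k$. The key idea is to define the rescaled function
\[
w_k(x):=\frac{u(r_k x)-q_k\cdot(r_k x)}{r_k^{1+\alpha}},\qquad x\in B_1,
\]
so that by the inductive hypothesis $\osc_{B_1}(w_k)\leq 1$. A direct computation shows that $w_k$ is a viscosity solution in $B_1$ of an equation of the form \eqref{rescpb} with $q$ replaced by $q_k r_k^{-\alpha}$ and $f$ replaced by $f_k(x):=r_k^{1-\alpha}f(r_k x)$. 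Indeed, writing $v(x)=u(r_kx)$ one has $-\Delta^N_p v = r_k^2 f(r_k\cdot)$, and since $w_k = (v - q_kr_k\cdot x)r_k^{-1-\alpha}$, the second-order operator scales correctly while the right-hand side becomes $r_k^{2}\cdot r_k^{-1-\alpha} f(r_k\cdot) = r_k^{1-\alpha}f(r_k\cdot)$.

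The next step is to check the smallness of $f_k$. Since $r_k=\rho^k\le 1$ and $\alpha<1$, we get $\|f_k\|_{L^\infty(B_1)}=r_k^{1-\alpha}\|f\|_{L^\infty(B_{r_k})}\leq \|f\|_{L^\infty(B_1)}\leq\eps_0$, so the hypotheses of Lemma \ref{flatle} are satisfied by $w_k$. Applying Lemma \ref{flatle}, there exists $q'\in\R^n$ with
\[
\osc_{B_\rho}(w_k(x)-q'\cdot x)\leq \tfrac12\rho.
\]
Unwinding the definition of $w_k$ and writing this back in terms of $u$, with $y=r_k x$ ranging over $B_{r_{k+1}}$ (recall $r_{k+1}=\rho r_k$), we obtain
\[
\osc_{B_{r_{k+1}}}\bigl(u(y)-(q_k+r_k^\alpha q')\cdot y\bigr)\leq \tfrac12\rho\, r_k^{1+\alpha}.
\]
Setting $q_{k+1}:=q_k+r_k^\alpha q'$, the induction closes provided $\tfrac12\rho\, r_k^{1+\alpha}\leq r_{k+1}^{1+\alpha}=\rho^{1+\alpha}r_k^{1+\alpha}$, i.e. provided $\tfrac12\rho\le\rho^{1+\alpha}$, i.e. $\rho^{\alpha}\le\tfrac12$. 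This is where $\alpha$ is fixed: choose $\alpha=\alpha(p,n)\in(0,1)$ small enough that $\rho^{\alpha}\leq\tfrac12$ (equivalently $\alpha\le \log 2/\log(1/\rho)$), with $\rho$ the exponent from Lemma \ref{flatle}.

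The main obstacle — and the only genuinely non-bookkeeping point — is verifying that $w_k$ really does solve an equation of the type \eqref{rescpb}, i.e.\ that the structure of the normalized $p$-Laplacian is preserved under the affine subtraction $u\mapsto u-q_k\cdot x$ and the parabolic-type rescaling, including a careful treatment of the set where the relevant gradient vanishes (so that one stays within the viscosity framework of the Definition, using the semicontinuous envelopes). One must observe that subtracting the linear term $q_k\cdot x$ shifts the gradient argument of $\Delta^N_\infty$ by $q_k$, which is exactly the modification already built into \eqref{rescpb}; the $1/|Dw+q|$ normalization is scale-invariant, so the rescaling only affects the zeroth-order (source) term, as computed above. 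Once this identification is in place, everything else is elementary algebra with the exponents, and iterating the resulting oscillation decay — together with the polynomial-approximation characterization of $C^{1,\alpha}$ recalled earlier — yields Theorem \ref{thm:main1}, with the constant $C$ coming from the rescaling normalization $\kappa$ and hence depending on the stated quantities.
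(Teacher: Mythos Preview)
Your argument is essentially identical to the paper's: same rescaling $w_k$, same application of Lemma \ref{flatle}, same choice $q_{k+1}=q_k+r_k^{\alpha}q'$. There is one algebraic slip: from $\tfrac12\rho\le\rho^{1+\alpha}$ you should get $\rho^{\alpha}\ge\tfrac12$, not $\rho^{\alpha}\le\tfrac12$ (and indeed your ``equivalent'' condition $\alpha\le \log 2/\log(1/\rho)$ corresponds to $\rho^{\alpha}\ge\tfrac12$, matching the paper's choice $\rho^{\alpha}>1/2$); once that inequality is flipped the induction closes exactly as in the paper.
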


\begin{proof}
For $k=0$, the estimate \eqref{itera} follows from the assumption $\osc_{B_1}(u) \leq 1$. Next we take $\alpha\in (0,1)$ such that $\rho^{\alpha}>1/2$.
We assume for $k\geq 0$ that we already constructed $q_k\in \R^n$ such that \eqref{itera} holds true. 
To prove the inductive step $k\rightarrow k+1$, we rescale the solution considering for $x\in B_1$
$$w_k(x)=r_k^{-1-\alpha}\big(u(r_k x)-q_k\cdot (r_k x)\big).$$
By induction assumption, we have $\underset{B_1}{\osc}\,(w_k)\leq 1$, and $w_k$ satisfies 
$$-\Delta w_k-(p-2) \left\langle D^2w_k\frac{D w_k+(q_k/r_k^{\alpha})}{\abs{D w_k+(q_k/r_k^{\alpha})}}, \frac{D w_k+(q_k/r_k^{\alpha})}{\abs{D w_k+(q_k/r_k^{\alpha})}}\right\rangle=f_k,  $$
where  $f_k(x)=r_k^{1-\alpha}f(r_k x)$ with $\norm{f_k}_{L^{\infty}(B_1)}\leq \eps_0$ since $\alpha<1$.
Using the result of Lemma \ref{flatle}, there exists $l_{k+1}\in\R^n$ such that
$$\underset{B_{\rho}}{\osc}\,(w_k(x)-l_{k+1}\cdot x)\leq \frac{1}{2}\rho.$$   
Setting $q_{k+1}=q_k+ l_{k+1} r_k^{\a}$, we get
\[
\underset{B_{r_{k+1}}}{\osc} \, (u(x)-q_{k+1}\cdot x)\leq \dfrac{\rho}{2} r_k^{1+\a }\leq r_{k+1}^{1+\a}.\qedhere
\]
\end{proof}

Since the estimate \eqref{itera} holds for every $k$, the proof of Theorem \ref{thm:main1} is complete.\\

The rest of the section is devoted to the proof of Lemma \ref{lip2}. First we need the following technical lemma concerning Lipschitz regularity of solutions of equation \eqref{rescpb} in the case $f\equiv 0$. For $n\times n$ matrices we use the matrix norm
\[
||A||:=\sup_{|x|\leq 1}\{|Ax|\}.
\] 

\begin{lemma}\label{lip1}
 	Assume that $f\equiv0$ and let $w$ be a viscosity solution to equation \eqref{rescpb} with $\underset{B_1}{\osc{w}}\leq 1$. For all $r\in (0,\frac 34)$, there exists a constant $ Q=Q(p,n)>0$ such that, if $|q|>Q$, then for all  $x,y\in B_r$,
 	\begin{equation}
 	\begin{split}
 	\abs{w(x)-w(y)}\le \tilde C\abs{x-y},
 	\end{split}
 	\end{equation}
 	where  $\tilde{C}=\tilde C(p,n)>0$.
 \end{lemma}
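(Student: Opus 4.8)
I want to prove that when the "shift" $|q|$ is large, the solution $w$ of \eqref{rescpb} is Lipschitz, with constant depending only on $p,n$. The point is that when $|q|$ is huge compared to $\osc_{B_1}(w)\le 1$, the quantity $Dw+q$ is uniformly close to $q$ in direction, so the operator in \eqref{rescpb} is a genuinely uniformly elliptic, essentially constant-coefficient operator — no gradient degeneracy can occur. The natural approach is the classical Bernstein-type / Ishii–Lions method for viscosity solutions: double the variables and estimate the difference $w(x)-w(y)$ against a concave modulus $\omega(|x-y|) = L|x-y| - \omega_0|x-y|^2$ (truncated), using a carefully chosen penalization.

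First I would set up the auxiliary function
\[
\Phi(x,y) = w(x)-w(y) - L\phi(|x-y|) - \frac{M}{2}\bigl(|x-x_0|^2+|y-x_0|^2\bigr),
\]
on $B_{r'}\times B_{r'}$ for a slightly larger radius $r'\in(r,3/4)$, where $\phi(s)=s-\omega_0 s^{\gamma}$ (with $\gamma\in(1,2]$, or simply $\phi(s)=s-\omega_0 s^2$) is concave and increasing on the relevant range, $M$ is large enough (depending on $\osc w\le1$ and $r,r'$) to force any interior maximum away from the boundary, and $L$ is the Lipschitz constant to be determined. Assume for contradiction that $\Phi$ has a positive maximum at some interior point $(\bar x,\bar y)$ with $\bar x\neq\bar y$. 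Then I apply the theorem on sums (Crandall–Ishii lemma) to produce matrices $X,Y\in S^n$ with $X\le Y + (\text{error})$ and with $(\zeta_x, X)$, $(\zeta_y,Y)$ in the appropriate closures of the semijets of $w$ at $\bar x$, $\bar y$, where $\zeta_x = L\phi'(|\bar x-\bar y|)\hat e + M(\bar x - x_0)$ and similarly $\zeta_y$, with $\hat e = (\bar x-\bar y)/|\bar x-\bar y|$.

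The crux is the following: because $|q|$ is large and $|\zeta_x|, |\zeta_y|$ are controlled (here $\phi'\approx 1$, and $M|\bar x - x_0|$ is bounded since we're in a fixed ball — $|q|>Q$ will be chosen larger than these bounds), the vectors $\zeta_x + q$ and $\zeta_y + q$ are nonzero; hence the viscosity inequalities for the subsolution and supersolution parts are the "non-degenerate" ones, giving
\[
-\tr\!\Bigl(\bigl(I+(p-2)\hat q_x\otimes\hat q_x\bigr)X\Bigr)\le 0,\qquad
-\tr\!\Bigl(\bigl(I+(p-2)\hat q_y\otimes\hat q_y\bigr)Y\Bigr)\ge 0,
\]
where $\hat q_x=(\zeta_x+q)/|\zeta_x+q|$, $\hat q_y=(\zeta_y+q)/|\zeta_y+q|$. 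Subtracting, and using that $\hat q_x$ and $\hat q_y$ are both within $O(1/|q|)$ of $\hat q:=q/|q|$, I would replace both matrix coefficients by $A:=I+(p-2)\hat q\otimes\hat q\in\mathcal A_{\lambda,\Lambda}$ at the cost of an error of order $\|X\|+\|Y\|$ times $1/|q|$. From the structure of the matrix inequality coming from the theorem on sums — tested against vectors of the form $(\hat e,\hat e)$ and against the directions where $\phi$ is strictly concave — one extracts the standard estimate: $\tr(A(X-Y))$ is bounded above by a negative quantity of size $\sim L\,\omega_0\,|\phi''|$ coming from the concavity of $\phi$, i.e. $\le -c L/|\bar x-\bar y|^{2-\gamma}$ (plus $O(M)$), while the error terms are $\le C(\|X\|+\|Y\|)/|q| + C M$. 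Since the theorem on sums also bounds $\|X\|,\|Y\|$ by something like $C(L\,\phi''\text{-terms}+M)$, choosing first $\omega_0$ (hence the concavity gain), then $Q$ large (to kill the $1/|q|$ error), then $L$ large (to dominate the $O(M)$ terms) yields $0 < \tr(A(X-Y)) + \text{errors} < 0$, a contradiction. Hence $\Phi\le 0$, which gives $w(\bar x)-w(\bar y)\le L|x-y|$ on $B_r$ after absorbing the quadratic penalization, i.e. the claimed Lipschitz bound with $\tilde C = \tilde C(p,n)$.

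**Main obstacle.** The delicate point is the bookkeeping that makes all constants depend only on $p$ and $n$: one must verify that the Lipschitz constant $L=\tilde C$ can be fixed first (depending only on the ellipticity $\lambda,\Lambda$, i.e. on $p,n$, and on the geometry of the fixed balls), and only afterwards choose the threshold $Q$ — so that the smallness $1/|q|$ of the coefficient perturbation is exploited to absorb terms that would otherwise be proportional to $L$. Equivalently, the argument must be arranged so that the ``bad'' error term is $C\tilde C/|q|$ rather than $C\tilde C$; this is exactly why the hypothesis is $|q|>Q$ with $Q$ allowed to depend on $\tilde C$. Keeping track of the dependence of $\|X\|,\|Y\|$ on $\tilde C$ and on $|\bar x-\bar y|$ in the theorem-on-sums estimate, and confirming the concavity term genuinely dominates, is the technical heart of the proof; everything else is the routine Ishii–Lions machinery.
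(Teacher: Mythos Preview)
Your overall plan is the right one (Ishii--Lions doubling), but there is a genuine gap in the way you propose to close the contradiction, and it is precisely at the point you flag as the ``main obstacle''.

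You write that the coefficient error will be of size $C(\|X\|+\|Y\|)/|q|$ and that the theorem on sums bounds $\|X\|,\|Y\|$ ``by something like $C(L\,\phi''\text{-terms}+M)$''. This last bound is incorrect: from the matrix inequality one only gets $\|X\|,\|Y\|\le C\|B\|+\tau$, and $\|B\|$ contains the term $L\phi'(|\bar x-\bar y|)/|\bar x-\bar y|\sim L/|\bar x-\bar y|$. Hence the error you must absorb is of order $L/(|q|\,|\bar x-\bar y|)$, whereas the concavity gain is at best $cL\,|\phi''(|\bar x-\bar y|)|\sim cL\,|\bar x-\bar y|^{\gamma-2}$. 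For any $\gamma\in(1,2]$ the ratio of error to gain is $\gtrsim |\bar x-\bar y|^{1-\gamma}/|q|$, which blows up as $\bar x\to\bar y$ no matter how large a fixed $Q$ you choose. So making $|q|$ large does \emph{not} kill this term, and your ordering ``fix $\omega_0$, then $Q$, then $L$'' (or any permutation) cannot close.

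What is missing is the use of the a~priori H\"older estimate for $w$ (Lemma~\ref{compactres}). In the paper's argument one first records $|w(x)-w(y)|\le C_\beta|x-y|^\beta$, and at the maximum this forces the localization terms to be small \emph{with a power of} $|\bar x-\bar y|$: $M|\bar x-x_0|,\,M|\bar y-y_0|\le C_\beta|\bar x-\bar y|^{\beta/2}$. Consequently $|a-b|\le 2C_\beta|\bar x-\bar y|^{\beta/2}$ and, since $|q|>Q$ merely guarantees $|\eta_i|\ge L/4$, one gets $|\bar\eta_1-\bar\eta_2|\le (8C_\beta/L)|\bar x-\bar y|^{\beta/2}$. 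Multiplying by $\|Y\|\sim L/|\bar x-\bar y|$ the $L$'s cancel and the error becomes $\lesssim C_\beta|\bar x-\bar y|^{\beta/2-1}$; choosing $\gamma=1+\beta/2$ makes the concavity gain $\sim L|\bar x-\bar y|^{\beta/2-1}$ of the \emph{same} homogeneity, and then taking $L$ large (depending only on $p,n,C_\beta$) yields the contradiction. In short: the largeness of $|q|$ is used only to avoid the singularity $Dw+q=0$, not to make the coefficient error small; the smallness comes from H\"older continuity and the matched choice of $\gamma$. Adding this ingredient, your sketch becomes exactly the paper's proof.
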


\begin{proof}
	We use the viscosity method introduced by Ishii and Lions in \cite{ishiilions}.
	\subsection*{Step 1}		 
It suffices to show that $w$ is Lipschitz in $B_{3/4}$, because this will imply that $w$ is Lipschitz in any smaller ball $B_\rho$ for $\rho\in \left(0,\frac34\right)$  with the same Lipschitz constant.
Take  $r=\frac 45$. First we fix $x_0, y_0\in B_{\frac{15r}{16}}$, where now $\frac{15 r}{16}=\frac34$, and introduce the auxiliary function 
	$$\Phi(x, y):=w(x)-w(y)-L\phi(\abs{x-y})-\frac M2\abs{x-x_0}^2-\frac M2\abs{y-y_0}^2,$$
	where $\phi$ is defined below. Our aim is to show that $\Phi(x, y)\leq 0$ for $(x,y)\in B_r\times B_r$.
	For a proper choice of $\phi$, this yields the desired regularity result.
	We take
	\[
	\begin{split}
	\phi(t)=
	\begin{cases}
	t-t^{\gamma}\phi_0& 0\le t\le t_1:=(\frac 1 {\gamma\phi_0})^{1/(\gamma-1)}  \\
	\phi(t_1)& \text{otherwise},
	\end{cases}
	\end{split}
	\]
	where $2>\gamma>1$ and $\phi_0>0$ is such that  $t_1\geq 2 $ and $\gamma \phi_02^{\gamma-1}\leq 1/4$. 
	
	Then
	\[
	\begin{split}
	\phi'(t)&=\begin{cases}
	1-\gamma t^{\gamma-1}\phi_0 & 0\le t\le t_1 \\
	0& \text{otherwise},
	\end{cases}\\
	\phi''(t)&=\begin{cases}
	-\gamma(\gamma-1)t^{\gamma-2} \phi_0& 0< t\le t_1 \\
	0 & \text{otherwise}.
	\end{cases}
	\end{split}
	\]
	In particular, $\phi'(t)\in  [\frac34,1]$ and $\phi''(t)<0$ when $t\in [0,2]$.
	
	\subsection*{Step 2} We argue by contradiction and assume that
	$\Phi$ has a positive
	maximum at some point $(x_1, y_1)\in \bar B_r\times \bar B_r$.
	Since $w$ is  continuous and
	its oscillation is bounded by 1, we get
	\begin{equation} 
	\begin{split}
	M\abs{x_1-x_0}^2\leq 2\osc_{B_1}{w}\leq 2,\\
	M\abs{y_1-y_0}^2\leq 2\osc_{B_1}{w}\leq 2.
	\end{split}
	\end{equation}
	Notice  that $x_1\neq y_1$, otherwise the maximum of $\Phi$ would be non positive.
	Choosing $M\geq\left(\dfrac{32}{r}\right)^2$, we have that $|x_1-x_0|< r/16$ and $|y_1-y_0|< r/16$ so that $x_1$ and $y_1$ are in $B_r$.
	
	We know that  $w$ is locally H\"older continuous and that  there exists a constant $C_{\beta}>0$ depending only on $p, n, r$ such that 
	$$|w(x)-w(y)|\leq C_\beta|x-y|^\beta \quad\text{for}\, x, y\in B_r.$$
	
	Using that $w$ is H\"older continuous, it follows, adjusting the constants (by  choosing $2M\leq C_{\beta}$), that  
	\begin{equation}\label{kulio}
	\begin{split}
	M\abs{x_1-x_0}\leq C_{\beta}&\abs{x_1-y_1}^{\beta/2},\\
	M\abs{y_1-y_0}\leq C_{\beta}&\abs{x_1-y_1}^{\beta/2}.
	\end{split}
	\end{equation}

		By  Jensen-Ishii's lemma (also known as theorem of sums, see \cite[Theorem 3.2]{crandall1992user}), there exist
		\[
		\begin{split}
		&(\tilde \zeta_x,X)\in \overline{\mathcal{J}}^{2,+}\left(w(x_1)-\frac M2\abs{x_1-x_0}^2\right),\\
		&( \tilde \zeta_y,Y)\in \overline{ \mathcal{J}}^{2,-}\left(w(y_1)+\frac M2\abs{y_1-y_0}^2\right),
		\end{split}
		\]
		that is 
		\[
		\begin{split}
		&(a,X+MI)\in \overline{\mathcal{J}}^{2,+}w(x_1),\\ &(b,Y-MI)\in \overline{\mathcal{J}}^{2,-}w(y_1),
		\end{split}
		\]
		where ($\tilde \zeta_x=\tilde \zeta_y$)
		\[
		\begin{split}
		a&=L\phi'(|x_1-y_1|) \frac{x_1-y_1}{\abs{x_1-y_1}}+M(x_1-x_0)=\tilde \zeta_x+M(x_1-x_0),\\
		b&=L\phi'(|x_1-y_1|) \frac{x_1-y_1}{\abs{x_1-y_1}}-M(y_1-y_0)=\tilde \zeta_y-M(y_1-y_0).
		\end{split}
		\]
		If $L$ is large enough (depending on the H\"older constant  $C_\beta$), we have
		\[
		\abs{a},\abs{b}\geq L\phi'(|x_1-y_1|) - C_\beta\abs{x_1-y_1}^{\beta/2}\ge \frac L2.
		\]
\noindent Moreover, by  Jensen-Ishii's lemma, for any $\tau>0$, we can take $X, Y\in \mathcal{S}^n$ such that 
		\begin{equation}\label{matriceineq1}
		- \big[\tau+2\norm{B}\big] \begin{pmatrix}
		I&0\\
		0&I 
		\end{pmatrix}\leq
		\begin{pmatrix}
		X&0\\
		0&-Y 
		\end{pmatrix}
		\end{equation}
		and
		\begin{align}\label{matineq2}
			\begin{pmatrix}
		X&0\\
		0&-Y 
		\end{pmatrix}
		\le 
		\begin{pmatrix}
		B&-B\\
		-B&B 
		\end{pmatrix}
		+\frac2\tau \begin{pmatrix}
		B^2&-B^2\\
		-B^2&B^2 
		\end{pmatrix}\\
		=D^2\phi (x_1, y_1)+\dfrac{1}{\tau} \left(D^2\phi(x_1, y_1)\right)^2,\nonumber
		\end{align}
		where 
		\begin{align*}	
		B=&L\phi''(|x_1-y_1|) \frac{x_1-y_1}{\abs{x_1-y_1}}\otimes \frac{x_1-y_1}{\abs{x_1-y_1}}\\
		&\quad +\frac{L\phi'(|x_1-y_1|)}{\abs{x_1-y_1}}\Bigg( I- \frac{x_1-y_1}{\abs{x_1-y_1}}\otimes \frac{x_1-y_1}{\abs{x_1-y_1}}\Bigg)
		\end{align*}	
		and 
		\begin{align*}	
		B^2=
		&\frac{L^2(\phi'(|x_1-y_1|))^2}{\abs{x_1-y_1}^2}\Bigg( I- \frac{x_1-y_1}{\abs{x_1-y_1}}\otimes \frac{x_1-y_1}{\abs{x_1-y_1}}\Bigg)\\
		&\quad +L^2(\phi''(|x_1-y_1|))^2 \frac{x_1-y_1}{\abs{x_1-y_1}}\otimes \frac{x_1-y_1}{\abs{x_1-y_1}}.
\end{align*}	
		Notice that  $\phi''(t)+\dfrac{\phi'(t)}{t}\geq 0$, $\phi''(t)\leq 0$ for $t\in (0,2)$ and hence
		\begin{equation}\label{lilou}
		\norm{B}\leq L \phi'(|x_1-y_1|),
		\end{equation}
		\begin{equation}\label{filou}
	 \norm{B^2}\leq L^2\left(|\phi''(|x_1-y_1|)|+\dfrac{\phi'(|x_1-y_1|)}{|x_1-y_1|}\right)^2.
		\end{equation}
	Moreover, for $\xi=\frac{x_1-y_1}{\abs{x_1-y_1}}$, we have
	
				\begin{equation*}
		\langle B\xi,\xi\rangle=L\phi''(|x_1-y_1|)<0, \qquad\langle B^2\xi,\xi\rangle=L^2(\phi''(|x_1-y_1|))^2.
		\end{equation*}
		Choosing $\tau=4L\left(|\phi''(|x_1-y_1|)|+\dfrac{\phi'(|x_1-y_1|)}{|x_1-y_1|}\right)$,  we have that for $\xi=\frac{x_1-y_1}{\abs{x_1-y_1}}$,
			\begin{align}\label{mercit}
		\langle B\xi,\xi\rangle +\frac2\tau \langle B^2\xi,\xi\rangle&=L\left(\phi''(|x_1-y_1|)+\frac2\tau L(\phi''(|x_1-y_1|))^2\right)\nonumber\\
		&\leq \dfrac{L}{2}\phi''(|x_1-y_1|)<0 .
		\end{align}
		In particular applying  inequalities \eqref{matriceineq1} and \eqref{matineq2} to any  vector $(\xi,\xi)$ with $\abs{\xi}=1$, we  have that $X- Y\leq 0$ and $\norm{X},\norm{Y}\leq 2\norm{B}+\tau$.  We refer the reader to \cite{ishiilions,crandall1992user} for details.\\
		Thus, setting $\eta_1=a+q$, $\eta_2=b+q$, we have for $\abs{q}$ large enough (depending only on $L$)
		\begin{align}\label{koivu}
		\abs{\eta_1}&\geq \abs{q}-\abs{a}\geq \frac{\abs{a}}{2}\geq \frac L4,\nonumber\\
		\abs{\eta_2}&\geq \abs{q}-\abs{b}\geq \frac{\abs{b}}{2}\geq \frac L4,
		\end{align}
		where $L$ will be chosen later on and $L$ will depend only on $p,n, C_\beta$.
		We write the  viscosity inequalities
		\begin{equation*}
		\begin{split}
		0&\leq  \tr (X+MI)+(p-2)\dfrac{\left\langle(X+MI) (a+q), (a+q)\right\rangle}{|a+q|^2}\\
		0&\geq  \tr(Y-MI)+(p-2) \dfrac{\left\langle(Y-MI) (b+q), (b+q)\right\rangle}{|b+q|^2}.
		\end{split}
		\end{equation*}
	 In other words 
		\begin{equation*}
		\begin{split}
		0&\leq  \tr (A(\eta_1)(X+MI))\\
		0&\leq  -\tr (A(\eta_2)(Y-MI))
		\end{split}
		\end{equation*}
		where  for $\eta \neq 0$ $\bar \eta=\dfrac{\eta}{|\eta |}$ and 
		\[A(\eta):= I+(p-2)\ol\eta\otimes \ol\eta.\]
		Adding the two inequalities, we get 
		\begin{equation*}
		0 \leq  \tr (A(\eta_1)(X+MI))
		-\tr (A(\eta_2)(Y-MI)).
		\end{equation*}
		It follows that 
		\begin{align}\label{gregory}
		0 \leq  &\underbrace{\tr (A(\eta_1)(X-Y))}_{(1)}
		+\underbrace{tr ((A(\eta_1)-A(\eta_2))Y)}_{(2)}\nonumber\\
		&+\underbrace{M\big[\tr (A(\eta_1))+\tr (A(\eta_2))}_{(3)} \big].
		\end{align}
		
\noindent\textbf{Estimate of (1)}.		Notice that all the eigenvalues of $X-Y$ are non positive. Moreover,  applying the previous matrix inequality \eqref{matineq2} to the vector $(\xi,-\xi)$ where $\xi:=\frac{x_1-y_1}{|x_1-y_1|}$  and using \eqref{mercit}, 
		we obtain
		\begin{align}\label{camille}
		\langle (X-Y) \xi, \xi\rangle&\leq 4\left(\langle B\xi,\xi\rangle+\frac2\tau\langle B^2\xi,\xi\rangle)\right)\nonumber \\
		&\leq 2 L\phi''(|x_1-y_1|)<0.
		\end{align}
		Hence  at least one of the eigenvalue of $X-Y$  that we denote by  $\lambda_{i_0}$ is   negative and smaller than $2 L\phi''(|x_1-y_1|)$. The eigenvalues of $A(\eta_1)$ belong to $[\min(1, p-1), \max(1, p-1)]$.	Using \eqref{camille}, it follows by \cite{theo} that 
		\begin{align*}  
		\tr(A(\eta_1) (X-Y))&\leq \sum_i \lambda_i(A(\eta_1))\lambda_i(X-Y)\\
		&\leq \min(1, p-1)\lambda_{i_0}(X-Y)\\
		&\leq 2\min(1, p-1) L \phi''(|x_1-y_1|).
		\end{align*}
		
\noindent\textbf{Estimate of (2)}.		We have
		\begin{align*}A(\eta_1)-A(\eta_2)&=(\ol\eta_1\otimes \ol\eta_1-\ol\eta_2\otimes \ol\eta_2)(p-2)\\
	&=[(\ol\eta_1-\ol\eta_2+\ol\eta_2)\otimes\ol\eta_1-\ol\eta_2\otimes(\ol\eta_2-\ol\eta_1+
	\ol\eta_1)](p-2)\\
	&=[(\ol\eta_1-\ol\eta_2)\otimes\ol\eta_1+\ol\eta_2\otimes\ol\eta_1
	-\ol\eta_2\otimes(\ol\eta_2-\ol\eta_1)-\ol\eta_2\otimes\ol\eta_1](p-2)\\
	&=[(\ol\eta_1-\ol\eta_2)\otimes\ol\eta_1
	-\ol\eta_2\otimes(\ol\eta_2-\ol\eta_1)](p-2).
		\end{align*}
		Hence,
		\begin{align*}
		\tr( (A(\eta_1)-A(\eta_2)) Y)&\leq n\norm{Y}
		\norm{A(\eta_1)-A(\eta_2)}  \\
		&\leq n\abs{p-2}\norm{Y}|\ol\eta_1-\ol\eta_2|\left( |\ol\eta_1|+|\ol\eta_2|\right)\\
		&\leq 2n\abs{p-2}\norm{Y}|\ol \eta_1-\ol\eta_2|.
		\end{align*}
		On one hand we have
		\begin{equation*}
		\begin{split}
		\abs{\ol \eta_1-\ol \eta_2}&=
		\abs{\frac{\eta_1}{\abs {\eta_1}}-\frac{\eta_2}{\abs {\eta_2}}}
\le \max\left( \frac{\abs{\eta_2- \eta_1}}{\abs{\eta_2}},\frac{ \abs{\eta_2- \eta_1}}{\abs{\eta_1}}\right)\\
		&\le \frac {8C_\beta}{ L}\abs{x_1-y_1}^{\beta/2},
		\end{split}
		\end{equation*}
		where we used \eqref{koivu} and \eqref{kulio}.\\
		
		On the other hand, by \eqref{matriceineq1}--\eqref{filou},
		\begin{align*}
		\norm{Y}&=\max_{\ol \xi} |\langle Y\ol \xi, \ol \xi\rangle|
		\le 2 |\langle B\ol \xi,\ol \xi \rangle|+\frac4\tau|\langle B^2\ol \xi,\ol \xi \rangle| \\
		&\leq 4L\left( \frac{\phi'(|x_1-y_1|)}{\abs{x_1-y_1}}+ |\phi''(|x_1-y_1|)|\right).
		\end{align*}
		Hence, remembering that $|x_1-y_1|\leq 2$, we end up with
	\begin{align*}
	 \tr( (A(\eta_1)-A(\eta_2)) Y)&\leq 128n\abs{p-2}C_\beta \phi'(|x_1-y_1|) \abs{x_1-y_1}^{-1+\beta/2}\\
	 &\quad+128n\abs{p-2}C_\beta |\phi''(|x_1-y_1|)|.
	 \end{align*}
		 \textbf{Estimate of (3)}. Finally, we have
		$$ M(\tr(A(\eta_1))+\tr(A(\eta_2)))\leq 2Mn\max(1, p-1).$$

		\subsection*{Step 3}
		
		Gathering the previous estimates with \eqref{gregory} and recalling the definition of $\phi$, we get 
		\begin{align*}
		0&\leq 128n\abs{p-2}C_\beta\left(\phi'(|x_1-y_1|)  \abs{x_1-y_1}^{\beta/2-1}+  |\phi''(|x_1-y_1|)|\right)\\
		&\quad+2\min(1, p-1) L \phi''(|x_1-y_1|)  + +2Mn\max(1, p-1)\\
		&\leq 128n\abs{p-2}C_\beta  \abs{x_1-y_1}^{\beta/2-1}+2nM\max(1, p-1)\\
		&\quad +128n\abs{p-2}C_\beta\gamma(\gamma-1)\phi_0\abs{x_1-y_1}^{\gamma-2}\\
		&\quad-2\min(1, p-1)  \gamma(\gamma-1)\phi_0L\abs{x_1-y_1}^{\gamma-2}.
		\end{align*}
		Taking $\gamma=1+\beta/2>1$ and choosing $L$ large enough depending on $p,n, C_\beta$, we get
		$$ 0\leq \dfrac{-\min(1, p-1)\gamma(\gamma-1)\phi_0}{200} L\abs{x_1-y_1}^{\gamma-2}<0,   $$
		which is  a contradiction. Hence, by choosing first $L$ such that 
		\begin{align*}
		0&>128n\abs{p-2}C_\beta \left(\phi'(|x_1-y_1|)  \abs{x_1-y_1}^{\beta/2-1}+ |\phi''(|x_1-y_1|)|\right)\\
		&\quad +\min(1, p-1) L \phi''(|x_1-y_1|)+2nM\max(1, p-1)
		\end{align*}
		and then taking $|q|$ large enough (depending on $L$, it suffices that $|q|> 6L >\frac32 |a|$ see \eqref{koivu}), we reach a contradiction and   hence $\Phi(x,y)\leq 0$ for $(x,y)\in B_r\times B_{r}$. The desired result follows since for $x_0,y_0\in B_{\frac{15r}{16}}$, we have $\Phi(x_0,y_0)\leq 0$, we get
		\[
		|w(x_0)-w(y_0)|\leq L\phi(|x_0-y_0|)\leq L|x_0-y_0|.
		\]
		Remembering that $\frac{15r}{16}=\frac{15\cdot 4}{16\cdot 5}=\frac34$, we get that $w$ is Lipschitz in $B_{\frac34}$.
\end{proof}

Finally, once we have a control on the Lipschitz norm of $w$, we can prove Lemma \ref{lip2}.\\

\noindent \emph{Proof of Lemma \ref{lip2}.}
Introducing the function $v(x):=w(x)+q\cdot x$, we notice that $v$ is  a viscosity solution to 
	\[-\Delta_p^N v=0\qquad\text{in}\, B_1,\]
	and thus also a viscosity solution to the homogeneous $p$-Laplace equation $\Delta_p v=0$, see \cite{juutinenlm01}. By the equivalence result first proved by \cite{juutinenlm01}, $v$ is a weak solution to the homogeneous $p$-Laplace equation. By the classical regularity result, there is $\beta_1=\beta_1(p,n)>0$ so that $v\in C^{1,\beta_1}_{\text{\text{loc}}}(B_1)$ and hence  also $w\in C^{1,\beta_1}_{\text{loc}}(B_1)$. 
	The main difficulty is to provide  $C^{1, \beta_1}$ estimates which are uniform with respect to $q$. 
	
	We notice that if $|q|$ is  large enough, then the equation satisfied by $w$ is uniformly elliptic and the operator is not discontinuous. Taking $Q$ from Lemma \ref{lip1} and assuming that $|q|>Q$, we know from Lemma \ref{lip1} that $|Dw(x)|$ is controlled by some constant $\tilde C$ depending only on $p,n$ and independent of $|q|$ for any $x\in B_{3/4}$. 	It follows that,   if $q$ satisfies
	\[|q|\geq\theta_0:=\max(Q, 2\tilde C)\geq 2\norm{Dw}_{L^{\infty}(B_{3/4})},\] then
	denoting $\nu:=\frac{1}{\abs{q}}$ and $e:=\frac{q}{\abs{q}}$, we have
	$$\frac 12 \leq |e|-|\nu Dw|\leq|\nu Dw+e|\leq\abs{e} +\abs{\nu Dw}\leq \frac32.$$
	Defining $$\Sigma (x):= (p-2) \dfrac{Dw(x)+q}{|Dw(x)+q|}\otimes\dfrac{Dw(x)+q}{|Dw(x)+q|},$$
	we note that \eqref{rescpb} can be rewritten  as
	$$-\tr(F(D^2 w,x) )=0,$$
	where $F:\mathcal{S}^n\times B_{3/4}\rightarrow \R$,
	$$F(M,x)=-\tr((I +\Sigma(x))M),$$
	is continuous.
	
	Since $Dw$ is H\"older continuous, 
	we can see this equation as a linear elliptic equation
	with  $C^{\alpha}$
	coefficients. The standard Calder\'on-Zygmund theory
	provides local $C^{2,\alpha}$
	regularity on $w$ (bootstrapping the argument gives even  $C^{\infty}$
	regularity on $w$).
	
	Moreover, since $v$ is a weak solution to the usual $p$-Laplacian, it follows that $w$ is a weak solution to 
	\begin{equation}\label{rftghy}
	-\operatorname{div} \left(| Dw +q|^{p-2} (Dw+q)\right)=0\quad \text{in}\quad B_{3/4}.
	\end{equation}
	Writing the weak formulation, we have that for any test function $\varphi\in C^{\infty}_0(B_{3/4})$
	\begin{equation}
	\int_{B_{3/4}}|Dw+q|^{p-2}(Dw+q)\cdot D\varphi \, dx=0.
	\end{equation}
	
	Fixing $k$, $1\leq k\leq n$, taking $\varphi_k=
	\dfrac{\partial \varphi}{\partial x_k}$ instead of $\varphi$ as a test function and integrating by parts, we obtain 
	$$\int_{B_{3/4}}(|Dw+q|^{p-2}(I+\Sigma(x)) \, Dw_k)\cdot D\varphi\, dx=0.$$
	Dividing  by $|q|^{p-2}$, we have for any function $\varphi\in C^\infty_0 (B_{3/4})$
	$$\int_{B_{3/4}}\left(|\nu Dw+e|^{p-2}(I+\Sigma(x))Dw_k\right)\cdot D\varphi\, dx=0.$$
	We conclude that $h:= w_k$ is a weak solution to the linear uniformly elliptic equation
	$$-\operatorname{div}( \mathbf{A}(x)\, Dh)=0,$$
	where $\mathbf{A}(x):=|\nu Dw(x)+e|^{p-2}(I+\Sigma(x))\in \mathcal{S}^n$ satisfies
	\begin{align*}
	\mathbf{A}(x)&\geq\min(1, p-1)\min\left(\left(\frac{ 3}{ 2}\right)^{p-2},\left(\frac 1 2\right)^{p-2}\right)I,\\
	\mathbf{A}(x)&\leq\max(1,p-1) \max\left(\left(\frac{ 3}{ 2}\right)^{p-2},\left(\frac 1 2\right)^{p-2}\right)I.
	\end{align*}
	Using the classical result of De Giorgi \cite{Degio57}  for uniformly elliptic equations with bounded coefficients  (see also \cite{moser60new}, \cite[Theorems 8.24, 12.1]{gilbargt01})
	we get that $h$ is locally H\"older continuous and 
	\begin{equation}
	[h]_{C^{0,\beta_1}(B_{1/2})}\leq C(p,n)\norm{h}_{L^2(B_{3/4})}
	\end{equation}
	for some $\beta_1=\beta_1(p,n)>0$.
	
	We conclude  that if $|q|> \theta_0=\theta_0(p,n)$ then there exist $\beta_1=\beta_1(n,p)>0$ and 
	$C=C(p,n, \norm{w}_{L^\infty(B_1)},\norm{D w}_{L^\infty(B_{3/4})})=  C_0(p,n)>0$ (see Lemma \ref{lip1})
	such that 
	$$[w]_{C^{1, \beta_1}(B_{1/2})}\leq C_0.$$
	
	If $|q|\leq \theta_0$, we have \[\underset{B_1}{\osc}\, v\leq \underset{B_1}{\osc w}+ 2|q|\leq 1+2\theta_0.\]
	It follows that 
	\[
	[w]_{C^{1, \beta_1}(B_{1/2})}\leq [v]_{C^{1, \beta_1}(B_{1/2})}+2|q|\leq C(p,n)\underset{B_1}{\osc v}+2\theta_0\leq C_0(p,n).\]\qed

\subsection{Second proof by using distributional weak theory}\label{section3.2}
In this part we establish a second method to show that viscosity solutions to \eqref{normpl} are in $C^{1, \a}_{\text{\text{loc}}}(\Omega)$, when $f\in L^{\infty}(\Om)\cap C(\Om)$ and $p\geq 2$. Recall that equation \eqref{usualpl} reads as
\[
-\Delta_p u=|D u|^{p-2} f\qquad\text{in}\quad\Omega.
\]
Since the exponent of the nonlinear gradient term is less than $p$ and $f\in L^\infty(\Omega)$, locally H\"older continuous weak solutions of \eqref{usualpl} are known to be of class $C^{1,\a}_{\text{loc}}$ for some $\alpha\in (0, 1)$, see \cite{Tolk84}. More precisely, if $u$ is a weak solution to \eqref{usualpl} in $B_{2r}$, then
\[
[u]_{C^{1,\a}(B_r)}\leq C=C\left(p,n,r, \norm{u}_{L^{\infty}(B_{2r})}, \norm{f}_{L^{\infty}(B_{2r})}\right).
\]

We know that in the case $p\geq 2$ viscosity solutions of \eqref{normpl} are viscosity solutions to \eqref{usualpl}, and our aim is to show that they are also weak solutions to \eqref{usualpl}. The next theorem holds for the more general case $f\in L^q(\Om)\cap C(\Om)$, where $q>\max(n,p/2)$, and will be useful not only in this subsection, but in Section \ref{chpt4} and Section \ref{chpt6} as well. Our proof cannot rely on uniqueness, see Example \ref{esim} below. Instead, we use a technique developed by Julin and Juutinen in \cite{julin2012new}. We point out that the uniqueness of viscosity solutions is known only  when $f$ is either 0 or has constant sign (see \cite{kamamik2012}). A detailed  discussion can be found in \cite{discut1, peresssw09} for the case of the  normalized infinity Laplacian.

\begin{theorem}\label{equivalence3.4} Assume that $p\geq 2$, $\max(n,p/2)<q\leq\infty$, and $f\in L^q(\Om)\cap C(\Om)$.
Let $u$ be a bounded viscosity solution to \eqref{normpl}. Then $u$ is a weak solution to \eqref{usualpl}.
\end{theorem}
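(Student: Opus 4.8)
The plan is to follow the technique of Julin and Juutinen \cite{julin2012new} for passing between the viscosity and the weak formulation, which deliberately avoids any appeal to a comparison principle (uniqueness genuinely fails here for sign-changing $f$, see Example~\ref{esim}). It is enough to prove that every bounded viscosity supersolution of \eqref{normpl} is a weak supersolution of \eqref{usualpl}: the subsolution statement then follows by the symmetric argument with sup-convolutions in place of inf-convolutions, and a function that is simultaneously a weak super- and subsolution of \eqref{usualpl} is a weak solution, since once $u$ is fixed the functional $\varphi\mapsto\int|Du|^{p-2}Du\cdot D\varphi-\int|Du|^{p-2}f\varphi$ is linear and vanishes on the cone of nonnegative test functions, hence on all of them. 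Recall that, wherever $Du\ne0$, $\Delta_p u=|Du|^{p-2}\Delta_p^N u$, so the divergence-form equation carries the gradient weight $|Du|^{p-2}$ on its right-hand side automatically; this is also where $p\ge2$ first enters, since then $|Du|^{p-2}$ stays bounded where $Du$ does.

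First I would regularize $u$ by inf-convolution: for small $\eps>0$ set
\[
u_\eps(x):=\inf_{y\in\Om}\Big(u(y)+\frac{|x-y|^2}{2\eps}\Big).
\]
Since $u$ is bounded and continuous, $u_\eps$ is locally Lipschitz, semiconcave (with $D^2u_\eps\le\eps^{-1}I$ in the distributional sense), increases to $u$ locally uniformly as $\eps\to0$, and — because the non-divergence operator $A(\xi):=I+(p-2)\overline\xi\otimes\overline\xi$, $\overline\xi=\xi/|\xi|$, has no explicit $x$-dependence — is a viscosity supersolution of $-\operatorname{tr}(A(Du_\eps)D^2u_\eps)=f_\eps$ in $\Om_\eps$ (with the relaxation at $Du_\eps=0$ as in the definition), where $f_\eps:=\inf_{B(\cdot,r_\eps)}f$ with $r_\eps\to0$ and $\Om_\eps\Subset\Om$ exhaust $\Om$; as $f\in C(\Om)$, $f_\eps\to f$ locally uniformly.

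Next I would show $u_\eps$ is actually a weak supersolution of $-\Delta_p u_\eps=|Du_\eps|^{p-2}f_\eps$ in $\Om_\eps$. By Alexandrov's theorem $u_\eps$ is twice differentiable a.e.; testing the viscosity inequality against the second-order Taylor polynomial at such points gives $-\operatorname{tr}(A(Du_\eps)D^2u_\eps)\ge f_\eps$ a.e.\ on $\{Du_\eps\ne0\}$, hence $-\Delta_p u_\eps\ge|Du_\eps|^{p-2}f_\eps$ a.e.\ there after multiplying by $|Du_\eps|^{p-2}\ge0$, while on $\{Du_\eps=0\}$ both $|Du_\eps|^{p-2}Du_\eps$ and $|Du_\eps|^{p-2}f_\eps$ vanish and contribute nothing. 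Because $u_\eps$ is semiconcave, the singular part of the distributional measure $\operatorname{div}(|Du_\eps|^{p-2}Du_\eps)$ is nonpositive, its absolutely continuous part being the a.e.-defined $\Delta_p u_\eps$, so the pointwise inequality upgrades to the distributional one. This measure-theoretic step, carried out via Alexandrov's theorem and the monotonicity structure exactly as in \cite{julin2012new,juutinenlm01}, is the technical heart of the proof and the place where uniqueness would otherwise be needed.

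Finally I would pass to the limit $\eps\to0$. On $\Om'\Subset\Om$ one has $\inf_\Om u\le u_\eps\le\sup_{\Om'}u$ and $f_\eps$ uniformly bounded, so a Caccioppoli estimate for the weak supersolutions $u_\eps$ yields a bound for $Du_\eps$ in $L^p_{\mathrm{loc}}(\Om)$ uniform in $\eps$ (the continuity and integrability hypotheses on $f$ serving to control the gradient-weighted source term uniformly in $L^1_{\mathrm{loc}}$). Thus, along a subsequence, $Du_\eps\rightharpoonup Du$ weakly in $L^p_{\mathrm{loc}}$ — the limit identified through $u_\eps\to u$ in $L^p_{\mathrm{loc}}$ — so $u\in W^{1,p}_{\mathrm{loc}}(\Om)$. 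To treat the nonlinear term I would upgrade this to strong convergence: testing the weak formulation for $u_\eps$ with $\eta^p(u-u_\eps)\ge0$, $\eta\in C_c^\infty$ a cutoff, and using that $u-u_\eps\to0$ in $L^p_{\mathrm{loc}}$ while $|Du_\eps|^{p-2}Du_\eps$ is bounded in $L^{p'}_{\mathrm{loc}}$, one gets $\limsup_\eps\int\eta^p\langle|Du_\eps|^{p-2}Du_\eps-|Du|^{p-2}Du,\,Du_\eps-Du\rangle\le0$, and the $p$-monotonicity inequality $\langle|a|^{p-2}a-|b|^{p-2}b,\,a-b\rangle\ge c_p|a-b|^p$, valid for $p\ge2$, then forces $Du_\eps\to Du$ in $L^p_{\mathrm{loc}}$. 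Consequently $|Du_\eps|^{p-2}Du_\eps\to|Du|^{p-2}Du$ in $L^{p'}_{\mathrm{loc}}$ and $|Du_\eps|^{p-2}f_\eps\to|Du|^{p-2}f$ in $L^1_{\mathrm{loc}}$, so letting $\eps\to0$ in the weak formulation of $u_\eps$ shows $u$ is a weak supersolution of \eqref{usualpl}; the same argument with the semiconvex sup-convolutions $u^\eps(x)=\sup_y(u(y)-|x-y|^2/(2\eps))$, which decrease to $u$ and are viscosity subsolutions of the analogous perturbed equations, gives that $u$ is a weak subsolution, and together these prove the theorem. The two delicate points are the passage from the a.e.\ pointwise inequality to the distributional one for the semiconcave $u_\eps$, and the upgrade from weak to strong $L^p_{\mathrm{loc}}$ convergence of the gradients — with $p\ge2$ entering decisively in both.
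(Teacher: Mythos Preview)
Your proposal is correct and follows essentially the same approach as the paper: inf-convolution, upgrade to a weak supersolution of the perturbed equation, a Caccioppoli-type bound giving uniform $L^p$ control of $Du_\eps$, strong $L^p_{\mathrm{loc}}$ convergence of gradients via testing with $\eta^p(u-u_\eps)$ and the $p$-monotonicity inequality, and passage to the limit. The only cosmetic difference is in the ``pointwise-to-distributional'' step for the semiconcave $u_\eps$: the paper mollifies the concave part $u_\eps-\tfrac{1}{2\eps}|x|^2$, integrates by parts for the smooth approximants, and uses Fatou's lemma together with the one-sided bound $-\Delta_p u_{\eps,j}\ge -C/\eps$ coming from $D^2u_{\eps,j}\le\eps^{-1}I$, whereas you invoke the sign of the singular part of $\operatorname{div}(|Du_\eps|^{p-2}Du_\eps)$ directly; these are two standard ways to execute the same Julin--Juutinen idea.
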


\begin{proof}
We will prove that a viscosity supersolution $u$ to \eqref{usualpl} is also a weak supersolution to \eqref{usualpl} (the
proof adapts to the  case of subsolutions with obvious changes). We need to show that 
\begin{equation*}
\int_{\Omega} |Du|^{p-2} Du\cdot D\varphi\, dx\geq \int_{\Omega}|Du|^{p-2} f\varphi\, dx,
\end{equation*}
where $\varphi\in C^{\infty}_0(\Omega)$.

\subsection*{Step 1: regularization.} Let us start by showing that the inf-convolution $u_\eps$ of $u$,  
\begin{equation}
u_{\eps}(x):=\underset{y\in\Omega}{\inf}\left( u(y)+\dfrac{|x-y|^2}{2\eps}\right),
\end{equation}
is a weak supersolution to
\begin{equation}\label{infconveq}
-\Delta_p u_\eps\geq |D u_\eps|^{p-2}f_{\eps} \qquad\text{in}\quad \Omega_{r(\eps)},
\end{equation}
where $$f_{\eps}(x)=\underset{|y-x|\leq 2\sqrt{\eps \osc_\Omega{u}}}{\inf}\, f(y)$$ and 
$$\Om_{r(\eps)}=\left\{x\ :\  \dist(x, \partial\Om)> 2\sqrt{\eps \osc_\Omega{u}}\right\}.$$

We recall some properties of inf-convolutions. For more general discussion and proofs, see the appendix of \cite{julin2012new}. First we mention that $u_{\eps}$ is a semi-concave viscosity supersolution to \eqref{infconveq}. Moreover, $u_{\eps}\in W^{1, \infty}_{\text{loc}}(\Omega_{r(\eps)})$ is  twice differentiable a.e and satisfies
\begin{align}
-\Delta_p u_{\eps}&=-|Du_{\eps}|^{p-2}\left(\Delta u_{\eps}+(p-2) D^2u_{\eps}\dfrac{Du_{\eps}}{|Du_{\eps}|}\cdot \dfrac{Du_{\eps}}{|D u_{\eps}|}\right)\nonumber\\
& \geq |Du_{\eps}|^{p-2} f_{\eps}
\end{align}
a.e. in $\Omega_{r(\eps)}$. Finally we mention that $u_{\eps}\to u$ locally uniformly and $||u_\eps||_{L^\infty(\Om_{r(\eps)})}\leq ||u||_{L^\infty(\Om)}$, see \cite{crandall1992user}. 

Since the function $\phi(x):=u_{\eps}(x)-\dfrac{1}{2\eps}|x|^2$ is concave in $\Omega_{r(\eps)}$, we can approximate it by a sequence $(\phi_j)$ of smooth concave functions by using standard mollification. Denoting $u_{\eps, j}:=\phi_j+\dfrac{1}{2\eps}|x|^2$, we can integrate by parts to obtain 
\begin{equation}\label{hep1}
\int_{\Om_{r(\eps)}} |Du_{\eps, j}|^{p-2} Du_{\eps,j}\cdot D\varphi\, dx=\int_{\Om_{r(\eps)}}(-\Delta_p u_{\eps,j})\varphi\, dx.
\end{equation}
Since $Du_{\eps}$ is locally bounded, the dominated convergence theorem implies
\begin{equation}\label{hep2}
\underset{j\to\infty}{\lim} \int_{\Om_{r(\eps)}}|Du_{\eps,j}|^{p-2} Du_{\eps,j}\cdot D\varphi\, dx=\int_{\Om_{r(\eps)}} |Du_{\eps}|^{p-2} Du_{\eps}\cdot D\varphi\, dx.
\end{equation}
Next, using the concavity of $u_{\eps, j}$ (we have $D^2u_{\eps, j}\leq \frac{1}{\eps} I$) and the local boundedness of $D u_{\eps,j}$, we get $$ -\Delta_p u_{\eps, j}\geq -\dfrac{C^{p-2}(n+p-2)}{\eps}$$ locally in $\Om_{r(\eps)}$.
Applying Fatou's lemma, we obtain
\begin{equation}\label{hep3}
\underset{j\to\infty}{\liminf}\int_{\Om_{r(\eps)}} (-\Delta_p u_{\eps,j})\varphi\,dx\geq \int_{\Om_{r(\eps)}} \underset{j\to\infty}{\liminf} (-\Delta_p u_{\eps,j})\varphi\, dx.
\end{equation}
Since
$$\liminf_{j\to\infty}(-\Delta_p u_{\eps, j}(x))=-\Delta_p u_{\eps}(x)$$
almost everywhere, by using \eqref{hep1}, \eqref{hep2} and \eqref{hep3} we obtain
\begin{align*}
\int_{\Om_{r(\eps)}} |Du_{\eps}|^{p-2} Du_{\eps}\cdot D\varphi\, dx &\geq \int_{\Om_{r(\eps)}}(-\Delta_p u_{\eps})\varphi\, dx\nonumber\\
& \geq \int_{\Om_{r(\eps)}}|Du_{\eps}|^{p-2} f_{\eps}\varphi\, dx.
\end{align*}
Hence, we have shown that $u_\eps\in W^{1,p}_{\text{loc}}(\Om_{r_\eps})$ is a weak  supersolution to \eqref{infconveq}.

\subsection*{Step 2: passing to the limit in the regularization.} Take an arbitrary test function $\varphi\in C^{\infty}_0(\Omega)$. We finish the proof by showing that
\begin{equation}\label{passlimit1}
\int_{\Om_{r(\eps)}} |Du_\eps|^{p-2} Du_\eps \cdot D\varphi\, dx\rightarrow \int_{\Om} |Du|^{p-2} Du \cdot D\varphi\, dx
\end{equation}
and
\begin{equation}\label{passlimit2}
\int_{\Om_{r(\eps)}}|Du_\eps|^{p-2} f_\eps \varphi\, dx \rightarrow \int_{\Om}|Du|^{p-2} f \varphi\, dx.
\end{equation}

Let $\Om''$ be the support of $\varphi$ and $\eps$ so small that $\Om''\subset \Om'\subset \subset \Om_{r(\eps)}$. We start by showing that $D u_\eps$ is uniformly bounded in $L^p(\Om')$. Take a compactly supported smooth cut-off function $\xi:\Om_{r(\eps)}\rightarrow [0,1]$ such that $\xi\equiv 1$ on $\Om''$ and such that the support of $\xi$ is included in $\Om'$. Choose the test function $(2L-u_{\eps})\xi^p$ in the weak formulation, where $L=\sup_{\Om'}|u_\eps|$. By using H\"older's inequality we obtain
\begin{eqnarray*}
\int_{\Om_{r(\eps)}} \xi^p|D u_{\eps}|^p\,dx &\leq&\int_{\Om_{r(\eps)}} |D u_{\eps}|^{p-2}(2L-u_{\eps})\xi^2\xi^{p-2}|f_{\eps}|\, dx\\
&\quad& +\ p\int_{\Om_{r(\eps)}} \xi^{p-1}|Du_{\eps}|^{p-2} Du_{\eps}\cdot D\xi  (2L-u_{\eps})\, dx \\
&\leq&1/4\int_{\Om_{r(\eps)}} \xi^p|D u_{\eps}|^p\,dx+ C(p) L^{p/2} \int_{\Om_{r(\eps)}}|f_\eps|^{p/2} \xi^p\,dx\\
&\quad& +\ C(p)\int_{\Om_{r(\eps)}} L^p|D\xi|^{p} dx +1/4\int_{\Om_{r(\eps)}} \xi^p|D u_{\eps}|^p\,dx.
\end{eqnarray*}
It follows that 
\begin{align}\label{vaartyu}
\int_{\Om_{r(\eps)}} \xi^p|D u_{\eps}|^p\,dx &\leq C(p) L^{p/2} \int_{\Om_{r(\eps)}}|f_\eps|^{p/2} \xi^p\,dx
 +\ C(p)\int_{\Om_{r(\eps)}} L^p|D\xi|^{p} dx\nonumber\\
 &\leq C= C\left(p,n, \norm{u}_{L^\infty(\Om)}, \norm{f}_{L^q(\Om)}\right). 
\end{align}
Hence, $D u_{\eps}$ is uniformly bounded with respect to $\eps$ in $L^p(\Om')$. It follows that there exists a subsequence such that $D u_{\eps}\to D u$ weakly in $L^{p}(\Om')$, and we can also show that $D u_{\eps}\to D u$ strongly in $L^{p}(\Om')$. Indeed, taking this time the test function $(u-u_{\eps})\xi^p$, we estimate

\begin{align*}
-\int_{\Om_{r(\eps)}} \xi^p|D u_{\eps}|^{p-2}D u_{\eps}\cdot(Du-Du_{\eps})\,dx &\leq\int_{\Om_{r(\eps)}}
 |D u_{\eps}|^{p-2}(u-u_{\eps})\xi^{p}|f_{\eps}|\, dx\\
&+p\int_{\Om_{r(\eps)}} \xi^{p-1}|Du_{\eps}|^{p-1}
| D\xi | (u-u_{\eps})\, dx.
\end{align*}

\noindent Adding $\int_{\Omega_{r(\eps)}}|Du|^{p-2} Du\cdot (Du- Du_{\eps})\xi^p\, dx$ to this inequality and recalling that  for $p>2$
$$(|a|^{p-2}a- |b|^{p-2}b)\cdot (a-b)\geq C(p) |a-b|^p,$$
we get 
\begin{align}
C(p)\int_{\Om_{r(\eps)}} |Du -Du_{\eps}|^p\xi^p\, dx
&\leq \norm{u-u_{\eps}}_{L^\infty(\Om')}\norm{D u_{\eps}\xi}^{p-2}_{L^p(\Om')}\norm{f_\eps \xi}_{L^{p/2}(\Om')}\nonumber\\
&+\ p\norm{u-u_{\eps}}_{L^\infty(\Om')}\norm{D u_{\eps}\xi}^{p-1}_{L^p(\Om')} \norm{D \xi}_{L^p(\Om')}\label{convpass}\nonumber\\
&+\ \int_{\Om_{r(\eps)}}|Du|^{p-2} Du\cdot (Du- Du_{\eps})\xi^p\, dx.\nonumber
\end{align}
By using the local uniform convergence of $u_{\eps}$ to $u$, the facts $Du\in L^{p}(\Om')$, $\norm{f_{\eps}}_{L^q(\Om')}\leq C(q,\Omega)\norm{f}_{L^q(\Omega)}$ and the weak convergence of $Du_{\eps}$ in $L^p(\Om')$, we obtain
\[
\int_{\Om_{r(\eps)}} |Du -Du_{\eps}|^p\xi^p\, dx\rightarrow 0,
\]
so $Du_{\eps}\to Du$ strongly in $L^p(\Om')$. 

Finally, we are ready to show that \eqref{passlimit1} and \eqref{passlimit2} hold. First we use the triangle inequality to obtain
\begin{align*}
&\left|\int_{\Om'}|Du_\eps|^{p-2} f_\eps \varphi\, dx- \int_{\Om'}|Du|^{p-2} f \varphi\, dx \right|\\
& \leq \left|\int_{\Om'}|D u_\eps|^{p-2}(f_\eps-f)\varphi\, dx\right|+\left|\int_{\Om'}( |D u_\eps|^{p-2}-|D u|^{p-2}) f \varphi\, dx\right|\\
& =:I_1+I_2.
\end{align*}
Using the generalized H\"older's inequality, we get
\[
I_1\leq ||D u_\eps||_{L^p(\Om')}||f_\eps-f||_{L^q(\Om)}||\varphi||_{L^\infty(\Om')}\leq C||f_\eps-f||_{L^{p/2}(\Om')}\rightarrow 0.
\]
To estimate $I_2$, notice first that since $f$ and $\varphi$ are continuous in $\Om'$, $f\varphi$ is bounded in $\Om'$. By using H\"older's inequality and the convexity of $\frac{p}{p-2}$ power function, we obtain
\begin{align*}
I_2 &\leq C\norm{|Du_\eps|^{p-2}-|D u|^{p-2}}_{L^{\frac{p}{p-2}}(\Om')}\\
& \leq C\left|\norm{D u_\eps}^p_{L^p(\Om')}-\norm{D u}^p_{L^p(\Om')}\right|^{\frac{p-2}{p}}\rightarrow 0,
\end{align*}
since $D u_\eps\rightarrow D u$ in $L^p(\Om')$. Hence, \eqref{passlimit2} holds, and by using the same argument, also \eqref{passlimit1} holds. The proof is complete. \qedhere
 \end{proof}

Finally, we give an example to show why we deliberately avoided using the uniqueness machinery. For similar counterexamples in the case of the infinity Laplacian, see \cite{crastaf15}.

\begin{example}\label{esim}
We give an example to show that for given continuous boundary data, there can be several weak solutions to equation \eqref{usualpl}. Let $f= (p-1)$.
Consider the 1-dimensional situation, where for $R\in [0,1]$ we  define a function
\[
\begin{split}
u(x)=\begin{cases}
C-C(\frac{{x+R}}{-1+R})^2 & x\in (-1,\textcolor{blue}{-}R)\\
 C& [-R,R]\\
 C-C(\frac{{x-R}}{1-R})^2&x\in (R,1).
\end{cases}
\end{split}
\]
Solving $C$ from
\[
\begin{split}
-(p-1)u''=(p-1)
\end{split}
\]
gives 
\[
\begin{split}
\frac{2C}{(-1+R)^2}=1 \text{ i.e. }C=\half(-1+R)^2.
\end{split}
\]
This gives different weak solutions  for the whole range of $R$. Indeed, assuming that $u\in W^{1,p}((-1,1))$, for any test function $\varphi\in C^{\infty}_0\left((-1,1)\right)$
\[
\begin{split}
\int_{-1}^{1} |u'|^{p-2}u'\varphi'\, dx&=-\int_{-1}^{-R} (x+R)^{p-1}\varphi'(x)\, dx+\int_R^1 (x-R)^{p-1}\varphi'(x)\,dx\\
&=(p-1)\Big(\int_{-1}^{-R} (x+R)^{p-2}\varphi(x)\, dx\\
&\quad +\int_R^1 (R-x)^{p-2}\varphi(x)\,dx\Big)\\
&=\int_{-1}^1 |u'|^{p-2}\varphi f\ dx.
\end{split}
\]
Only the largest i.e. $R=0$ is a solution to the original $-\Delta_p^N u=(p-1)$.

\noindent This counterexample also shows that in general weak solutions to \eqref{usualpl} are not necessary viscosity solutions to \eqref{normpl}.
\end{example}

\section{Uniform gradient estimates when $f\in C(\Om)\cap  L ^q(\Om)$}\label{chpt4}

In this section we assume that $p>2$, $f\in C(\Om)\cap L^q(\Om)$ for some $q>\max\left(n, \frac{p}{2}, 2\right)$. Our aim is to prove Theorem \ref{thm:main2}, which states that viscosity solutions of \eqref{normpl} are of class $C^{1,\alpha}_{\text{loc}}(\Omega)$ for some $\alpha =\alpha(p,q,n)$, and for any $\Omega''\subset \Om' \subset \subset \Omega$,
\[
[u]_{C^{1,\beta}(\Omega'')}\leq C=C\left(p,q,n,d,d'', ||u||_{L^\infty(\Om)},\norm{f}_{L^q(\Om)}\right),
\]  
where $d=\text{diam}\,(\Om)$ and $d''=\text{dist}\,(\Om'',\partial \Om')$.

Let\ $u$ be a viscosity solution of \eqref{normpl}.  From Lemma \ref{compactres}, we know that $u$ is locally of class $C^{0,\beta}$ for some $\beta=\beta(p,n)$.
From Section \ref{chpt3}, we know that $u$ is a weak solution to \eqref{usualpl} and passing to the limit in \eqref{vaartyu}, we know that for any $\Om'\subset\subset \Om$,
\begin{equation}\label{metrorome}
\norm{Du}_{L^p(\Om')}\leq C(p,n, \norm{u}_{L^\infty(\Om)}, \norm{f}_{L^q(\Om)}).
\end{equation}
 Moreover, for any $\lambda>0$ the function $u$ is a bounded viscosity solution to the following equation
\begin{equation}\label{eigennormpl}
-\Delta_p^N v(x)+\lambda v(x)=h(x):=f(x)+\lambda u(x),  \quad x\in \Omega.
\end{equation}
 Let $\Om'\subset\subset \Om$ with $\Om'$ smooth enough so that weak solutions to \eqref{usualpl} satisfy the boundary condition in a classical sense. In the sequel we fix small enough $\lambda=\lambda (p,n, \Omega')>0$ and  a viscosity solution $u$ of \eqref{normpl}. We take H\"older continuous functions $f_\eps\in C(\Om)\cap L^q(\Om)$ such that $f_\eps$ converges uniformly to $f$ in $\Om'$  and $f_\eps$ converges to $f$ in $L^q(\Om')$. 
The idea for the proof of Theorem \ref{thm:main2} is to obtain uniform estimates for solutions $v_\eps$ to the following regularized problems
\begin{numcases}{}
 -\mathrm{div} \left(\left( | D v_{\eps}|^2+ \eps^2\right)^{(p-2)/2}  D v_{\eps} \right)=(|D v_{\eps}|^{2}+\eps^2)^{\frac{p-2}{2}}(h_\eps-\lambda v_\eps),  \quad x\in \Omega',\nonumber\\
v_{\eps}=u \qquad\qquad\qquad\hspace{40pt} x\in\partial\Om',\label{regutpb}
\end{numcases}
where  $h_\eps=f_\eps+\lambda u$. Notice that the right-hand side of the equation has a growth of power less than $p$ with respect to the gradient, and $h_\eps $ is  bounded. Since the regularized equations are uniformly elliptic with smooth coefficients, in Step 1 we notice that $v_{\eps}\in C^{1,\beta(\eps)}_{\text{loc}}(\Om')\cap W^{2,2}_{\text{loc}}(\Om')$. In the next two steps we obtain uniform estimate for the norm $\norm{D v_{\eps}}_{L^p(\Omega')}$ and local Lipschitz estimate for $v_\eps$. Once we know that $v_{\eps}$ and  $|D v_{\eps}|^{p-2}$ are locally uniformly bounded, in Step 4 we use the regularity result of Lieberman \cite{Lie93} to get a local uniform H\"older estimate for the gradient $D v_\eps$. By using the equicontinuity of $(D v_\eps)$, we obtain a subsequence $(v_\eps)$ converging to a viscosity solution $v$ of equation \eqref{eigennormpl} in $C^{1,\alpha}_\text{\text{loc}}(\Omega')$ when $\eps\rightarrow 0$. 

\noindent For $\lambda>0$ and a given continuous boundary data, uniqueness for viscosity solutions of \eqref{eigennormpl} is easy to prove. By using uniqueness, we conclude in Step 5 that the function $v$ is the unique viscosity  solution to $\eqref{eigennormpl}$ with given boundary data $u$. Since $u$ is a solution to \eqref{eigennormpl}, we get that $u=v$. This gives a proof for Theorem \ref{thm:main2}.\\

\noindent \textbf{Step 1: Local $C^{1,\beta}$ regularity for $v_\eps$}
Let $v_\eps\in W^{1,p}(\Om')$ be a weak solution of the regularized problem \eqref{regutpb}. Since $p-2<p$ and $h_\eps\in L^q(\Om')$ with $q>n/2$, regularity theory implies that the solutions $v_{\eps}$ are  bounded and locally H\"older continuous. This follows from the Sobolev embedding for $p>n$ and from \cite[Theorems 7.1,7.2, Chapter 4 p.\, 286--290]{LU68} for $p\leq  n$. Since $h_\eps \in C(\overline{\Om'})$ is  bounded and the exponent on the gradient in the left term is less than $p$,  we also have $v_{\eps}\in C^{1,\a(\eps)}_{\text{loc}}(\Om')\cap W^{2,2}_{\text{loc}}(\Om')$ (see \cite[Theorem 8.7, chapter 4,  p.\,311]{LU68}, and also \cite{Dib83,Tolk84} for more general regularity results.) This observation is useful, since we will derive estimates for $D v_\eps$ by using test functions involving the derivatives of  $v_{\eps}$.\\

\noindent\textbf{Step 2: Uniform boundedness of $\norm{D v_{\eps}}_{L^p(\Om')}$ and  $\norm{ v_{\eps}}_{L^\infty(\Om')}$}
First we derive  a uniform bound for $\norm{D v_{\eps}}_{L^p(\Om')}$. Considering the weak formulation and taking $\varphi= v_{\eps}-u$ as a test function, we have
\begin{align*}
\int_{\Omega'}\left( |D v_{\eps}|^2+\eps^2\right)^{\frac{p-2}{2}} |D v_{\eps}|^2\,dx &\leq
\int_{\Omega'}(|D v_{\eps}|+\eps)^{p-2}|v_{\eps}-u||h_\eps|\, dx\\
&\quad +\int_{\Omega'}\left( |D v_{\eps}|^2+\eps^2\right)^{\frac{p-2}{2}}|D v_{\eps}\cdot D u|\,dx\\
&\quad +\lambda \int_{\Omega'} (|D v_{\eps}|+\eps)^{p-2}|v_\eps||v_{\eps}-u|\, dx\\
&\leq
\int_{\Omega'}(|D v_{\eps}|+\eps)^{p-2}|v_{\eps}-u||h_\eps|\, dx\\
&\quad +\int_{\Omega'}\left( |D v_{\eps}|^2+\eps^2\right)^{\frac{p-2}{2}}|D v_{\eps}||D u|\,dx\\
&\quad +\lambda \int_{\Omega'} (|D v_{\eps}|+\eps)^{p-2}|v_{\eps}-u|^2\, dx\\
&\quad +\lambda \int_{\Omega'} (|D v_{\eps}|+\eps)^{p-2}|v_{\eps}-u||u|\, dx.
\end{align*}
Using the inequality
$$\int_{\Omega'} |D v_{\eps}|^p\,dx\leq\int_{\Omega'}\left( |D v_{\eps}|^2+\eps^2\right)^{\frac{p-2}{2}} |D v_{\eps}|^2\,dx$$
together with Young's inequality and the previous estimate, we obtain
\begin{align}\label{com1}
\int_{\Omega'} |D v_{\eps}|^p\,dx &\leq 
\delta_0\int_{\Omega'} |D v_{\eps}|^{p}\, dx
 + C(p)\eps^p |\Omega'|+\int_{\Omega'}| D u|^p\,dx\nonumber\\
 &\quad+C(p)\int_{\Om}|v_{\eps}-u|^{p/2}|h_\eps|^{p/2}\, dx\nonumber\\
&\quad+\lambda C(p)\int_{\Omega'} |v_{\eps}-u|^p\, dx+C(p)\lambda \int_{\Omega'} |u|^p\, dx.
\end{align}
If $\lambda=\lambda(p,n, \Om')>0$ is small enough, then using the Sobolev embedding, we get 
\begin{eqnarray}
\int_{\Omega'} |D v_{\eps}|^p\,dx &\leq&
\delta_1\int_{\Omega'} |D v_{\eps}|^{p}\, dx+ C(p)\int_{\Om'}|v_{\eps}-u|^{p/2}|h_\eps|^{p/2}\, dx\nonumber\\
&\quad& +\delta_2\int_{\Omega'} |D v_{\eps}|^{p}\, dx+ C(p,n) \int_{\Om'}|D u|^p\,dx\\
&\quad&+C(p)\lambda \int_{\Omega'} |u|^p\, dx+C(p)\eps^p |\Omega'|.
\end{eqnarray}
Now we have to estimate $\int_{\Om'}|v_{\eps}-u|^{p/2}|h_\eps|^{p/2}\, dx$. 
We deal separately with the cases $p<n$, $p=n$ and  $p>n$.

\subsubsection*{Case $p < n$} We denote by $p^* = \dfrac{np}{n-p}$ the Sobolev's conjugate exponent of $p$. Using
Sobolev's and H\"older's inequalities and noticing that $\frac{p^*p}{2p^*-p}=\frac{np}{n+p}$, we get
\begin{eqnarray}\label{com2}
\int_{\Omega'}|v_{\eps}-u|^{\frac{p}{2}}|h_\eps|^{\frac{p}{2}}\, dx&\leq&
\norm{v_{\eps}-u}_{L^{p^*}(\Om')}^{\frac{p}{2}}
\left(\int_{\Om'}|h_\eps|^{\frac{p^*p}{2p^*-p}}\, dx\right)^{\frac{2p^*-p}{2p^*}}\nonumber\\
&\leq& C(p,n,|\Om'|)\norm{Dv_{\eps}-Du}_{L^{p}(\Om')}^{\frac{p}{2}}
\norm{h_\eps}_{L^{\frac{np}{n+p}}(\Om')}^{p/2}\nonumber\\
&\leq& \delta_3 \int_{\Omega'} |Dv_{\eps}-Du|^{p}\, dx+C(p,n,|\Om'|)\norm{h_\eps}_{L^{\frac{np}{n+p}}(\Om')}^{p}\nonumber\\
&\leq& \delta_4\int_{\Omega'} |Dv_{\eps}|^{p}\, dx+C(p,n,|\Om'|)\norm{Du}_{L^p(\Om')}^p\nonumber\\
&\quad& +C(p,n,|\Om'|)\norm{h_\eps}_{L^{\frac{np}{n+p}}(\Om')}^{p}.
\end{eqnarray}
Combining \eqref{com1} and \eqref{com2} and choosing $\delta_1+\delta_2+C(p)\delta_4=1/2$ in order to  absorb terms, we obtain  remembering the definition of the function $h_\eps$
\begin{align}
\norm{D v_{\eps}}_{L^{p}(\Om')}^p&\leq C(p,n,|\Om'|)\left(\norm{h_\eps}_{L^{\frac{np}{n+p}}(\Om')}^p+\int_{\Om'} (|D u|+1+ |u|)^p\,dx\right)\nonumber\\
&\leq  C(p,n,|\Om'|)\left(\norm{f}_{L^{\frac{np}{n+p}}(\Om')}^p+|\Om'|^{1+p/n}\norm{u}_{L^\infty (\Om)}^{p}\right)\nonumber\\
&\quad+C(p,n, |\Om'|)\int_{\Om'} (|D u|+1+ |u|)^p\,dx.\label{cacppet}
\end{align}

\subsubsection*{Case $p=n$}
First we calculate
\begin{align}\label{comet3}
\int_{\Omega'} |v_{\eps}-u|^{p/2}|h_\eps|^{p/2}\, dx&\leq\delta_5\norm{v_{\eps}-u}_{L^{p}(\Om')}^p+C(p)\norm{h_\eps}_{L^p(\Om')}^p\nonumber\\
&\leq\delta_5C(p,n,|\Om'|)\norm{Dv_{\eps}-Du}_{L^{p}(\Om')}^p+C(p)\norm{h_\eps}_{L^p(\Om')}^p\nonumber\\
&\leq\delta_6 \norm{Dv_{\eps}}_{L^{p}(\Om')}^p+ C(n,p,|\Om'|)\norm{Du}_{L^p(\Om')}^p\nonumber \\
&\quad+C(p)\norm{h_\eps}_{L^p(\Om')}^p.
\end{align}
Combing \eqref{com1} and \eqref{comet3} and choosing $\delta_1+\delta_2+C(p)\delta_6=1/2$, we obtain
\begin{align}\label{cacpn}
\norm{D v_{\eps}}_{L^{p}(\Om')}^p&\leq C(p,n,|\Om'|)\norm{h_\eps}_{L^{p}(\Om')}^p+C(p,n, \Om')\int_{\Om'} (|D u|+|u|+1)^p\,dx\nonumber\\
&\leq C(p,n,|\Om'|)\left(\norm{f}_{L^{n}(\Om')}^p+|\Om'|\norm{u}_{L^\infty(\Om')}^p\right)\nonumber\\
&\quad +C(p,n, \Om')\int_{\Om'} (|D u|+|u|+1)^p\,dx.
\end{align}

\subsubsection*{Case $p>n$}
First we calculate
\begin{align}
\int_{\Omega'} |v_{\eps}-u|^{p/2}|h_\eps|^{p/2}\, dx&\leq\delta_7
\norm{v_{\eps}-u}_{L^{\infty}(\Om')}^p+
C(p,n)\norm{h_\eps}_{L^{\frac{p}{2}}(\Om')}^{p}\nonumber\\
&\leq\delta_7 C(p,n,|\Om'|)\norm{Dv_{\eps}-Du}_{L^{p}(\Om')}^p\nonumber\\
&\quad +
C(p,n)\norm{h_\eps}_{L^{\frac{p}{2}}(\Om')}^p\nonumber\\
&\leq\delta_8 \norm{Dv_{\eps}}_{L^{p}(\Om')}^p+ C(p,n)\norm{h_\eps}_{L^{\frac{p}{2}}(\Om')}^p\nonumber\\
&\quad +C(p,n,|\Om'|)\norm{Du}_{L^p(\Om')}^p\label{com4}.
\end{align}
Combing \eqref{com1} and \eqref{com4} and choosing $\delta_1+\delta_2+C(p)\delta_8=1/2$, we get
\begin{align}\label{cacpg}
\norm{D v_{\eps}}_{L^{p}(\Om')}^p&\leq C(p,n,|\Om'|)\norm{h_\eps}_{L^{\frac{p}{2}}(\Om')}^p+C(p,n, |\Om'|)\int_{\Om'} (|D u|+1+|u|)^p\,dx\nonumber\\
 &\leq C(p,n,|\Om'|)\left(\norm{f}_{L^{\frac{p}{2}}(\Om')}^p+|\Om'|^2\norm{u}_{L^\infty(\Om')}^p\right)\nonumber\\
 &\quad+C(p,n, |\Om'|)\int_{\Om'} (|D u|+1+|u|)^p\,dx.
\end{align}
Once the boundedness of $\norm{Dv_\eps}_{L^p(\Om')}$ is proved, we can derive a uniform bound for $\norm{v_\eps}_{L^\infty(\Om')}$. Using the Sobolev embedding, in the case $p>n$ we get 
\begin{align*}
\norm{v_\eps}_{L^\infty (\Omega')}&\leq \norm{v_\eps-u}_{L^\infty(\Om')}+\norm{u}_{L^\infty(\Om')}\nonumber\\
&\leq C(n, \Om', p)\norm{D v_\eps-D u}_{L^p(\Om')}+\norm{u}_{L^\infty(\Om')}\\
&\leq C(p,n,|\Om'|)\left(\norm{f}_{L^{q}(\Om')}+\norm{u}_{W^{1,p}(\Om')}+ \norm{u}_{L^\infty(\Om')}+1\right).\nonumber
\end{align*}
For $p\leq n$, since $h_\eps\in L^q(\Om)$ for $q>\frac n2$, we can apply \cite[Theorem 7.1, Chapter 4]{LU68} giving an estimate for  $\norm{v_\eps}_{L^\infty(\Om')}$ when combined with the previous estimates of $\norm{Dv_\eps}_{L^p(\Om')}$. We get 
\begin{align*}
\norm{v_\eps}_{L^\infty (\Omega')}&\leq C\left( \norm{u}_{L^\infty(\Om)},p, n, |\Om'|, \norm{h_\eps}_{L^{q}(\Om')}, \norm{v_\eps}_{L^{p^*}(\Om')}\right)\nonumber\\
&\leq C\left( \norm{u}_{L^\infty(\Om')},p, n,q, |\Om'|, \norm{f}_{L^{q}(\Om')}, \norm{u}_{W^{1,p}(\Om')}\right),
\end{align*} 
where we also used the estimate 
\begin{align*}
\norm{v_\eps}_{L^{p^*}(\Om')}&\leq \norm{v_\eps-u}_{L^{p^*}(\Om')}+\norm{u}_{L^{p^*}(\Om')}\\
&\leq C(p,n, \Om')(\norm{u}_{L^\infty(\Om')}+\norm{u}_{W^{1,p}(\Om')}+\norm{v_\eps}_{W^{1,p}(\Om')}).
\end{align*}
In both cases $p\leq n$ and $p>n$, by using the estimate \eqref{metrorome} we get  
\begin{align}\label{lorop}
\norm{v_\eps}_{L^\infty (\Omega')}
&\leq C\left( \norm{u}_{L^\infty(\Om')},p, n,q, |\Om'|, \norm{f}_{L^{q}(\Om')}\right).
\end{align} 

\noindent\textbf{Step 3: Local uniform  Lipschitz estimate for $v_{\eps}$}\label{dmcheck}
In this subsection we derive a uniform local  gradient estimate for $v_{\eps}$ by combining \cite[Theorem 1.5]{DM2010} with the previous estimates \eqref{cacppet}-\eqref{lorop}. We follow the main steps of Duzaar and Mingione \cite{DM2010}. For the sake of completeness, we give some details of these steps.
We denote by $V(x):= h_\eps(x)-\lambda v_{\eps} (x)$. 
Then $v_\eps$ solves the equation 
\begin{equation*}
\left\{\begin{array}{ll}
 -\mathrm{div} \left(\left( | D v_{\eps}|^2+ \eps^2\right)^{(p-2)/2}  D v_{\eps} \right)=(|D v_{\eps}|^{2}+\eps^2)^{\frac{p-2}{2}}V,  & x\in \Omega',\\
v_{\eps}=u &x\in\partial\Om'.
\end{array}
\right.
\end{equation*}
The Duzaar-Mingione gradient estimate relies on the use of a  nonlinear potential of the function $|V|^2$ defined by 
 \begin{equation}
\mathcal{P}^V(x, R):=\int_0^R\left(\dfrac{|V|^2(B(x, \rho))}{\rho^{n-2}}\right)^{\frac{1}{2}}\, \dfrac{d\rho}{\rho},
\end{equation}
where 
$$|V|^2(B(x, \rho)):=\int_{B(x, \rho)} |V(y)|^2\, dy.$$

 Let us recall the main ingredients of the proof of the result of \cite{DM2010}.  A key step is to derive a Caccioppoli type estimate for the function $ (|Dv_{\eps}|^{2}+\eps^2)^{\frac{p}{2}}$ with a suitable remainder involving $|V|^2$. Relying on the regularity result of Step 1, this can be done by taking
\[
\varphi_{ij}(x):=\frac{\partial}{\partial x_j}\left(\eta(x)^2\left((|Dv_{\eps}(x)|^{2}+\eps^2)^{\frac{p}{2}}-k\right)_+ \frac{\partial v_\eps(x)}{\partial x_i}\right) 
\]
as test functions in the weak formulation, where $\eta$ is a non negative cut-off function. Next,  a modification of the De Giorgi techniques allowed them to  get pointwise estimate of $|Dv_{\eps}|^p$  in terms of the $L^{2p}$ norm of $Dv_{\eps}$ and the nonlinear potential $\mathcal{P}^{V}$. Finally, using interpolation, they improved the estimate in terms of the natural $L^p$ norm of the gradient and the $L^\infty$ norm of the nonlinear potential. 

Our approximation is slightly different, but the Caccioppoli type estimate of \cite[Lemma 3.1]{DM2010} (adapted for the new right hand side) holds for $2<p\leq n$ and also for $p>n$. Indeed, by using the weak formulation with the test function $\varphi_{ij}$ and integration by parts, there exists a constant $C=C(p,n)$ such that for any ball $B_R:=B(x,R)\subset \Om'$,
\begin{eqnarray*}
\int_{B_{\frac{R}{2}}}\left|D\left((|Dv_{\eps}|^{2}+\eps^2)^{\frac{p}{2}}-k\right)_+\right|^2\, dy\leq\dfrac{C}{R^2}\int_{B_R} \left((|Dv_{\eps}|^{2}+\eps^2)^{\frac{p}{2}}-k\right)^2_+\,dy\\
+C\int_{B_R}\left|\left(\eps^2+\norm{D v_{\eps}}_{L^{\infty}(B_R)}^2\right)^{(p-1)/2} V\right|^2\, dy.
\end{eqnarray*}
It follows that the oscillation improvement estimate \cite[Lemma 3.2]{DM2010} holds. Once we have such control on the level sets of $|D v_\eps|^p$, a standard modification of the De Giorgi iteration argument implies the following potential estimate (see for example \cite[Lemma 3.3]{DM2010}) 
\begin{eqnarray*}
\left(|Dv_{\eps}(x)|^{2}+\eps^2\right)^{\frac{p}{2}}&\leq& C\left(
\vint_{B_R} \left(|Dv_{\eps}|^{2}+\eps^2\right)^{p}\,dy\right)^{1/2}\\
&+&C\left(\eps^2+\norm{Dv_{\eps}}_{L^{\infty}(B_R)}^2\right)^{\frac{
p-1}{2}} \mathcal{P}^{V}(x, R),
\end{eqnarray*}
where $C=C(p,n)$. Proceeding as in  \cite{DM2010} we get for $R/2<\rho<r<R$,
\begin{align*}
\left(\norm{Dv_{\eps}}_{L^{\infty}(B_{\rho})}^{2}+\eps^2\right)^{\frac{p}{2}}&\leq C \dfrac{\left(\eps^2+\norm{Dv_{\eps}}_{L^{\infty}(B_r)}^2\right)^{\frac{p}{4}}}{(r-\rho)^{n/2}}
\left(\displaystyle\int_{B_r}\! \left(|Dv_{\eps}|^{2}+\eps^2\right)^{p/2}\,dy\right)^{1/2}\\
&\quad+C\left(\eps^2+\norm{Dv_{\eps}}_{L^{\infty}(B_r)}^2\right)^{\frac{p-1}{2}} \norm{\mathcal{P}^{V}(\cdot, R)}_{L^{\infty}(B_R)}\\
&\leq\dfrac{1}{2} \left(\eps^2+\norm{Dv_{\eps}}_{L^{\infty}(B_r)}^2\right)^{\frac{p}{2}} +C \norm{\mathcal{P}^{V}(\cdot, R)}_{L^{\infty}(B_R)}^{p}\\
&\quad+\dfrac{C}{(r-\rho)^{n}}\displaystyle\int_{B_r}\! \left(|Dv_{\eps}|^{2}+\eps^2\right)^{p/2}\,dy,
\end{align*}
where $C=C(p,n)$. Now the standard iteration lemma (see for example \cite[Lemma 2.1]{DM2010}) implies that
\begin{eqnarray}\label{maija}
\left(\norm{Dv_{\eps}}_{L^{\infty}(B_{R/2})}^{2}+\eps^2\right)^{\frac{p}{2}}&\leq C\displaystyle\vint_{B_R}\! \left(|Dv_{\eps}|^{2}+\eps^2\right)^{p/2}\,dy\nonumber\\
&+C \norm{\mathcal{P}^{V}(\cdot, R)}_{L^{\infty}(B_R)}^p,
\end{eqnarray}
where $C=C(p,n)$. Consequently, combining \eqref{cacppet}, \eqref{cacpn}, \eqref{cacpg} and \eqref{maija} we get 
\begin{eqnarray}
\norm{Dv_{\eps}}_{L^{\infty}(B_{R/2})}&\leq&C\left( R^{-n/p}\norm{Dv_{\eps}}_{L^p(B_R)}+ \norm{\mathcal{P}^{V}(\cdot, R)}_{L^{\infty}(B_R)}+1\right),
\nonumber
\end{eqnarray}
for all $R$ such that $B_R\subset\Om'$ and where  $C=C(p,n)$. Since $v_\eps$ is uniformly bounded in $L^\infty (\Om')$ and $h_\eps$ is uniformly bounded in $L^q(\Om')$, we have  $V\in L^q(\Om')$. We obtain
$$\int_{B(x, \rho)} |V(y)|^2\, dy\leq\norm{V}_{L^q(\Om')}^2 |B(x, \rho)|^{\frac{q-2}{q}}\leq C \norm{V}_{L^q(\Om')}^2 \rho^{\frac{n(q-2)}{q}},$$
where $C=C(n)$, and 
$$\mathcal{P}^{V}(x, R)\leq\norm{V}_{L^q(\Om')}\int_0^R \rho^{\frac{n(q-2)}{2q}-\frac{n}{2}}\, d\rho\leq C R^{\frac{q-n}{q}},$$
where $C=C(q,n) \norm{V}_{L^q(\Om')}$. It follows that
\begin{equation}\label{potf}
\underset{B(x,R)}{\sup}\, \mathcal{P}^{V}(\cdot, R)\leq C\underset{B(x,R)}{\sup} R^{\frac{q-n}{q}}<\infty,
\end{equation}
where $C=C(n, q,\norm{V}_{L^q(\Om')})$. 
Recalling that $V=h_\eps-\lambda v_\eps$, and using the bound \eqref{lorop} for $\norm{v_\eps}_{L^\infty(\Om')}$, we get 
\begin{equation}
\norm{V}_{L^q(\Om')}\leq C\left(p,n,q, |\Om'|,\norm{f}_{L^q(\Om')},\norm{u}_{L^\infty(\Om')}\right).
\end{equation}
Hence, 
\[
\norm{Dv_{\eps}}_{L^{\infty}(B_{R/2})}\leq \tilde C\left(p,n,\Om, q,\norm{f}_{L^q(\Om')},\norm{u}_{L^\infty(\Om')}, R\right). 
\]

\noindent\textbf{Step 4: Local uniform $C^{1,\beta}$ estimate for $u_{\eps}$}
Since  $Dv_{\eps}$ is locally uniformly  bounded in $L^{\infty}$ with respect to $\eps$, the function 
$$\mu_{\eps}:=(|Dv_{\eps}|^{2}+\eps^2)^{\frac{p-2}{2}}V$$
 is also locally bounded in $L^{q}$ with $q>n$ and satisfies
\begin{eqnarray*}
\int_{B_r(x)}|\mu_\eps|dy&\leq& C(p)\left(\norm{D v_{\eps}}_{L^{\infty}(B_r(x))}^{p-2}+1\right)\int_{B_r(x)}|V(y)|\, dy\\
&\leq& C(p,n)\left(\norm{D v_{\eps}}_{L^{\infty}(B_r(x))}^{p-2}+1\right)\norm{V}_{L^q(\Om')}r^{\frac{n(q-1)}{q}}\\
&\leq& \tilde C\left(q,n,p, \Om', \norm{f}_{L^q(\Omega')}, \norm{u}_{L^\infty(\Omega')}\right) r^{n-p+\delta},
\end{eqnarray*}
where $\delta=\frac{qp-n}{q}$, $\delta\in(p-1,p)$.
Applying the result of Lieberman \cite[Theorem 5.3]{Lie93} ($(v_{\eps})$ being also  bounded in $L^{\infty}$), we get that  $v_{\eps}$ are locally of class $C^{1, \beta}$  for some $\beta=\beta(p,q,n)$ and  for any $\Om''\subset\subset\Om'$
\begin{equation}\label{fabi}
[v_{\eps}]_{C^{1,\beta}(\Omega'')}\leq C=C\left(p,q,n, |\Om'|, \norm{u}_{L^\infty(\Omega')}, d'', \norm{f}_{L^q(\Om')}\right),
\end{equation}
where $d''=\dist(\Om'', \partial\Om')$.

\noindent\textbf{Step 5: Convergence in the weak and viscosity sense and conclusion}\label{step5}
We get from \eqref{fabi} and the Arzel\`a-Ascoli theorem that $(u_{\eps})$ converges (up to a subsequence) to a function $v$ in $C^{1,\a}_{\text{loc}}(\Om')$ for some $\a=\a(q,p,n)<\beta$. Passing to the limit within the weak formulation, $v$ is a weak solution to
\begin{equation}\label{berty}
-\Delta_p v=|Dv|^{p-2}(h-\lambda v),
\end{equation}
see Appendix \ref{unif limit function} for details. 
Passing to the limit in \eqref{fabi},  we get that for any $\Om''\subset\subset\Om'$, we have the estimate
\begin{align*}
\norm{v}_{C^{1, \alpha}(\Om'')}\leq C\left(p,n,q,d'', |\Om'|, \norm{u}_{L^\infty(\Om')}, \norm{f}_{L^q(\Om')}\right).
\end{align*}
From the boundedness of $v_\eps$, it follows that $v$ is a bounded weak solution of the Dirichlet problem associated to  \eqref{berty}. Since $(v_\eps-u)$ is uniformly bounded in $W^{1,p}_0(\Om')$, we have $(v-u)\in W^{1,p}_0(\Om')$. Assuming sufficient regularity for the boundary $\partial \Om'$, we have $v\in C(\overline{\Om'})$ and for  any $x_0\in\partial\Omega'$ $\underset{x\to x_0}{\lim}\, v(x)=u(x_0)$. The reader can find further discussion of the boundary regularity problem for elliptic equations in the monograph of Mal\'y and Ziemer \cite{malyzim}.
On the other hand, the local H\"older continuity of $Dv_{\eps}$ and the H\"older continuity of $h_\eps$ imply, by the classical elliptic regularity theory, that $v_{\eps}$ is also a classical solution to
$$ -\mathrm{div} \left(\left( | D v_{\eps}|^2+ \eps^2\right)^{(p-2)/2}  D v_{\eps} \right)=(|Dv_{\eps}|^{2}+\eps^2)^{\frac{p-2}{2}} (h_\eps-\lambda v_\eps)\quad\text{in}\,\, \Om'.$$
This implies that  $v_{\eps}$ solves in the classical sense 
\begin{equation}\label{perti}
-\Delta v_{\eps}-(p-2) \frac{D^2v_{\eps} Dv_{\eps}\cdot Dv_{\eps}}{| D v_{\eps}|^2+ \eps^2} =h_\eps-\lambda v_\eps \qquad\text{in}\,\,\Om'.
\end{equation}
	Hence $v_{\eps}$ is a continuous  viscosity solution of the  Dirichlet problem associated
to equation \eqref{perti} with continuous  boundary data $u$. 
Passing to the limit in \eqref{perti}, we get  that the limit function $v$ is also a continuous viscosity solution of \eqref{eigennormpl} with boundary data equals $u$, see Appendix \ref{viscconvergence}.  The viscosity solution to \eqref{eigennormpl} is understood in the sense of Definition \ref{defweak}. It is easy to see that the  fixed viscosity solution $u$ of \eqref{normpl} is a viscosity solution to \eqref{eigennormpl} with the weaker Definition \ref{defweak} ($\eta$ is then taken as an eigen-vector of $D^2\phi(x_0)$).
It follows (see the Appendix \ref{unieigen}  for details) that, for a given boundary data, the Dirichlet problem associated to \eqref{eigennormpl} admits a unique  viscosity solution.
By uniqueness, we conclude that the limit function $v$ is the unique viscosity solution of \eqref{eigennormpl} and since $u$ is a viscosity solution to this problem, we conclude that $u=v$ in $\Om'$.
It follows that  $u$ is  of class $C^{1,\a}_{\text{loc}}$ for some $\a=\a(p,q,n)$ and  the estimate of Theorem \ref{thm:main2} holds.

\section{Nearly optimal H\"older exponent for gradients}\label{chpt6}
In this section we prove Theorem \ref{thm:main3}. Assume that $f\in L^q(\Omega) \cap C(\Omega)$ and fix arbitrary $\xi>0$. We will prove that the viscosity solutions to \eqref{normpl} are of class $C^{1,\alpha_\xi}_{\text{\text{loc}}}$, where 
\begin{equation*}
\alpha_\xi=\left\{\begin{array}{ll}
 \alpha_0-\xi  & \text{when}\ q=\infty,\\
\min(\alpha_0-\xi,1-\frac nq) & \text{when}\ \max(n,\frac p2,2)<q<\infty,
\end{array}
\right.
\end{equation*}
and $\alpha_0$ is the optimal H\"older exponent in an a priori estimate for gradients of $p$-harmonic functions. In the case $q=\infty$ we only assume that $p>1$, whereas in the case $q<\infty$ we require $p>2$.

The question of optimal regularity for inhomogeneous $p$-Laplacian in divergence form has received attention as well, see \cite{lindgren2013regularity,kuusi2014guide,aruijotu,arauijoz}. An alternative approach to study optimal regularity questions for $p$-Poisson problem in divergence form could be based on \cite[equation (1.38)]{kuusim12}. In our paper we do not try to quantify the explicit optimal value of $\alpha$ in $C^{1,\alpha}$ estimate to the homogenous case.
\begin{remark}
If $p\geq 2$ and $f$ is a continuous and bounded function, in  the case that $\Om$ is either a ball or an annulus, radial viscosity solutions to \eqref{normpl} have a better regularity and they are in $C^{1,1}(\Om)$ (see \cite[Theorem 1.1]{birdemen1}).
\end{remark}


\subsection{The case $q=\infty$}\label{optimalsection1}
In this subsection we prove Theorem \ref{thm:main3} when $f\in L^\infty(\Omega)\cap C(\Omega)$. Since our results are local, by translation and rescaling we can restrict our study in the unit ball $B_1\subset \Om$ and show the regularity at $0\in B_1\subset \Omega$. Like previously, it is useful to do suitable rescaling to get an Arzel\`a-Ascoli type compactness lemma. During the rest of this section, for $\delta_0>0$ to be determined later, we assume that $||u||_{L^\infty(B_1)}\leq 1$ and $||f||_{L^\infty(B_1)}\leq \delta_0$ without loss of generality. This can be seen like before: Let $\kappa=(||u||_{L^\infty(B_1)}+\delta_0^{-1}||f||_{L^q(B_1)})^{-1}$. Setting $\tilde{u}=\kappa u$, then $\tilde{u}$ satisfies
\begin{equation*}
-\Delta_p^N\left(\tilde{u}\right)=\tilde{f}
\end{equation*}
with $||\tilde{u}||_{L^\infty(B_1)}\leq 1$ and $||\tilde{f}||_{L^q(B_1)}\le \delta_0$.

For convenience, in this subsection we denote by $C$ different constants depending only on $p$ and $n$.

First we use our regularity result from Section \ref{chpt3} to show that the solutions to \eqref{normpl} can be approximated by $p$-harmonic functions in $C^{1,\alpha}_{\text{loc}}$ for some small $\alpha>0$. 

\begin{lemma}
Let $u\in C(B_1)$ be a viscosity solution to equation \eqref{normpl}. For given $\eps>0$, there exists $\delta_0=\delta_0(p,n,\eps)$ such that for $||u||_{L^\infty(B_1)}\leq 1$, $||f||_{L^\infty(B_1)}\leq \delta_0$, there exists a $p$-harmonic function $h$ in $B_{3/4}$ satisfying
\[
||u-h||_{L^\infty(B_{1/2})}< \eps\ \ \ \text{and}\ \ \ ||D u-D h||_{L^\infty(B_{1/2})}< \eps.
\]
\end{lemma}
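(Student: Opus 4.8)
The plan is a standard compactness-and-contradiction argument. Suppose the assertion fails. Then there are $\varepsilon>0$ and, for every $j\in\N$, a viscosity solution $u_j$ of $-\Delta_p^N u_j=f_j$ in $B_1$ with $f_j\in L^\infty(B_1)\cap C(B_1)$, $\|u_j\|_{L^\infty(B_1)}\le 1$ and $\|f_j\|_{L^\infty(B_1)}\le 1/j$, such that for \emph{every} $p$-harmonic $h$ in $B_{3/4}$ one has $\|u_j-h\|_{L^\infty(B_{1/2})}\ge\varepsilon$ or $\|Du_j-Dh\|_{L^\infty(B_{1/2})}\ge\varepsilon$. The goal is to produce a subsequential limit $u_\infty$ of $(u_j)$ that is $p$-harmonic in $B_{3/4}$ and close to $u_j$ in $C^1(B_{1/2})$ for large $j$, contradicting this.

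First I would invoke Theorem \ref{thm:main1}. Since $\|u_j\|_{L^\infty(B_1)}\le 1$ and $\|f_j\|_{L^\infty(B_1)}\le 1$ for all $j$, the theorem provides a fixed $\alpha=\alpha(p,n)\in(0,1)$ and, for each $K\subset\subset B_1$, a bound $[u_j]_{C^{1,\alpha}(K)}\le C(p,n,K)$ that is independent of $j$. Hence $(u_j)$ and $(Du_j)$ are locally uniformly bounded and equicontinuous on $B_1$, so by the Arzel\`a--Ascoli theorem together with a diagonal argument a subsequence (not relabeled) converges in $C^1_{\text{loc}}(B_1)$ to some $u_\infty\in C^{1,\alpha}_{\text{loc}}(B_1)$.

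Next I would show that $u_\infty$ is a viscosity solution of $-\Delta_p^N u_\infty=0$ in $B_1$. Because $f_j\to 0$ locally uniformly and $u_j\to u_\infty$ locally uniformly, this is the usual stability of viscosity solutions, but it must be run through the relaxed (semicontinuous envelope) formulation of the operator from the Definition in Section \ref{prel}, exactly as in the limit passages collected in the Appendix: given a $C^2$ test function $\phi$ touching $u_\infty$ from above at $x_0$, one perturbs it by small constants to obtain touching points $x_j\to x_0$ for $u_j$, writes the viscosity inequality for $u_j$ at $x_j$, and lets $j\to\infty$; when $D\phi(x_0)\ne 0$ the operator is continuous near $x_0$, and when $D\phi(x_0)=0$ one uses the envelope inequalities from the definition together with $f_j\to 0$ (and symmetrically for subsolutions). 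Once this is established, the equivalence of viscosity and weak solutions for the homogeneous $p$-Laplace equation (\cite{juutinenlm01}, also used in the proof of Lemma \ref{lip2}) shows that $u_\infty$ is a weak solution of $-\Delta_p u_\infty=0$, i.e. $p$-harmonic, in $B_1$; in particular $h:=u_\infty|_{B_{3/4}}$ is $p$-harmonic in $B_{3/4}$. Since $B_{1/2}\subset\subset B_1$ and $u_j\to u_\infty$ in $C^1_{\text{loc}}(B_1)$, for all large $j$ we have simultaneously $\|u_j-h\|_{L^\infty(B_{1/2})}<\varepsilon$ and $\|Du_j-Dh\|_{L^\infty(B_{1/2})}<\varepsilon$, contradicting the choice of $u_j$.

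The main obstacle is precisely the stability step: the normalized $p$-Laplacian is discontinuous at the origin of the gradient variable, so no off-the-shelf stability theorem for continuous uniformly elliptic operators applies directly, and one must be careful with the two regimes ($D\phi(x_0)\ne 0$ versus $D\phi(x_0)=0$) and with which semicontinuous envelope is attached to sub- versus supersolutions. The remaining ingredients---the uniform interior $C^{1,\alpha}$ bound and the Arzel\`a--Ascoli extraction---are routine given Theorem \ref{thm:main1}, and the identification of $u_\infty$ as $p$-harmonic is immediate from the cited equivalence result.
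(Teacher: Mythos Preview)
Your proposal is correct and follows essentially the same compactness-and-contradiction argument as the paper: uniform $C^{1,\alpha}$ bounds from Theorem~\ref{thm:main1}, Arzel\`a--Ascoli extraction of a $C^1$-convergent subsequence, identification of the limit as a viscosity (hence weak) solution of $-\Delta_p h=0$, and the resulting contradiction. You supply more detail on the stability step than the paper (which simply asserts that the limit solves $\Delta_p^N h=0$ in the viscosity sense, implicitly relying on the appendix arguments), but the approach is identical.
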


\begin{proof}
Suppose that the lemma is not true. Then, for some $\eps_0>0$ there is a uniformly bounded sequence of continuous functions $(u_j)$ and a sequence $(f_j)\subset C(\Omega)\cap L^\infty(\Omega)$, $||f_j||_{L^\infty(B_1)}\rightarrow 0$, such that
\[
-\Delta^N_p u_j=f_j,
\]
but for all $p$-harmonic functions $h$ defined in $B_{3/4}$ we have either $||u_j-h||_{L^\infty(B_{1/2})}\geq \eps_0$ or $||D u_j-D h||_{L^\infty(B_{1/2})}\geq \eps_0$.

By Theorem \ref{thm:main1}, $(u_j)\subset C^{1,\alpha}(B_{3/4})$ for some $\alpha>0$, so by the Arzel\`a-Ascoli theorem there is a subsequence, still denoted by $(u_j)$, which converges to some function $h$ in $C^{1,\alpha}(B_{1/2})$. Then the limit function $h$ satisfies $\Delta^N_p h=0$ in the viscosity sense, so it also satisfies $\Delta_p h=0$ in the weak sense. By $C^{1,\alpha}$ convergence, there is $j_0\in \N$ such that $||u_{j_0}-h||_{L^\infty(B_{1/2})}< \eps_0$ and $||D u_{j_0}-D h||_{L^\infty(B_{1/2})}< \eps_0$. We have reached a contradiction.
\end{proof}

By using the approximation with $p$-harmonic functions, in the next lemma we obtain an oscillation estimate for solutions $u$ to \eqref{normpl} near the critical set $\{x\ :\ D u(x)=0\}$.

\begin{lemma}\label{applemma}
There exist $\lambda_0=\lambda_0(p,n)\in (0,\frac12)$ and $\delta_0>0$ such that if $||f||_{L^\infty(B_1)}\leq \delta_0$ and $u\in C^{1,\alpha}(B_1)$ is a viscosity solution to \eqref{normpl} in $B_1$ with $||u||_{L^\infty(B_1)}\leq 1$, then
\[
\sup_{x\in B_{\lambda_0}}|u(x)-u(0)|\leq \lambda^{1+\alpha_\xi}_0+|D u(0)|\lambda_0.
\] 
\end{lemma}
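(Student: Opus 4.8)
The plan is to derive the estimate from the approximation lemma just proved, together with the optimal interior $C^{1,\alpha_0}$ a priori estimate for $p$-harmonic functions, being careful to choose the two parameters $\lambda_0$ and $\delta_0$ in the right order. Given $u$ as in the statement, I would apply the previous lemma with a small $\eps>0$ (to be fixed at the very end) to obtain a $p$-harmonic function $h$ in $B_{3/4}$ with $\norm{u-h}_{L^\infty(B_{1/2})}<\eps$ and $\norm{Du-Dh}_{L^\infty(B_{1/2})}<\eps$; in particular $\norm{h}_{L^\infty(B_{1/2})}\le 1+\eps\le 2$. By the a priori $C^{1,\alpha_0}$ estimate for $p$-harmonic functions, applied on the ball $B_{1/2}(0)\subset B_{3/4}$, there is $C_1=C_1(p,n)$ with $[Dh]_{C^{0,\alpha_0}(B_{1/4})}\le C_1$.

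Next I would use a first-order Taylor expansion of $h$ at the origin: for $x\in B_{\lambda_0}\subset B_{1/4}$,
\[
\abs{h(x)-h(0)-Dh(0)\cdot x}\le [Dh]_{C^{0,\alpha_0}(B_{1/4})}\,\abs{x}^{1+\alpha_0}\le C_1\lambda_0^{1+\alpha_0}.
\]
Combining this with $\abs{Dh(0)}\le\abs{Du(0)}+\eps$ and with the two approximation bounds evaluated at $0$ and at $x$, the triangle inequality gives, for every $x\in B_{\lambda_0}$,
\[
\abs{u(x)-u(0)}\le 2\eps+\bigl(\abs{Du(0)}+\eps\bigr)\lambda_0+C_1\lambda_0^{1+\alpha_0}\le \abs{Du(0)}\lambda_0+3\eps+C_1\lambda_0^{1+\alpha_0},
\]
where I used $\lambda_0\le 1$.

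Finally, it remains to absorb the error $3\eps+C_1\lambda_0^{1+\alpha_0}$ into $\lambda_0^{1+\alpha_\xi}$. Since $\alpha_\xi<\alpha_0$ one has $\lambda_0^{1+\alpha_0}=\lambda_0^{\alpha_0-\alpha_\xi}\,\lambda_0^{1+\alpha_\xi}$, so I first fix $\lambda_0\in(0,1/4]$, depending only on $p$ and $n$ (through $\alpha_0$ and the fixed exponent $\xi$), so small that $C_1\lambda_0^{\alpha_0-\alpha_\xi}\le\tfrac12$; then I set $\eps:=\tfrac16\lambda_0^{1+\alpha_\xi}$ and let $\delta_0=\delta_0(p,n,\eps)=\delta_0(p,n)$ be the constant furnished by the approximation lemma for this $\eps$. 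With these choices $3\eps+C_1\lambda_0^{1+\alpha_0}\le\tfrac12\lambda_0^{1+\alpha_\xi}+\tfrac12\lambda_0^{1+\alpha_\xi}=\lambda_0^{1+\alpha_\xi}$, which gives the claim. I do not expect a genuine analytic obstacle: the regularity inputs (Theorem \ref{thm:main1}, used inside the approximation lemma, and the classical $C^{1,\alpha_0}$ estimate for the homogeneous $p$-Laplacian) are invoked as black boxes. The one point that needs care is the order of quantifiers — $\lambda_0$ must be chosen before $\eps$, and one must check that both $\lambda_0$ and $\delta_0$ end up depending only on $p$ and $n$ once $\xi$ is frozen.
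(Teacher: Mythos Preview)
Your proof is correct and follows essentially the same approach as the paper: approximate $u$ by a $p$-harmonic function $h$ via the previous lemma, use the $C^{1,\alpha_0}$ a priori estimate for $h$ to control its first-order Taylor remainder, and then fix $\lambda_0$ first (so that $C_1\lambda_0^{\alpha_0-\alpha_\xi}\le\tfrac12$) and $\eps=\tfrac16\lambda_0^{1+\alpha_\xi}$ afterward. The paper's argument is organized slightly differently---it bounds $|u(x)-[u(0)+Du(0)\cdot x]|$ first and then applies the triangle inequality---but the ingredients, the error term $3\eps$, and the choice of parameters are the same.
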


\begin{proof}
Take the approximating $p$-harmonic function $h$ from the previous lemma. By the a priori estimate for $p$-harmonic functions, there exist $\lambda_0=\lambda_0(p,n)\in (0,\frac12)$ such that
\[
\sup_{x\in B_{\lambda_0}}|h(x)-[h(0)+D h(0)\cdot x]|\leq C\lambda_0^{1+\alpha_0},
\]
and $C\lambda_0^{1+\alpha_0}\leq \frac12 \lambda_0^{1+\alpha_\xi}$. Now we choose $\eps>0$ satisfying $\eps<\frac16 \lambda_0^{1+\alpha_\xi}$. This $\eps$ determines $\delta_0$ through the previous lemma. We get for all $x\in B_{\lambda_0}$,
\begin{align*}
|u(x)-[u(0)+D u(0)\cdot x]|& \leq |h(x)-[h(0)+D h(0)\cdot x]|\\
& +|(u-h)(x)|+|(u-h)(0)|+|D (u-h)(0)\cdot x|\\
& \leq C\lambda_0^{1+\alpha_0}+3\eps\\
& \leq \lambda_0^{1+\alpha_\xi}.
\end{align*}
The result follows by the triangle inequality.
\end{proof}

Next we iterate the previous estimate to control the oscillation of the solutions in dyadic balls.

\begin{theorem}\label{smallgradient}
Under the assumptions of the previous lemma, there exists a constant $C$ such that
\[
\sup_{x\in B_r}|u(x)-u(0)|\leq C r^{1+\alpha_\xi}\left(1+|D u(0)|r^{-\alpha_\xi}\right)
\]
for all sufficiently small $r\in(0,1)$.
\end{theorem}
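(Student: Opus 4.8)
The plan is to iterate Lemma~\ref{applemma} along the dyadic scales $r_k:=\lambda_0^{\,k}$, where $\lambda_0=\lambda_0(p,n)\in(0,\tfrac12)$ is the radius furnished by that lemma. Write $t:=\abs{Du(0)}$ and $\omega(r):=\sup_{B_r}\abs{u(x)-u(0)}$. The first step is to arrange the rescalings so that the hypotheses of Lemma~\ref{applemma} survive at every scale. For $r\in(0,1)$ set $K(r):=\max(\omega(r),r^2)$ and
\[
u_r(x):=\frac{u(rx)-u(0)}{K(r)},\qquad x\in B_1.
\]
Since $u\in C^{1,\alpha}(B_1)$ by Theorem~\ref{thm:main1} and $r<1$, we have $u_r\in C^{1,\alpha}(B_1)$, $u_r(0)=0$, and $\norm{u_r}_{L^\infty(B_1)}=\omega(r)/K(r)\le 1$; moreover, by the natural scaling of $\Delta_p^N$, $u_r$ is a viscosity solution of $-\Delta_p^N u_r=\tilde f_r$ in $B_1$ with $\tilde f_r(x)=\tfrac{r^2}{K(r)}f(rx)$, so $\norm{\tilde f_r}_{L^\infty(B_1)}\le\delta_0$ because $K(r)\ge r^2$. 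Hence Lemma~\ref{applemma} applies to $u_r$, and since $Du_r(0)=\tfrac{r}{K(r)}Du(0)$ it yields, after undoing the rescaling,
\[
\omega(\lambda_0 r)\le K(r)\,\lambda_0^{1+\alpha_\xi}+\lambda_0\, r\, t .
\]

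Next I would take $r=r_k$ and abbreviate $\omega_k:=\omega(r_k)$. Using $K(r_k)\le\omega_k+r_k^2$ and $\lambda_0 r_k=r_{k+1}$ gives the recursion
\[
\omega_{k+1}\le\lambda_0^{1+\alpha_\xi}\,\omega_k+\lambda_0^{1+\alpha_\xi}\,r_k^2+r_{k+1}\,t ,
\]
together with $\omega_0=\sup_{B_1}\abs{u-u(0)}\le 2\norm{u}_{L^\infty(B_1)}\le 2$. Unrolling a recursion of the form $\omega_{k+1}\le\lambda_0^{1+\alpha_\xi}\omega_k+g_k$ produces $\omega_k\le\lambda_0^{k(1+\alpha_\xi)}\omega_0+\sum_{j=1}^{k}\lambda_0^{(k-j)(1+\alpha_\xi)}g_{j-1}$, and I would estimate the two resulting sums separately. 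Because $\alpha_\xi<1$ (as $\alpha_0<1$), the contribution of the terms $g_k=\lambda_0^{1+\alpha_\xi}r_k^2$ forms a geometric series of ratio $\lambda_0^{1-\alpha_\xi}<1$, hence is bounded by $C\,r_k^{1+\alpha_\xi}$; the contribution of the terms $g_k=r_{k+1}t$ forms a geometric series of ratio $\lambda_0^{-\alpha_\xi}>1$, hence is dominated by its last term and bounded by $C\,t\,r_k$. Combining, and using $\omega_0\le 2$, one gets
\[
\omega_k\le C\bigl(r_k^{1+\alpha_\xi}+t\,r_k\bigr)=C\,r_k^{1+\alpha_\xi}\bigl(1+t\,r_k^{-\alpha_\xi}\bigr),\qquad C=C(p,n).
\]

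Finally, for an arbitrary small $r\in(0,1)$ I would choose $k$ with $r_{k+1}<r\le r_k$, so that $r_k\le\lambda_0^{-1}r$, and conclude
\[
\sup_{B_r}\abs{u(x)-u(0)}=\omega(r)\le\omega_k\le C\bigl(r_k^{1+\alpha_\xi}+t\,r_k\bigr)\le C\lambda_0^{-(1+\alpha_\xi)}\,r^{1+\alpha_\xi}\bigl(1+t\,r^{-\alpha_\xi}\bigr),
\]
which is the asserted estimate after renaming $C$. I do not expect a genuine obstacle: this is the standard "improvement of oscillation by comparison with the model ($p$-harmonic) equation, followed by iteration" scheme, where Lemma~\ref{applemma} is precisely the one-step gain. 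The only delicate point is the bookkeeping of the two normalizations — choosing $K(r)$ so that both $\norm{u_r}_{L^\infty(B_1)}\le 1$ and $\norm{\tilde f_r}_{L^\infty(B_1)}\le\delta_0$ persist at every scale — together with the observation that $\alpha_\xi<1$ is exactly what makes the two geometric sums converge to the correct powers of $r$, so that the gradient contribution $\abs{Du(0)}\lambda_0$ accumulated over all the scales only produces the linear-in-$r$ term $t\,r$ and nothing worse.
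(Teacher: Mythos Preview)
Your proof is correct and follows essentially the same strategy as the paper: iterate Lemma~\ref{applemma} along the dyadic scales $\lambda_0^k$ via suitable rescalings, then sum the resulting geometric series. The only cosmetic difference is in the normalization: the paper rescales by the anticipated bound $\lambda_0^{k(1+\alpha_\xi)}+\sum_{j=0}^{k-1}\abs{Du(0)}\lambda_0^{k+j\alpha_\xi}$ and proves directly by induction that the rescaled functions stay bounded by $1$, whereas you rescale by $K(r)=\max(\omega(r),r^2)$, derive the linear recursion $\omega_{k+1}\le\lambda_0^{1+\alpha_\xi}\omega_k+\lambda_0^{1+\alpha_\xi}r_k^2+r_{k+1}t$, and solve it explicitly---both routes use $\alpha_\xi<1$ in exactly the same way to close the iteration.
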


\begin{proof}
For $k\in \N$, consider the rescaled function defined in $B_1$,
\[
v_k(x)=\frac{u(\lambda^k_0 x)-u(0)}{\lambda^{k(1+\alpha_\xi)}_0+\sum^{k-1}_{j=0}|D u(0)|\lambda^{k+j\alpha_\xi}_0}.
\]
We have $v_k(0)=0$,
\[
D v_k(0)=\frac{\lambda^k_0}{\lambda^{k(1+\alpha_\xi)}_0+\sum^{k-1}_{j=0}|D u(0)|\lambda^{k+j\alpha_\xi}_0}D u(0),
\]
and
\[
-\Delta^N_p v_k(x)=\frac{\lambda^{2k}_0}{\lambda^{k(1+\alpha_\xi)}_0+\sum^{k-1}_{j=0}|D u(0)|\lambda^{k+j\alpha_\xi}_0}f(\lambda^k_0 x)\leq |\lambda^{k(1-\alpha_\xi)}_0 f(\lambda^k_0 x)|,
\]
where $|\lambda^{k(1-\alpha_\xi)}_0 f(\lambda^k_0 x)|\leq \delta_0$, since $\lambda^{k(1-\alpha_\xi)}_0\leq 1$. 

Let us show by induction that $||v_k||_{L^\infty(B_1)}\leq 1$. By the previous lemma, this holds for $k=1$, so assume that $||v_j||_{L^\infty(B_1)}\leq 1$ for $j\leq k$. As shown above, the function $v_k$ satisfies the conditions of the previous lemma, so we have
\[
\sup_{x\in B_{\lambda_0}}|v_k(x)-v_k(0)|\leq \lambda^{1+\alpha_\xi}_0+|D v_k(0)|\lambda_0.
\]
Hence,
\begin{align*}
\sup_{x\in B_1}&\frac{|u(\lambda^{k+1}_0 x)-u(0)|}{\lambda^{k(1+\alpha_\xi)}_0+\sum^{k-1}_{j=0}|D u(0)|\lambda^{k+j\alpha_\xi}_0}\\
&\leq \lambda^{1+\alpha_\xi}_0+\frac{\lambda^{k+1}_0}{\lambda^{k(1+\alpha_\xi)}_0+\sum^{k-1}_{j=0}|D u(0)|\lambda^{k+j\alpha_\xi}_0}|D u(0)|,
\end{align*}
which reads
\[
\sup_{x\in B_1}|u(\lambda^{k+1}_0 x)-u(0)|\leq \lambda^{(k+1)(1+\alpha_\xi)}_0+\sum^{k}_{j=0}|D u(0)|\lambda^{k+j\alpha_\xi+1}_0.
\]
This is equivalent to $||v_{k+1}||_{L^\infty(B_1)}\leq 1$, so induction is complete. 

We obtain for arbitrary $k$,
\begin{align*}
\sup_{x\in B_{\lambda^{k+1}_0}}\frac{|u(x)-u(0)|}{\lambda^{(k+1)(1+\alpha_\xi)}_0}&\leq 1+\frac{\sum^{k}_{j=0}|D u(0)|\lambda^{k+j\alpha_\xi+1}_0}{\lambda^{(k+1)(1+\alpha_\xi)}_0}\\
& \leq 1+|D u(0)|\lambda^{-(k+1)\alpha_\xi}_0\sum^{k}_{j=0}\lambda^{j\alpha_\xi}_0\\
& \leq \left(1+\frac{1}{1-\lambda^{\alpha_\xi}_0}\right)\left(1+|D u(0)|\lambda^{-(k+1)\alpha_\xi}_0\right)\\
& = C\left(1+|D u(0)|\lambda^{-(k+1)\alpha_\xi}_0\right).
\end{align*}
Since this holds for all $k\in \N$, we obtain for all sufficiently small $r>0$,
\[
\sup_{x\in B_r}|u(x)-u(0)|\leq C r^{1+\alpha_\xi}\left(1+|D u(0)|r^{-\alpha_\xi}\right).\qedhere
\]
\end{proof}

We are ready to show $C^{1,\alpha_\xi}_{\text{\text{loc}}}$ regularity for solutions to equation \eqref{normpl}. If the gradient $D u(0)$ is very small, we obtain the result from the previous theorem. In the other case the result follows from a more classical reasoning using the regularity theory of uniformly elliptic equations.

\begin{theorem}\label{optimalthm}
Under the assumptions of Lemma \ref{applemma}, we have for all sufficiently small $r\in (0,1)$,
\[
\sup_{x\in B_r}\left|u(x)-[u(0)+D u(0)\cdot x] \right|\leq C r^{1+\alpha_\xi}.
\]
\end{theorem}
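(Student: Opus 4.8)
The plan is to split according to the size of $|Du(0)|$ relative to $r^{\alpha_\xi}$, exactly as the paragraph preceding the statement indicates. Fix a small $r$. If $|Du(0)|\le r^{\alpha_\xi}$, then Theorem \ref{smallgradient} already gives $\sup_{B_r}|u-u(0)|\le Cr^{1+\alpha_\xi}\bigl(1+|Du(0)|r^{-\alpha_\xi}\bigr)\le 2Cr^{1+\alpha_\xi}$, while $|Du(0)\cdot x|\le |Du(0)|\,r\le r^{1+\alpha_\xi}$ on $B_r$, so the triangle inequality yields $\sup_{B_r}|u(x)-u(0)-Du(0)\cdot x|\le(2C+1)r^{1+\alpha_\xi}$; if $Du(0)=0$ this covers every scale. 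The remaining, and essential, case is a large gradient, $|Du(0)|>r^{\alpha_\xi}$, where the equation is uniformly elliptic near $0$.

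In the large-gradient case I would first dispose of the situation where $|Du(0)|$ is bounded below by a fixed positive constant: the a priori bound $[u]_{C^{1,\alpha}(B_{1/2})}\le C_1$ coming from Theorem \ref{thm:main1} (resp. Theorem \ref{thm:main2}) then forces $|Du|\ge\tfrac12|Du(0)|$ on a ball $B_{\delta_1}(0)$ with $\delta_1$ fixed, so on $B_{\delta_1}$ the function $u$ solves the uniformly elliptic non-divergence equation $-\tr\bigl((I+(p-2)\tfrac{Du}{|Du|}\otimes\tfrac{Du}{|Du|})D^2u\bigr)=f$ with continuous coefficients; the classical Calder\'on--Zygmund $W^{2,q}$ estimates together with the Sobolev embedding $W^{2,q}\hookrightarrow C^{1,1-n/q}$ (using $q>n$, and $\alpha_\xi\le 1-n/q$, and in the case $q=\infty$ taking the integrability exponent large enough) give $\norm{u}_{C^{1,\alpha_\xi}(B_{\delta_1/2})}\le C$, hence the claim for $r<\delta_1/2$. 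So one may assume $|Du(0)|$ is as small as one likes; set $\rho:=|Du(0)|^{1/\alpha_\xi}$, which we may take small enough that Theorem \ref{smallgradient} applies at scale $\rho$, and it then suffices to treat $r<\rho$.

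For $r<\rho$ I would rescale to unit gradient: put $v(y):=\dfrac{u(\rho y)-u(0)}{\rho\,|Du(0)|}$ on $B_1$. Then $v(0)=0$, $Dv(0)=Du(0)/|Du(0)|$ has $|Dv(0)|=1$, and $v$ is a viscosity solution of $-\Delta_p^N v=g$ with $g(y)=\rho^{1-\alpha_\xi}f(\rho y)$; crucially $\norm{g}_{L^q(B_1)}\le\norm{f}_{L^q(B_\rho)}$ precisely because $\alpha_\xi\le 1-n/q$ (and $\norm{g}_{L^\infty(B_1)}\le\delta_0$ when $q=\infty$), while $\norm{v}_{L^\infty(B_1)}\le 2C$ by Theorem \ref{smallgradient} used at scale $\rho$. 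Applying Theorem \ref{thm:main1} (resp. \ref{thm:main2}) to $v$ gives $[v]_{C^{1,\alpha}(B_{3/4})}\le C_1$ with $C_1$ depending only on the now uniformly controlled data, so since $|Dv(0)|=1$ we get $|Dv|\ge\tfrac12$ on a universal ball $B_\delta(0)$; on $B_\delta$ the equation for $v$ is uniformly elliptic with continuous coefficients, and the same $W^{2,q}$-plus-Sobolev argument gives $\norm{v}_{C^{1,\alpha_\xi}(B_{\delta/2})}\le C$ with a universal $C$. Undoing the scaling, for $r<\rho\delta/2$ one has $\sup_{B_r}|u(x)-u(0)-Du(0)\cdot x|=\rho|Du(0)|\sup_{B_{r/\rho}}|v(y)-Dv(0)\cdot y|\le C\rho^{1+\alpha_\xi}(r/\rho)^{1+\alpha_\xi}=Cr^{1+\alpha_\xi}$, and the leftover range $\rho\delta/2\le r<\rho$ falls back to the first case since there $|Du(0)|=\rho^{\alpha_\xi}\le(2/\delta)^{\alpha_\xi}r^{\alpha_\xi}$.

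The main obstacle is keeping every constant independent of $|Du(0)|$ in the large-gradient regime. A naive appeal to continuity of $Du$ at a fixed scale is not enough, because the modulus of continuity (and H\"older norm) of the coefficient matrix $\tfrac{Du}{|Du|}\otimes\tfrac{Du}{|Du|}$ degenerates like $1/|Du(0)|$; this is exactly why the rescaling to unit gradient is essential, and why the $L^q$-scaling of the right-hand side — which introduces the factor $\rho^{1-\alpha_\xi}$ and hence forces the restriction $\alpha_\xi\le 1-n/q$ — is the point where the $\min$ in the definition of $\alpha_\xi$ is used. The rest is routine bookkeeping: checking that the rescaled $v$ still satisfies the hypotheses of Theorems \ref{thm:main1}--\ref{thm:main2} with uniform data, and combining the three scale-ranges ($|Du(0)|\le r^{\alpha_\xi}$, $r<\rho\delta/2$, and the intermediate band) into a single estimate valid for all sufficiently small $r$.
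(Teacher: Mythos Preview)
Your proposal is correct and follows essentially the same route as the paper: split according to whether $|Du(0)|\le r^{\alpha_\xi}$ or not, and in the large-gradient case rescale to unit gradient at the origin so that the equation becomes uniformly elliptic on a fixed small ball, then invoke $W^{2,q}$/Calder\'on--Zygmund estimates plus Sobolev embedding. Your rescaling $v(y)=(u(\rho y)-u(0))/(\rho|Du(0)|)$ with $\rho=|Du(0)|^{1/\alpha_\xi}$ is literally the paper's $w(x)=(u(\mu x)-u(0))/\mu^{1+\alpha_\xi}$ since $\rho|Du(0)|=\rho^{1+\alpha_\xi}$; the only cosmetic differences are that the paper absorbs your separate ``$|Du(0)|$ bounded below'' case into the definition $\mu=\min(3/4,|Du(0)|^{1/\alpha_\xi})$, and you are more explicit than the paper about re-applying Theorem~\ref{thm:main1} to the rescaled function to get a \emph{uniform} $C^{1,\alpha}$ bound (and hence a universal radius $\delta$ where $|Dv|\ge 1/2$). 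Note also that the specific statement in question sits in the $q=\infty$ subsection, so your remarks on the $L^q$ scaling and the constraint $\alpha_\xi\le 1-n/q$ really pertain to the parallel $L^q$ theorem later in the section rather than to this one.
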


\begin{proof}
When $|D u(0)|\leq r^{\alpha_\xi}$, Theorem \ref{smallgradient} gives
\begin{align*}
\sup_{x\in B_r}\left|u(x)-[u(0)+D u(0)\cdot x] \right|&\leq \sup_{x\in B_r}\left|u(x)-u(0)\right|+|D u(0)|r\\
& \leq C r^{1+\alpha_\xi}.
\end{align*}

When $|D u(0)|> r^{\alpha_\xi}$, define $\mu:=\text{min}\,(\frac34,|D u(0)|^{1/\alpha_\xi})$ and use the rescaled function
\[
w(x)=\frac{u(\mu x)-u(0)}{\mu^{1+\alpha_\xi}}.
\]
We have $w(0)=0$, $|D w(0)|\geq 1$, and
\[
-\Delta^N_p w(x)=\frac{\mu^2 f(\mu x)}{\mu^{1+\alpha_\xi}}=\mu^{1-\alpha_\xi}f(\mu x),
\]
where $||\mu^{1-\alpha_\xi}f||_{L^\infty (B_1)}\leq \delta_0$. From Theorem \ref{smallgradient} we obtain
\[
\sup_{x\in B_1} |w(x)|=\sup_{x\in B_\mu}\frac{|u(x)-u(0)|}{\mu^{1+\alpha_\xi}}\leq C\left(1+|D u(0)|\mu^{-\alpha_\xi}\right)=C.
\]
Since $u\in C^{1,\alpha}_{\text{loc}}(B_1)$ for some $\alpha>0$, there exists $\gamma\in(0,\frac12)$ such that
\[
|D w(x)|\geq \frac12\qquad \text{in}\ B_\gamma.
\]
For all $p>1$ $w$ is a viscosity solution to $-\Delta_p w=|D w|^{p-2}\mu^{1-\alpha_\xi}f(\mu x)=:g\in C(B_\gamma)$ in $B_\gamma$, so by \cite{julin2012new} it is a weak solution to the same equation, which also satisfies the conditions of \cite[Theorem  5.2, p.\ 277]{LU68}. Hence, $w\in W^{2,2}(B_\gamma)$, so by the local version of \cite[Lemma 9.16, p\, 241]{gilbargt01}, for arbitrary $\eps>0$ it holds $w\in C^{1,1-\eps}(B_\gamma)$. In particular, $w\in C^{1,\alpha_\xi}(B_\gamma)$. Hence, for all $s\in(0,\frac \gamma2)$, we have
\[
\sup_{x\in B_{s}}\left|w(x)-D w(0)\cdot x\right|\leq C s^{1+\alpha_\xi},
\]
or equivalently,
\[
\sup_{x\in B_{s}}\left|\frac{u(\mu x)-u(0)}{\mu^{1+\alpha_\xi}}-\mu^{-\alpha_\xi}D u(0)\cdot x\right|\leq C s^{1+\alpha_\xi},
\]
and we get
\[
\sup_{x\in B_s}\left|u(\mu x)-[u(0)+D u(0)\cdot (\mu x)]\right|\leq C (\mu s)^{1+\alpha_\xi}.
\]
If $r<\frac{\mu \gamma}{2}$, then the previous estimate gives 
\[
\sup_{x\in B_r}\left|u(x)-[u(0)+D u(0)\cdot x] \right|\leq C r^{1+\alpha_\xi}.
\]
If $r\geq \frac{\mu \gamma}{2}$, noticing that $r<\mu$ and $|D u(0)|\leq C\mu^{\alpha_\xi}$ we obtain
\begin{align*}
\sup_{x\in B_r}\left|u(x)-[u(0)+D u(0)\cdot x] \right|&\leq \sup_{x\in B_\mu} |u(x)-u(0)|+|D u(0)|\mu\\
& \leq C\mu^{1+\alpha_\xi}\\
& \leq C\left(\frac2\gamma \right)^{1+\alpha_\xi}r^{1+\alpha_\xi}\\
& \leq Cr^{1+\alpha_\xi}.\qedhere
\end{align*}
\end{proof}
This theorem completes the proof of Theorem \ref{thm:main3} when $f\in C(\Omega)\cap L^\infty(\Omega)$.

\subsection{The case $f\in C\cap L^q$}
In this subsection we assume that $p>2$ and $f\in C(B_1)\cap L^q(B_1)$, and use Theorem \ref{thm:main2} to show that the solutions to equation \eqref{normpl} are of class $C^{1,\alpha_\xi}_{\text{\text{loc}}}$. As previously, for $\delta_0>0$ to be determined later, we take the assumptions $||u||_{L^\infty(B_1)}\leq 1$ and $||f||_{L^q(B_1)}\leq \delta_0$ without loss of generality. We also denote by $C$ different constants depending only on $p$ and $n$.

We follow the reasoning of the first subsection. First we show that the solutions to equation \eqref{normpl} can be approximated by $p$-harmonic functions in $C^{1,\alpha}_{\text{loc}}$. 

\begin{lemma}
Let $u\in C(B_1)$, $||u||_{L^\infty(B_1)}\leq 1$, be a viscosity solution to equation \eqref{normpl}. Given $\eps>0$, there is $\delta_0=\delta_0(p,n,\eps)$ such that if $||f||_{L^q(B_1)}\leq \delta_0$, there is a $p$-harmonic function $h$ in $B_{3/4}$ satisfying
\[
||u-h||_{L^\infty(B_{1/2})}< \eps\ \ \ \text{and}\ \ \ ||D u-D h||_{L^\infty(B_{1/2})}< \eps.
\]
\end{lemma}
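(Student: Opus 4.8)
The plan is to argue by contradiction, following the scheme of the corresponding lemma in Subsection~\ref{optimalsection1}; the only genuinely new feature is that here the source terms converge to $0$ merely in $L^q$, so the passage to the limit cannot be done by pointwise viscosity stability and must instead be carried out in the weak formulation. So suppose the claim fails: there are $\eps_0>0$, a sequence of viscosity solutions $(u_j)\subset C(B_1)$ to $-\Delta_p^N u_j=f_j$ with $\|u_j\|_{L^\infty(B_1)}\le 1$, and $f_j\in C(B_1)\cap L^q(B_1)$ with $\|f_j\|_{L^q(B_1)}\to 0$, such that for every $p$-harmonic function $h$ in $B_{3/4}$ one has $\|u_j-h\|_{L^\infty(B_{1/2})}\ge\eps_0$ or $\|Du_j-Dh\|_{L^\infty(B_{1/2})}\ge\eps_0$.

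First I would apply Theorem~\ref{thm:main2} with $\Omega=B_1$ and fixed smooth intermediate balls (say $\Omega'=B_{7/8}$ and $\Omega''=B_{4/5}$) to obtain a uniform bound $[u_j]_{C^{1,\alpha}(\overline{B_{3/4}})}\le C$, where $\alpha=\alpha(p,q,n)$ and $C$ depends only on $p,q,n$ and on the quantities $\|u_j\|_{L^\infty(B_1)}\le 1$ and $\|f_j\|_{L^q(B_1)}$, which are uniformly bounded. Together with $\|u_j\|_{L^\infty(B_1)}\le1$ this gives a uniform $C^{1,\alpha}(\overline{B_{3/4}})$ bound, so by the Arzel\`a--Ascoli theorem a subsequence, still denoted $(u_j)$, converges in $C^1(\overline{B_{3/4}})$ to some $h\in C^{1,\alpha}(\overline{B_{3/4}})$.

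Next I would identify $h$ as a $p$-harmonic function in $B_{3/4}$. By Theorem~\ref{equivalence3.4} (applicable since $p>2$ and $q>\max(n,p/2)$), each $u_j$ is a weak solution of $-\Delta_p u_j=|Du_j|^{p-2}f_j$ in $B_1$, so for every $\varphi\in C_0^\infty(B_{3/4})$,
\[
\int_{B_{3/4}}|Du_j|^{p-2}Du_j\cdot D\varphi\,dx=\int_{B_{3/4}}|Du_j|^{p-2}f_j\varphi\,dx .
\]
Since $\|Du_j\|_{L^\infty(B_{3/4})}$ is uniformly bounded and $Du_j\to Dh$ uniformly on $\overline{B_{3/4}}$, the left-hand side converges to $\int_{B_{3/4}}|Dh|^{p-2}Dh\cdot D\varphi\,dx$ by dominated convergence, whereas the right-hand side is bounded by $\|Du_j\|_{L^\infty(B_{3/4})}^{p-2}\|f_j\|_{L^q(B_{3/4})}\|\varphi\|_{L^{q'}(B_{3/4})}\to 0$. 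Hence $h$ is a weak solution of $-\Delta_p h=0$ in $B_{3/4}$, i.e. a $p$-harmonic function there (and in particular $h\in C^{1,\alpha_0}_{\text{\text{loc}}}(B_{3/4})$).

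Finally, since $u_j\to h$ in $C^1(\overline{B_{3/4}})$, hence in $C^1(\overline{B_{1/2}})$, for $j$ large we have simultaneously $\|u_j-h\|_{L^\infty(B_{1/2})}<\eps_0$ and $\|Du_j-Dh\|_{L^\infty(B_{1/2})}<\eps_0$, contradicting the standing assumption; this proves the lemma. I expect the only real obstacle to be the one flagged at the outset: the convergence $f_j\to 0$ is only in $L^q$, which precludes a direct stability argument for viscosity solutions and forces the detour through the weak formulation of Theorem~\ref{equivalence3.4}, where the uniform gradient bound of Theorem~\ref{thm:main2} is exactly what is needed to make $|Du_j|^{p-2}f_j$ converge to $0$ in $L^q$.
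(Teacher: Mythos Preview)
Your proposal is correct and follows essentially the same approach as the paper: contradiction, uniform $C^{1,\alpha}$ bounds from Theorem~\ref{thm:main2}, Arzel\`a--Ascoli compactness, and passage to the limit in the weak formulation via Theorem~\ref{equivalence3.4}. The only cosmetic difference is that the paper outsources the limit step to Appendix~\ref{unif limit function}, whereas you write it out directly; your explicit H\"older bound $\|Du_j\|_{L^\infty}^{p-2}\|f_j\|_{L^q}\|\varphi\|_{L^{q'}}\to 0$ for the right-hand side is exactly what that appendix does.
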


\begin{proof}
Thriving for contradiction, assume that there exists $\eps_0>0$ such that there are sequences $(u_j)$ and $(f_j)$ satisfying $||u_j||_{L^\infty(B_1)}\leq 1$, $f_j\in C(B_1)\cap L^q(B_1)$, $||f_j||_{L^q(B_1)}\rightarrow 0$, and
\[
-\Delta^N_p u_j=f_j,
\]
but for all $p$-harmonic functions $h$ in $B_{3/4}$
\[
||u_j-h||_{L^\infty(B_{1/2})}> \eps_0\ \ \ \text{or}\ \ \ ||D u_j-D h||_{L^\infty(B_{1/2})}> \eps_0. 
\]
Recall from Theorem \ref{equivalence3.4} that $u_j$ is a weak solution to
\[
-\Delta_p u_j=|D u_j|^{p-2} f_j\qquad\text{in}\quad B_1.
\]
From Theorem \ref{thm:main2} we know that $(u_j)\subset C^{1,\alpha}(B_{3/4})$ for some $\alpha>0$, so by the  Arzel\`a-Ascoli theorem, there is a subsequence, still denoted by $(u_j)$, converging in $C^{1,\alpha}(B_{3/4})$ to a function $h$. By Appendix \ref{unif limit function}, $h$ is a $p$-harmonic function. We have reached a contradiction.
\end{proof}

The next lemma follows from the previous approximation result as in the first subsection.

\begin{lemma}\label{taas}
There exists $\lambda_0=\lambda_0(p,n)\in (0,\frac12)$ and $\delta_0>0$ such that if $||f||_{L^q(B_1)}\leq \delta_0$ and $u\in C^{1,\alpha}_{\text{loc}}(B_1)$ is a viscosity solution to \eqref{normpl} in $B_1$ with $||u||_{L^\infty(B_1)}\leq 1$, then
\[
\sup_{x\in B_{\lambda_0}}|u(x)-u(0)|\leq \lambda^{1+\alpha_\xi}_0+|D u(0)|\lambda_0.
\] 
\end{lemma}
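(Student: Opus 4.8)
The plan is to repeat, almost verbatim, the argument that proves Lemma~\ref{applemma} in the previous subsection, now feeding in the $L^q$ approximation lemma just established in place of the $L^\infty$ one. So I would begin by invoking the preceding approximation lemma: for a given $\eps>0$ it produces $\delta_0=\delta_0(p,n,\eps)$ and a $p$-harmonic function $h$ in $B_{3/4}$ with $\norm{u-h}_{L^\infty(B_{1/2})}<\eps$ and $\norm{D u-D h}_{L^\infty(B_{1/2})}<\eps$, whenever $\norm{f}_{L^q(B_1)}\le\delta_0$ and $\norm{u}_{L^\infty(B_1)}\le 1$.

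Next I would use the a priori $C^{1,\alpha_0}_{\text{loc}}$ estimate for $p$-harmonic functions to pick $\lambda_0=\lambda_0(p,n)\in(0,\tfrac12)$ so small that
\[
\sup_{x\in B_{\lambda_0}}\bigl|h(x)-[h(0)+D h(0)\cdot x]\bigr|\le C\lambda_0^{1+\alpha_0}\le\tfrac12\lambda_0^{1+\alpha_\xi}.
\]
The second inequality is legitimate because $\alpha_\xi=\min(\alpha_0-\xi,1-n/q)\le\alpha_0-\xi<\alpha_0$, so $C\lambda_0^{\alpha_0-\alpha_\xi}\to 0$ as $\lambda_0\to 0$. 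Having fixed $\lambda_0$, I then choose $\eps$ with $\eps<\tfrac16\lambda_0^{1+\alpha_\xi}$, and this $\eps$ in turn fixes $\delta_0$ through the approximation lemma. Note the order of the choices matters: $\lambda_0$ first, then $\eps$, then $\delta_0$.

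Finally, for $x\in B_{\lambda_0}\subset B_{1/2}$ I would split and estimate
\begin{align*}
\bigl|u(x)-[u(0)+D u(0)\cdot x]\bigr| &\le \bigl|h(x)-[h(0)+D h(0)\cdot x]\bigr| \\
&\quad + |(u-h)(x)| + |(u-h)(0)| + |D(u-h)(0)\cdot x| \\
&\le C\lambda_0^{1+\alpha_0} + 3\eps \le \lambda_0^{1+\alpha_\xi},
\end{align*}
and then conclude by the triangle inequality that $\sup_{x\in B_{\lambda_0}}|u(x)-u(0)|\le\lambda_0^{1+\alpha_\xi}+|D u(0)|\lambda_0$, which is the assertion. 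There is no real obstacle here: the proof is a direct transcription of the $q=\infty$ case, the only points needing attention being the ordering of the constants and the use of $\alpha_\xi<\alpha_0$ to absorb the $p$-harmonic oscillation estimate into the target power $\lambda_0^{1+\alpha_\xi}$.
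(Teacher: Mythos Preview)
Your proposal is correct and is exactly the approach the paper takes: the paper simply states that Lemma~\ref{taas} ``follows from the previous approximation result as in the first subsection,'' and you have faithfully transcribed the proof of Lemma~\ref{applemma} with the $L^q$ approximation lemma substituted for the $L^\infty$ one. The ordering of constants ($\lambda_0$, then $\eps$, then $\delta_0$) and the use of $\alpha_\xi<\alpha_0$ that you highlight are precisely the points the original proof relies on.
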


\begin{theorem}\label{Lqregul}
Under the assumptions of the previous lemma, we have
\[
\sup_{x\in B_r}|u(x)-u(0)|\leq C r^{1+\alpha_\xi}\left(1+|D u(0)|r^{-\alpha_\xi}\right)
\]
for all sufficiently small $r>0$.
\end{theorem}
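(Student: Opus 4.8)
\emph{Proof proposal.} The plan is to repeat the proof of Theorem \ref{smallgradient} almost verbatim; the only genuinely new ingredient is a change of variables that controls the $L^q$ norm of the rescaled source terms. For $k\in\N$ I would consider, exactly as there, the rescaled function defined in $B_1$ by
\[
v_k(x)=\frac{u(\lambda_0^k x)-u(0)}{\lambda_0^{k(1+\alpha_\xi)}+\sum_{j=0}^{k-1}|D u(0)|\lambda_0^{k+j\alpha_\xi}}.
\]
Then $v_k(0)=0$, the vector $D v_k(0)$ is an explicit multiple of $D u(0)$, and, since $\Delta^N_p$ is $2$-homogeneous in the second-order term and $0$-homogeneous in the gradient (subtracting the constant $u(0)$ is harmless), $v_k$ is a viscosity solution of $-\Delta^N_p v_k=g_k$ in $B_1$ with
\[
g_k(x)=\frac{\lambda_0^{2k}}{\lambda_0^{k(1+\alpha_\xi)}+\sum_{j=0}^{k-1}|D u(0)|\lambda_0^{k+j\alpha_\xi}}\,f(\lambda_0^k x),
\]
and $g_k\in C(B_1)\cap L^q(B_1)$.

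The new estimate is $\norm{g_k}_{L^q(B_1)}\le\delta_0$. Since the denominator is at least $\lambda_0^{k(1+\alpha_\xi)}$, we have the pointwise bound $|g_k(x)|\le\lambda_0^{k(1-\alpha_\xi)}|f(\lambda_0^k x)|$, so the substitution $y=\lambda_0^k x$ yields
\[
\norm{g_k}_{L^q(B_1)}\le\lambda_0^{k(1-\alpha_\xi)}\,\lambda_0^{-kn/q}\,\norm{f}_{L^q(B_{\lambda_0^k})}\le\lambda_0^{k(1-\alpha_\xi-n/q)}\,\norm{f}_{L^q(B_1)}.
\]
In this subsection $\alpha_\xi=\min(\alpha_0-\xi,1-\tfrac nq)\le 1-\tfrac nq$ and $\lambda_0\in(0,\tfrac12)$, so the exponent $k(1-\alpha_\xi-n/q)$ is nonnegative and $\norm{g_k}_{L^q(B_1)}\le\norm{f}_{L^q(B_1)}\le\delta_0$. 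Hence each $v_k$ meets the structural hypotheses of Lemma \ref{taas} as soon as $\norm{v_k}_{L^\infty(B_1)}\le1$.

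The remainder is identical to Theorem \ref{smallgradient}. I would prove $\norm{v_k}_{L^\infty(B_1)}\le1$ by induction on $k$: the case $k=1$ is Lemma \ref{taas}, and, assuming the bound for indices $\le k$, one applies Lemma \ref{taas} to $v_k$ to get $\sup_{B_{\lambda_0}}|v_k-v_k(0)|\le\lambda_0^{1+\alpha_\xi}+|D v_k(0)|\lambda_0$, then unwinds the normalisation to read off $\norm{v_{k+1}}_{L^\infty(B_1)}\le1$. Summing the resulting oscillation bounds over the dyadic scales $r_k=\lambda_0^{k+1}$ and using $\sum_{j=0}^{k}\lambda_0^{j\alpha_\xi}\le(1-\lambda_0^{\alpha_\xi})^{-1}$ gives
\[
\sup_{x\in B_{\lambda_0^{k+1}}}|u(x)-u(0)|\le C\,\lambda_0^{(k+1)(1+\alpha_\xi)}\bigl(1+|D u(0)|\lambda_0^{-(k+1)\alpha_\xi}\bigr),\qquad C=1+\frac{1}{1-\lambda_0^{\alpha_\xi}},
\]
and comparing an arbitrary small $r$ with the two neighbouring dyadic radii yields the stated estimate for all sufficiently small $r>0$. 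I do not expect any real obstacle here: the only point needing care is the exponent bookkeeping in the second paragraph, which is precisely where the definition $\alpha_\xi\le 1-n/q$ is used; everything else is a line-by-line transcription of the $q=\infty$ argument.
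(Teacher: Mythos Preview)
Your proposal is correct and follows the paper's argument essentially line for line: the same rescaled functions $v_k$, the same pointwise bound $|g_k(x)|\le\lambda_0^{k(1-\alpha_\xi)}|f(\lambda_0^k x)|$, the same change of variables to control $\norm{g_k}_{L^q(B_1)}$, and then a verbatim appeal to the induction in Theorem~\ref{smallgradient}. Your exponent bookkeeping is in fact slightly more careful than the paper's, which writes $q(1-\alpha_\xi)-n>0$ whereas equality can occur when $\alpha_\xi=1-n/q$; your observation that nonnegativity suffices is the correct statement.
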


\begin{proof}
The proof is similar to the proof of Theorem \ref{smallgradient}. Again we consider the rescaled function
\[
v_k(x)=\frac{u(\lambda^k_0 x)-u(0)}{\lambda^{k(1+\alpha_\xi)}_0+\sum^{k-1}_{j=0}|D u(0)|\lambda^{k+j\alpha_\xi}_0},
\]
and see that $v_k(0)=0$,
\[
D v_k(0)=\frac{\lambda^k_0}{\lambda^{k(1+\alpha_\xi)}_0+\sum^{k-1}_{j=0}|D u(0)|\lambda^{k+j\alpha_\xi}_0}D u(0),
\]
and
$$-\Delta^N_pv_k(x)=
\frac{\lambda_0^{2k}}{\lambda^{k(1+\alpha_\xi)}_0+\sum^{k-1}_{j=0}|D u(0)\lambda^{k+j\alpha_\xi}_0} f(\lambda^k_0 x)=:f_{k}(x).
$$
Since $q(1-\alpha_\xi)-n>0$, we estimate
\begin{align*}
\int_{B_1}|f_{k}(x)|^qdx &\leq\int_{B_1}\left(\lambda^{k(1-\alpha_\xi)}_0 |f(\lambda^k_0 x)|\right)^q dx\\
& =\int_{B_{\lambda^k_0}}\left(\lambda^{k(1-\alpha_\xi)}_0 |f(y|)\right)^q \lambda^{-nk}_0 dy\\
& = \int_{B_{\lambda^k_0}}\lambda^{kq(1-\alpha_\xi)-nk}_0 |f(y)|^q dy\\
& \leq \int_{B_{\lambda^k_0}}|f(y)|^q dy.
\end{align*}
Hence, we have $||f_k||_{L^q (B_1)}\leq \delta_0$. By continuing as in the proof of Theorem \ref{smallgradient}, we get the result.
\end{proof}

\begin{theorem}
Under the assumptions of Lemma \ref{taas}, we have
\[
\sup_{x\in B_r}\left|u(x)-[u(0)+D u(0)\cdot x] \right|\leq C r^{1+\alpha_\xi}
\]
for all sufficiently small $r\in (0,1)$.
\end{theorem}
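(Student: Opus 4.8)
The plan is to imitate the proof of Theorem~\ref{optimalthm} line for line, splitting into the two regimes $|Du(0)|\le r^{\alpha_\xi}$ and $|Du(0)|> r^{\alpha_\xi}$, with Theorem~\ref{smallgradient} and Lemma~\ref{applemma} there replaced by Theorem~\ref{Lqregul} and Lemma~\ref{taas} here. In the small-gradient regime I would simply invoke Theorem~\ref{Lqregul}: it gives $\sup_{B_r}|u(x)-u(0)|\le Cr^{1+\alpha_\xi}(1+|Du(0)|r^{-\alpha_\xi})\le Cr^{1+\alpha_\xi}$, and adding $|Du(0)|r\le r^{1+\alpha_\xi}$ via the triangle inequality closes this case.

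For the large-gradient regime, set $\mu:=\min\!\big(\tfrac34,|Du(0)|^{1/\alpha_\xi}\big)$ and pass to the rescaled function $w(x)=\mu^{-(1+\alpha_\xi)}\big(u(\mu x)-u(0)\big)$, so that $w(0)=0$, $|Dw(0)|\ge 1$, and $-\Delta^N_p w(x)=\mu^{1-\alpha_\xi}f(\mu x)$. Since $\alpha_\xi\le 1-n/q$ forces $q(1-\alpha_\xi)\ge n$, the change of variables used in the proof of Theorem~\ref{Lqregul} shows $\|\mu^{1-\alpha_\xi}f(\mu\,\cdot)\|_{L^q(B_1)}\le\|f\|_{L^q(B_1)}\le\delta_0$, so $w$ satisfies the hypotheses of Theorem~\ref{Lqregul}. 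Because $|Du(0)|\le C(p,n)\mu^{\alpha_\xi}$ (either $\mu^{\alpha_\xi}=|Du(0)|$, or $\mu=\tfrac34$ and $|Du(0)|$ is bounded a priori by the gradient estimate of Theorem~\ref{thm:main2}), Theorem~\ref{Lqregul} yields $\sup_{B_1}|w|\le C(1+|Du(0)|\mu^{-\alpha_\xi})\le C$. Since $u\in C^{1,\alpha}_{\text{loc}}$ by Theorem~\ref{thm:main2}, continuity of $Dw$ together with $|Dw(0)|\ge1$ produces $\gamma\in(0,\tfrac12)$ with $|Dw|\ge\tfrac12$ on $B_\gamma$; on that ball the equation for $w$ is uniformly elliptic and, by the equivalence result of \cite{julin2012new}, $w$ is a weak solution of $-\Delta_p w=|Dw|^{p-2}\mu^{1-\alpha_\xi}f(\mu x)=:g$ with $g\in C(B_\gamma)\cap L^q(B_\gamma)$. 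As $|Dw|$ is bounded above and below, one may write the equation pointwise as $-\operatorname{tr}(A(x)D^2w)=g$ a.e.\ with $A$ uniformly elliptic and continuous (its Hölder continuity coming from $Dw\in C^{0,\alpha}$), first obtaining $w\in W^{2,2}_{\text{loc}}(B_\gamma)$ and then, by the interior $W^{2,q}$ estimate for equations with continuous coefficients (local version of \cite[Lemma~9.16]{gilbargt01}; see also \cite[Theorem~5.2]{LU68} for the first step), $w\in W^{2,q}_{\text{loc}}(B_\gamma)$, hence $w\in C^{1,1-n/q}(B_\gamma)\subset C^{1,\alpha_\xi}(B_\gamma)$ with controlled norm.

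From $w\in C^{1,\alpha_\xi}(B_\gamma)$ I get $\sup_{B_s}|w(x)-Dw(0)\cdot x|\le Cs^{1+\alpha_\xi}$ for $s\in(0,\gamma/2)$, which unravels to $\sup_{B_s}|u(\mu x)-[u(0)+Du(0)\cdot(\mu x)]|\le C(\mu s)^{1+\alpha_\xi}$. For $r<\mu\gamma/2$ this is the desired estimate, and for $r\ge\mu\gamma/2$ one uses $r<\mu$ and $|Du(0)|\le C\mu^{\alpha_\xi}$ to bound $\sup_{B_r}|u(x)-[u(0)+Du(0)\cdot x]|\le\sup_{B_\mu}|u(x)-u(0)|+|Du(0)|\mu\le C\mu^{1+\alpha_\xi}\le C(2/\gamma)^{1+\alpha_\xi}r^{1+\alpha_\xi}$. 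I expect the only genuine subtlety — and the structural reason the target exponent is $\alpha_\xi=\min(\alpha_0-\xi,1-n/q)$ rather than $\alpha_0-\xi$ — to be this bootstrap step: away from the critical set the equation is uniformly elliptic but its right-hand side lies only in $L^q$, so linear $L^q$ theory cannot push the gradient past $C^{0,1-n/q}$, which is precisely the cap $1-n/q$. Everything else is a transcription of the $q=\infty$ argument.
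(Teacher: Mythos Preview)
Your proposal is correct and follows essentially the same route as the paper's proof: the same two-regime split, the same rescaling $w(x)=\mu^{-(1+\alpha_\xi)}(u(\mu x)-u(0))$, the same use of Theorem~\ref{Lqregul} for the $L^\infty$ bound on $w$, and the same bootstrap to $C^{1,1-n/q}$ on the ball where $|Dw|\ge\tfrac12$. You have actually spelled out the $W^{2,q}$ step more explicitly than the paper, which just writes ``As explained in the proof of Theorem~\ref{optimalthm}, we know that $w\in C^{1,1-n/q}(B_\gamma)$''; your observation that the cap $1-n/q$ arises precisely from this linear $L^q$ estimate is the point.
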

\begin{proof}
We follow the ideas of the proof of Theorem \ref{optimalthm}. We get the result from Theorem \ref{Lqregul} when $|D u(0)|\leq r^{\alpha_\xi}$. In the case $|D u(0)|> r^{\alpha_\xi}$, define the rescaled function
$w(x)=(u(\mu x)-u(0))/\mu^{1+\alpha_\xi})$, for which $w(0)=0$, $|D w(0)|\geq 1$, and
\[
-\Delta^N_p w(x)=\frac{\mu^2 f(\mu x)}{\mu^{1+\alpha_\xi}}=\mu^{1-\alpha_\xi}f(\mu x)=:f_\mu(x),
\]
where $||f_\mu||_{L^q(B_1)}\leq \delta_0$. From Theorem \ref{Lqregul} we get
\[
\sup_{x\in B_1} |w(x)|=\sup_{x\in B_\mu}\frac{|u(x)-u(0)|}{\mu^{1+\alpha_\xi}}\leq C\left(1+|D u(0)|\mu^{-\alpha_\xi}\right)=C.
\]
Since $u\in C^{1,\alpha}_{\text{loc}}(B_1)$ for some $\alpha>0$, there exists $\gamma\in(0,1/2)$ such that
\[
|D w(x)|\geq \frac12\qquad \text{in}\ B_\gamma.
\]
As explained in the proof of Theorem \ref{optimalthm}, we know that  $w\in C^{1,1-n/q}(B_\gamma)$. Since $\alpha_\xi\leq 1-n/q$, we have $w\in C^{1,\alpha_\xi}(B_\gamma)$. Hence, for all $s\in(0,\frac \gamma2)$, we have
\[
\sup_{x\in B_{s}}\left|w(x)-D w(0)\cdot x\right|\leq C s^{1+\alpha_\xi},
\]
and the rest of the argument follows as in the proof of Theorem \ref{optimalthm}.
\end{proof}

The proof of Theorem \ref{thm:main3} is complete.

\appendix

\section{The limit equation in Lemma \ref{flatle}}
\noindent We prove two convergence results needed in the proof of Lemma \ref{flatle}. Assume that there exist a sequence of continuous functions $(f_j)$ with $||f_j||_{L^\infty(B_1)} \rightarrow 0$, a sequence of vectors $(q_j)$ and a sequence of viscosity solutions $(w_j)$ with $\osc_{B_1}{w_j}\leq 1$ to
\[
-\Delta w_j-(p-2) \left\langle D^2w_j\frac{D w_j+q_j}{\abs{D w_j+q_j}}, \frac{D w_j+q_j}{\abs{D w_j+q_j}}\right\rangle=f_j.
\]

\subsection*{Case 1: $(q_j)$ is bounded}\label{appendixa1}
First we show that if $(q_j)$ is bounded, there is a subsequence $(w_j)$ converging to a limit $w_{\infty}$, which satisfies
\begin{equation}\label{normaali}
-\tr\left(\left(I+(p-2)\dfrac{Dw_\infty+q_\infty}{|Dw_\infty+q_\infty|}\otimes\dfrac{Dw_\infty+q_\infty}{|Dw_\infty+q_\infty|}\right )D^2 w_\infty\right) =0   \quad\text{in}\, B_1
\end{equation}
in a viscosity sense. Here $q_j\rightarrow q_\infty$ up to the same subsequence.
We show that $w_{\infty}$ is a subsolution of \eqref{normaali} (the case of supersolution being similar).
We  fix $\phi\in C^2(\Omega)$ such that $w_{\infty}-\phi$  has a strict maximum at $x_0$. As $w_{\infty}$ is the uniform limit of the subsequence $(w_j)$ and $x_0$ is a strict maximum point, there
exists a sequence of points $x_j\rightarrow x_0$ such that $(w_j-\phi)$ has a local maximum at $x_j$.

Suppose first that $-D\phi(x_0)\neq q_\infty$. Then $-D\phi(x_j)\neq q_j$ when $j$ is large, and at those points we have
\[
-\Delta \phi_j-(p-2) \left\langle D^2\phi_j\frac{D \phi_j+q_j}{\abs{D \phi_j+q_j}}, \frac{D \phi_j+q_j}{\abs{D \phi_j+q_j}}\right\rangle\leq f_j.
\]
Passing to the limit, we get the desired result.

Suppose next that $-D\phi(x_0)=q_\infty$. We have to consider two cases. Assuming
that there exists a subsequence still indexed by $j$ such that $|D\phi(x_j)+q_j| > 0$ for all $j$ in the
subsequence, then
\[
-\Delta \phi_j-(p-2) \left\langle D^2\phi_j\frac{D \phi_j+q_j}{\abs{D \phi_j+q_j}}, \frac{D \phi_j+q_j}{\abs{D \phi_j+q_j}}\right\rangle\leq f_j,
\]
and we conclude by passing to the limit. If such a subsequence does not exist, then we have
$$ -\Delta \phi(x_j)-(p-2)\lambda_{max}(D^2\phi(x_j))\leq f_j(x_j)$$
for $j$ large enough.
Passing to the limit we get
$$-\Delta \phi(x_0)-(p-2)\lambda_{max}(D^2\phi(x_0))\leq 0.$$
We have shown the desired result.

\subsection*{Case 2: $(q_j)$ is unbounded}\label{appendixa2}
When $(q_j)$ is unbounded, take a subsequence, still denoted by $(q_j)$, for which $|q_j|\rightarrow \infty$, and then a converging subsequence from $e_j=\frac{q_j}{|q_j|}$, $e_j\rightarrow e_\infty$. We have 
\begin{equation*}-\Delta w_j-(p-2) \left\langle  D^2w_j\frac{D w_j|q_j|^{-1}+e_j}{\abs{D w_j|q_j|^{-1}+e_j}}, \frac{D w_j|q_j|^{-1}+e_j}{\abs{D w_j|q_j|^{-1}+e_j}}\right\rangle=f_j.
\end{equation*}
We show that the uniform limit $w_\infty$ (up to a subsequence) satisfies in the viscosity sense 
\begin{equation}\label{app2}
-\Delta w_{\infty}-(p-2) \left\langle D^2w_{\infty}\,e_{\infty}, e_{\infty}\right\rangle=0\qquad\text{in}\quad B_1,
\end{equation}
with $|e_{\infty}|=1$.

We only show that $w_{\infty}$ is a subsolution of \eqref{app2} (the case of supersolution is similar).
We  fix $\phi\in C^2(\Omega)$ such that $w_{\infty}-\phi$  has a strict maximum at $x_0$. By the uniform convergence of $w_j$ to $w_{\infty}$, there are points $x_j$ such that 
$w_j-\phi$ has a maximum at $x_j$ and $x_j\to x_0$. Since $D\phi (x_j)\to D \phi(x_0)$ and $|q_j|\to \infty$, we know that 
\[
\dfrac{D\phi(x_j)}{|q_j|}\neq -e_j 
\]
for $j$ large. Denoting $A_j:=D \phi(x_j)|q_j|^{-1}$ for short, we get at those points
$$-\Delta \phi(x_j)-(p-2) \left\langle  D^2\phi(x_j)\frac{A_j+e_j}{\abs{A_j+e_j}}, \frac{A_j+e_j}{\abs{A_j+e_j}}\right\rangle\leq f_j(x_j).$$
Since $A_j\rightarrow 0$, we get the desired result.\\

\section{Convergence in the weak formulation}\label{unif limit function}
Assume that $p>2$, $q>\max(2,n, p/2)$, $f_\eps,f\in C(\Om)\cap L^q(\Om)$ and $f_\eps \rightarrow f$ in $L^q(\Om)$. We show that if $u_\eps$ is a weak solution to
\[
-\Delta_p u_\eps=|Du_\eps|^{p-2}f_\eps,
\] 
and if $u_\eps\rightarrow u$ in $C^{1,\a}(K)$ for any $K\subset \subset \Om$, then $u$ is a weak solution to
\[
-\Delta_p u=|Du|^{p-2}f.
\]
For any test function $\phi\in C^\infty_0 (\Omega)$, $u_\eps$ satisfies
\[
\int_\Om |D u_\eps|^{p-2}D u_\eps\cdot D \phi\,dx=\int_\Om |D u_\eps|^{p-2}f_\eps \phi\,dx.
\]
Since $D u_\eps\rightarrow D u$ locally uniformly, we have for all sufficiently small $\eps$, 
\[
|D u_\eps|^{p-2}|D u_\eps\cdot D \phi|\leq (||D u||_{L^\infty(\text{supp}\, \phi)}+1)^{p-1}|D \phi|\in L^1(\Omega),
\]
so by the dominated convergence theorem,
\[
\int_\Om |D u_\eps|^{p-2}D u_\eps\cdot D \phi\,dx\rightarrow \int_\Om |D u|^{p-2}D u\cdot D \phi\,dx.
\]
It remains to show that
\begin{equation}\label{right-handside}
\int_\Om |D u_\eps|^{p-2} f_\eps \phi\,dx\rightarrow \int_\Om |D u|^{p-2} f \phi\,dx.
\end{equation}
Notice that
\begin{equation}\label{identity}
|D u_\eps|^{p-2} f_\eps \phi=|D u_\eps|^{p-2}(f_\eps -f)\phi+|D u_\eps|^{p-2}f\phi.
\end{equation}
Since $D u_\eps\in L^\infty_{\text{loc}}(\Omega)$, by the dominated convergence and identity \eqref{identity}, \eqref{right-handside} holds.

\section{Convergence in the viscosity sense}\label{viscconvergence}

Assume that $h_\eps\in C(\Om)$ and  let $v_{\eps}$ be a viscosity solution to 
\begin{equation}
-\Delta v_{\eps}-(p-2) \frac{D^2v_{\eps} Dv_{\eps}\cdot Dv_{\eps}}{| D v_{\eps}|^2+ \eps^2} +\lambda v_\eps=h_\eps \quad\text{in}\,\,\Om',
\end{equation}
and assume that $v_\eps\to v$ locally uniformly in $\Om'$ and $h_\eps\to h$ locally uniformly. We prove that  the limit $v$ is  a viscosity solution of \eqref{eigennormpl}. Viscosity solutions to \eqref{eigennormpl} are understood in the following sense
\begin{definition}\label{defweak}
Let $\Om'$ be a bounded domain and $2<p<\infty$. An upper semicontinuous function $v$ is  a viscosity subsolution of \eqref{eigennormpl}
if, for all $x_0\in\Om'$  and $\phi\in C^2(\Om')$ such that $v-\phi$ attains a local maximum at $x_0$ and $v(x_0)=\phi(x_0)$, one has either
\begin{equation*}
-\Delta_p^N \phi(x_0)+\lambda v(x_0)\leq h(x_0) \qquad\qquad\qquad\qquad\qquad\text{if}\,\, D \phi(x_0)\neq 0,
\end{equation*}
or  there exists a vector $\eta\in\R^n$ with $|\eta|\leq 1$ such that
\begin{equation*}
-\Delta\phi(x_0)-(p-2)\langle D^2\phi(x_0) \eta, \eta\rangle+\lambda v(x_0)\leq h(x_0)\quad\text{if}\,\, D\phi(x_0)=0.
\end{equation*}
\end{definition}
\noindent The notion of viscosity supersolution is defined similarly and a function $v$ is a viscosity solution to \eqref{eigennormpl} if and only if it is a sub- and supersolution.

 We only show that $v$ is a viscosity subsolution to \eqref{eigennormpl}. To show that $v$ is a viscosity super-solution, one proceeds similarly.
Let $\phi\in C^2$ be such that $v-\phi$ has a local strict maximum at $x_0$ and $v(x_0)=\phi(x_0)$. Since $v_{\eps}\to v$ locally uniformly, there exists a sequence $x_{\eps}\to x_0$ such that $v_{\eps}-\phi$ has a local maximum at $x_{\eps}$. Since $v_{\eps}$ is a viscosity solution of \eqref{perti}, it follows that
\begin{equation}\label{paer}
-\Delta \phi(x_{\eps})-(p-2) \frac{D^2\phi(x_{\eps}) D\phi(x_{\eps})\cdot D\phi(x_{\eps})}{| D \phi(x_{\eps})|^2+ \eps^2}+\lambda v_\eps(x_\eps) \leq h_\eps(x_\eps).
\end{equation}
First suppose that $D\phi(x_0)\neq 0$, then $D\phi(x_{\eps})\neq 0$ for $\eps$ small enough. Since $h_\eps$ converges to $h$ locally uniformly and $v_\eps$ converges to $v$ locally uniformly, passing to the limit in \eqref{paer}, we get that
\begin{equation*}
-\Delta\phi(x_0)-(p-2) \frac{D^2\phi(x_{0}) D\phi(x_{0})\cdot D\phi(x_{0})}{| D \phi(x_{0})|^2}+\lambda v(x_0)\leq h(x_0).
\end{equation*}
Next suppose that $D\phi(x_0)=0$. Noticing that  $\left|\dfrac{D \phi(x_\eps)}{\sqrt{| D \phi(x_{\eps})|^2+ \eps^2}}\right|\leq 1$, it follows that (up to a subsequence) the sequence $\dfrac{D \phi(x_\eps)}{\sqrt{| D \phi(x_{\eps})|^2+ \eps^2}}$ converges to a vector $\eta\in \R^n$ with $|\eta|\leq 1$. Passing to the limit in \eqref{paer}, we get that, there exists a vector $\eta$ such that
\begin{eqnarray*}
-\Delta\phi(x_0)-(p-2) \langle D^2\phi(x_{0})\eta, \eta\rangle +\lambda v(x_0)\leq h(x_0).
\end{eqnarray*}

\section{Uniqueness of  viscosity solutions to \eqref{eigennormpl}}\label{unieigen}

In this section  we prove the uniqueness of viscosity solutions to \eqref{eigennormpl}, where viscosity solutions of \eqref{eigennormpl} are understood in the sense of Definition \ref{defweak} and $\lambda >0$. Notice that, for $\lambda>0$,  the operator 
$$F(X,\xi,r,x):=-\tr(A(\xi) X)+\lambda r-h(x)$$
where $$
A(\xi):=
\begin{cases}
I+(p-2)\ol \xi\otimes \ol \xi &\text{if}\quad \xi\neq 0\\
I+(p-2)\eta\otimes \eta & \text{for a certain}\, \,\eta, |\eta|\leq 1\quad\text{if}\,\, \xi=0
\end{cases}$$
with $\ol \xi :=\dfrac{\xi}{|\xi|}$
is proper, that is
$$F(X,\xi,s,x)\leq F(Y,\xi,r,x)\quad\text{for}\quad Y\leq X,\quad s\leq r.$$
 Now, let $v_1$ and $v_2$ be two continuous viscosity solutions to \eqref{eigennormpl} in $\Om'$  and such that $v_1=v_2$ on $\partial\Om'$. 	We want to show that $v_1=v_2$. We argue by contradiction.  Without loss of generality, we assume that $v_1-v_2$ reaches a positive maximum at an interior point $x_0 \in\Om'$. For $\eps>0$, the function
 $$\Phi(x,y):=v_1(x)-v_2(y)-\dfrac{|x-y|^4}{4\eps},$$
 reaches a maximum in $\overline\Om'\times\overline\Om'$ at $(x_\eps, y_\eps)$. By classical arguments we have that $x_\eps\in \Om', y_\eps\in \Om'$ for $\eps>0$ small enough and $x_\eps\to x_0, y_\eps\to x_0$. We also observe  that the function $x\mapsto v_1(x)-\left(v_2(y_\eps)+\dfrac{|x -y_\eps|^4}{4\eps}\right)=v_1(x)-\phi_1(x)$ reaches a maximum at $x_\eps$ and $y\mapsto  v_2(y)-\left(v_1(x_\eps)-\dfrac{|x_\eps-y|^4}{4\eps}\right)=v_2(y)-\phi_2(y)$ reaches a minimum at $y_\eps$. From the definition of viscosity sub- and supersolution we obtain the following. If $x_\eps=y_\eps$ then $D^2\phi_1(x_\eps)=D^2\phi_2(y_\eps)=0$  and writing the viscosity inequalities we get that 
 $$\lambda v_1(x_\eps)\leq h(x_\eps),\qquad \lambda v_2(x_\eps)\geq h(x_\eps).$$
  It follows that $\lambda(v_1(x_\eps)-v_2(x_\eps))\leq 0$ and passing to the limit we get that 
 $\lambda(v_1(x_0)-v_2(x_0))\leq 0$, which is a contradiction
 since $\lambda>0$ and $v_1(x_0)-v_2(x_0)>0$.\\
If $x_\eps\neq y_\eps$, then  by the theorem of sums \cite[Theorem 3.2]{crandall1992user}
there are 
$$(\xi_x, X)\in \overline{\mathcal{J}}^{2,+}(v_1(x_\eps)), \quad (\xi_y, Y)\in \overline{\mathcal{J}}^{2,-}(v_2(y_\eps))$$ 
with $X\leq Y$ and $\xi_x=\xi_y=D\phi_1(x_\eps)=D\phi_2(y_\eps)\neq 0$.
Writing the viscosity inequalities, we have
\begin{align*}
-\tr (A(\xi_x) X)+\lambda v_1(x_\eps)\leq h(x_\eps)\\
-\tr (A(\xi_x) Y)+\lambda v_2(y_\eps)\geq h(y_\eps).
\end{align*}
Since $A(\xi_x)=I+(p-2)\ol \xi_x\otimes \ol\xi_x\geq 0$ and $X-Y\leq 0$, subtracting the previous two inequalities, we get that
$$\lambda(v_1(x_\eps)-v_2(y_\eps))\leq h(x_\eps)-h(y_\eps)$$ and passing to the limit  we get a contradiction.


\begin{thebibliography}{CDPDCM13}

\bibitem[ATU16]{aruijotu}
D~Ara\'ujo, E~Teixeira, and J.~M Urbano.
\newblock Towards the ${C}^{p'}$-regularity conjecture.
\newblock {\em preprint at \url{http://www.mat.uc.pt/preprints/ps/p1615.pdf}},
  2016.

\bibitem[AZ16]{arauijoz}
D.~Ara\'ujo and L.~Zhang.
\newblock Interior ${C}^{1,\a}$ estimates for $p$-{L}aplacian equations with
  optimal regularity.
\newblock {\em arXiv preprint \url{http://arxiv.org/abs/1507.06898}}, 2015.
\bibitem[AS12]{discut1}
S. Armstrong and C. Smart,
 \newblock A finite difference approach to the infinity Laplace equation and tug-of-war
games. 
\newblock{\em Trans. Am. Math. Soc.},
364(2), 595--636, 2012.

\bibitem[BG15]{banerjee2015dirichlet}
A.~Banerjee and N.~Garofalo.
\newblock On the {D}irichlet boundary value problem for the normalized
  $p$-{L}aplacian evolution.
\newblock {\em CPAA}, 14(1):1--21, 2015.

\bibitem[BD10]{birdemen2}
I. Birindelli and F. Demengel,
\newblock Regularity and uniqueness of the first eigenfunction for singular fully nonlinear operators,
  \newblock{\em J. Differential Equations}, 249(5), 1089--1110, 2010.
  \bibitem[BD12]{birdemen1}
I. Birindelli and F. Demengel,
   \newblock Regularity for radial solutions of degenerate fully nonlinear equations,
\newblock{\em Nonlinear Anal}, 75(17):6237--6249, 2012.
\bibitem[Caf89]{caffarelli89}
L.A. Caffarelli.
\newblock Interior a priori estimates for solutions of fully nonlinear
  equations.
\newblock {\em Ann. of Math.}, 130(2):189--213, 1989.

\bibitem[CC95]{caffarellicabrebook}
L.A. Caffarelli and X.~Cabr{\'e}.
\newblock {\em Fully nonlinear elliptic equations}, volume~43 of {\em American
  Mathematical Society Colloquium Publications}.
\newblock American Mathematical Society, Providence, RI, 1995.

\bibitem[CPCM13]{charroddm13}
F.~Charro, G.~De~Philippis, A.~Di~Castro, and D.~M\'aximo.
\newblock On the {A}leksandrov-{B}akelman-{P}ucci estimate for the infinity
  {L}aplacian.
\newblock {\em Calc. Var. Partial Differential Equations}, 48(3-4):667--693,
  2013.

\bibitem[CIL92]{crandall1992user}
M.G. Crandall, H.~Ishii, and P-L Lions.
\newblock User's guide to viscosity solutions of second order partial
  differential equations.
\newblock {\em Bull. Am. Math. Soc.}, 27(1):1--67, 1992.

\bibitem[CF16]{crastaf16}
G. Crasta and I. Fragal\`a.
\newblock A $C^1$ regularity result for the inhomogeneous normalized infinity {L}aplacian.
\newblock {\em Proc. Amer. Math. Soc.}, to appear.

\bibitem[DeG57]{Degio57}
E. De~Giorgi.
\newblock  Sulla differenziabilit\'a e l'analiticit\'a delle estremali degli integrali
multipli regolari.
\newblock \emph{Mem. Accad. Sci. Torino. Cl. Sci. Fis. Mat. Nat. (3)}, 3:25--43, 1957.

\bibitem[DiB83]{Dib83}
E.~DiBenedetto.
\newblock ${C}^{1,\alpha}$ local regularity of weak solutions of degenerate
  elliptic equations.
\newblock {\em Nonlinear Anal.}, 7(8):827--850, 1983.

\bibitem[Doe11]{does11}
K.~Does.
\newblock An evolution equation involving the normalized $p$-{L}aplacian.
\newblock {\em C{P}{A}{A}}, 10(1):361--396, 2011.

\bibitem[DM10]{DM2010}
F.~Duzaar and G.~Mingione.
\newblock Local {L}ipschitz regularity for degenerate elliptic systems.
\newblock {\em Ann. Inst. H. Poincar\'e Anal. Non Lin\'eaire},
  27(6):1361--1396, 2010.

\bibitem[ETT15]{MR3416134}
A.~Elmoataz, M.~Toutain, and D.~Tenbrinck.
\newblock On the {$p$}-{L}aplacian and {$\infty$}-{L}aplacian on graphs with
  applications in image and data processing.
\newblock {\em SIAM J. Imaging Sci.}, 8(4):2412--2451, 2015.

\bibitem[GT01]{gilbargt01}
D.~Gilbarg and N.S. Trudinger.
\newblock {\em Elliptic partial differential equations of second order}.
\newblock Classics in Mathematics. Springer-Verlag, Berlin, 2001.
  
\bibitem[IS13]{silimb}
C.~Imbert and L.~Silvestre.
\newblock $C^{1,\alpha}$ regularity of solutions of some degenerate fully non-linear elliptic
  equations.
\newblock {\em Adv. Math.}, 233(1):196 -- 206, 2013.

\bibitem[IL90]{ishiilions}
H. Ishii and P-L Lions.
\newblock Viscosity solutions of fully nonlinear second-order elliptic partial
differential equations.
\newblock {\em J. Differential equations}, 83(1):26--78, 1990.

\bibitem[IM89]{iwaniecm89}
T. Iwaniec and J. Manfredi.
\newblock Regularity of $p$-harmonic functions on the plane.
\newblock {\em Rev. Mat. Iberoamericana} 5:1--19, 1989.

\bibitem[JS16]{jins15}
T.~Jin and L.~Silvestre.
\newblock H\"older gradient estimates for parabolic homogeneous p-{L}aplacian
equations.
\newblock {\em J. Math. Pures. Appl.}, to appear.

\bibitem[JJ12]{julin2012new}
V.~Julin and P.~Juutinen.
\newblock A new proof for the equivalence of weak and viscosity solutions for
  the $p$-laplace equation.
\newblock {\em Comm. Partial Differential Equations}, 37(5):934--946, 2012.

\bibitem[JLM01]{juutinenlm01}
P.~Juutinen, P.~Lindqvist, and J.~J. Manfredi.
\newblock On the equivalence of viscosity solutions and weak solutions for a
  quasi-linear equation.
\newblock {\em SIAM J. Math. Anal.}, 33(3):699--717, 2001.

\bibitem[KMP12]{kamamik2012}
B.~Kawohl, J.~Manfredi, and M~Parviainen.
\newblock Solutions of nonlinear {P}{D}{E}s in the sense of averages.
\newblock {\em J. Math. Pures. Appl.}, 97(2):173--188, 2012.
		\bibitem[Kry96]{krylovholder}
		N, V. Krylov, 
		\newblock{\em Lectures on elliptic and parabolic equations in H\"older spaces}
		\newblock  (No. 12). American Mathematical Soc, 1996.

\bibitem[KM12]{kuusim12}
T. Kuusi and G. Mingione.
\newblock Universal potential estimates.
\newblock {\em J. Funct. Anal.}, 262(10):4205--4269, 2012.

\bibitem[KM14]{kuusi2014guide}
T. Kuusi and G. Mingione.
\newblock Guide to nonlinear potential estimates.
\newblock {\em Bull. Math. Sci.}, 4(1):1--82, 2014.

\bibitem[LU68]{LU68}
O.A. Ladyzhenskaya and N.N. Uraltseva.
\newblock {\em Linear and quasilinear elliptic equations}.
\newblock Translated from the Russian by Scripta Technica, Inc. Translation
  editor: Leon Ehrenpreis. Academic Press, New York-London, 1968.

\bibitem[Lew83]{lewis83}
J.L. Lewis.
\newblock Regularity of the derivatives of solutions to certain degenerate
  elliptic equations.
\newblock {\em Indiana Univ. Math. J}, 32(6):849--858, 1983.

\bibitem[Lie93]{Lie93}
G.~M. Lieberman.
\newblock Sharp forms of estimates for subsolutions and supersolutions of
  quasilinear elliptic equations involving measures.
\newblock {\em Comm. Partial Differential Equations}, 18(7-8):1191--1212, 1993.

\bibitem[LL13]{lindgren2013regularity}
E.~Lindgren and P.~Lindqvist.
\newblock Regularity of the $p$-Poisson equation in the plane.
\newblock {\em J.\ Anal.\ Math.}, to appear.

\bibitem[LPS13]{luirops13}
H.~Luiro, M.~Parviainen, and E.~Saksman.
\newblock Harnack's inequality for $p$-harmonic functions via stochastic games.
\newblock {\em Comm.\ Partial Differential Equations}, 38(11):1985--2003, 2013.



\bibitem[MZ97]{malyzim}
J.~Maly and W.P. Ziemer.
\newblock {\em Fine regularity of solutions of elliptic partial differential
	equations}.
\newblock Mathematical surveys and monographs. American Mathematical Society,
1997.
\bibitem[Man86]{manfredi86}
J.J. Manfredi.
\newblock {\em Regularity of the gradient for a class of nonlinear possibly
  degenerate elliptic equations}.
\newblock Ph.D. thesis. Washington University, Saint Louis, 1986.

\bibitem[MPR10]{manfredipr10c}
J.J. Manfredi, M.~Parviainen, and J.D. Rossi.
\newblock An asymptotic mean value characterization for a class of nonlinear
  parabolic equations related to tug-of-war games.
\newblock {\em SIAM J.\ Math.\ Anal.}, 42(5):2058--2081, 2010.



\bibitem[Mos60]{moser60new}
J. Moser.
\newblock A new proof of De Giorgi's theorem concerning the regularity problem
for elliptic differential equations.
\newblock {\em Comm. Pure Appl. Math.}, 13(3):457--468,
1960.

\bibitem[PS08]{peress08}
Y.~Peres and S.~Sheffield.
\newblock Tug-of-war with noise: a game-theoretic view of the
  {$p$}-{L}aplacian.
\newblock {\em Duke Math. J.}, 145(1):91--120, 2008.

\bibitem[PSSW09]{peresssw09}
Y.~Peres, O.~Schramm, S.~Sheffield, and D.~B. Wilson.
\newblock Tug-of-war and the infinity {L}aplacian.
\newblock {\em J. Amer. Math. Soc.}, 22(1):167--210, 2009.

\bibitem[Ruo16]{ruosteenoja16}
E.~Ruosteenoja.
\newblock Local regularity results for value functions of tug-of-war with noise
  and running payoff.
\newblock {\em Adv. Calc. Var.}, 9(1):1--17, 2016.

\bibitem[Theo75]{theo}
M.C.~Theobald.
\newblock An inequality for the trace of the product of two symmetric matrices.
\newblock {\em Mathematical Proceedings of the Cambridge Philosophical Society}, 77(2):265--267, 1975.

\bibitem[Tol84]{Tolk84}
P.~Tolksdorf.
\newblock Regularity for a more general class of quasilinear elliptic
  equations.
\newblock {\em J. Differential Equations}, 51(1):126--150, 1984.

\bibitem[Uhl77]{uhlenbeck77}
K.~Uhlenbeck.
\newblock Regularity for a class of non-linear elliptic systems.
\newblock {\em Acta Math.}, 138(1):219--240, 1977.

\bibitem[Ura68]{uraltseva68}
N.N. Uraltseva.
\newblock Degenerate quasilinear elliptic systems.
\newblock {\em Zap. Na. Sem. Leningrad. Otdel. Mat. Inst. Steklov.(LOMI)},
  7:184--222, 1968.

\end{thebibliography}
\end{document}